\numberwithin{equation}{section}  
\newtheorem{theorem}{Theorem}
\newtheorem{lemma}{Lemma}
\newtheorem{remark}{Remark}
\newcommand{\bx}{\bm{x}}
\newcommand{\bu}{\bm{u}}
\newcommand{\bv}{\bm{v}}
\newcommand{\bE}{\bm{E}}
\newcommand{\bB}{\bm{B}}
\newcommand{\bq}{\bm{q}}
\newcommand{\bJ}{\bm{J}}
\newcommand{\mT}{\mathcal{T}}
\newcommand{\mP}{\mathcal{P}}
\newcommand{\He}{{H}}
\newcommand{\bxi}{\bm{\xi}}
\newcommand{\bbR}{\mathbb{R}}
\newcommand{\bz}{\bm{0}}
\newcommand{\dd}{\,\mathrm{d}}
\newcommand{\bbN}{\mathbb{N}}
\newcommand{\bj}{\bm{j}}
\newcommand{\mE}{\mathcal{E}}
\newcommand{\mV}{\mathcal{V}}
\newcommand{\mEt}{\mE_{\rm total}}
\newcommand\pd[2]{\dfrac{\partial {#1}}{\partial {#2}}}
\newcommand{\bbf}{\bm{f}}
\numberwithin{theorem}{section}
\numberwithin{table}{section}
\title{Highly efficient energy-conserving moment method for the multi-dimensional Vlasov-Maxwell system}
\author{Tianai Yin\thanks{Beijing Computational Science Research Center, Beijing,
     100193, China. Email: {\tt tianai.yin@csrc.ac.cn}}, ~~Xinghui Zhong\thanks{School of Mathematical Sciences, Zhejiang University, Hangzhou, 310027, China. Email: {\tt zhongxh@zju.edu.cn}.},
    ~~Yanli
  Wang\thanks{Beijing Computational Science Research Center, Beijing,
  100193,    China. Email: {\tt ylwang@csrc.ac.cn}.} }
\begin{document}
\maketitle
\begin{abstract}
  In this paper, we propose an energy-conserving numerical method to
  solve the Vlasov-Maxwell (VM) system based on the regularized moment
  method proposed in \cite{2014Globally}. The globally hyperbolic
  moment system is deduced for the multi-dimensional VM system under
  the framework of Hermite expansions, where the expansion center and
  the scaling factor are set as the macroscopic velocity and local
  temperature, respectively. Thus, the effect of the Lorentz force
  term can be reduced into several ODEs regarding the macroscopic
  velocity and the higher-order moment coefficients, which can
  significantly reduce the computational cost of the whole system. An
  energy-conserving numerical scheme is proposed to solve the moment
  equations and Maxwell's equations, where only a small linear
  equation system needs to be solved for the implicit part. Benchmark
  examples such as the Landau damping, two-stream instability, Weibel
  instability, and the two-dimensional Orszag-Tang vortex problem are
  studied to validate the efficiency and excellent energy-preserving
  property of the numerical scheme.
  
 \noindent\textbf{Keywords}: Vlasov-Maxwell system; regularized moment method; energy conservation

\end{abstract}

\section{Introduction}

 Plasma, which exists widely in the universe, is the fourth fundamental state of matter after solid, liquid, and gas. Understanding the complex behavior of plasma has led to significant advances ranging from space physics and fusion energy, to high-power microwave generation and large-scale particle accelerators. One of the fundamental models in plasma physics is the Vlasov system \cite{Plasma}, which describes the time evolution of the distribution function of collisionless charged particles with long-range interactions. The long-range interactions may occur under a self-generated electromagnetic field. For example, the evolution of the electromagnetic field can be modeled by Maxwell's equations or Poisson's equation in the zero-magnetic field limit, resulting in the well-known Vlasov-Maxwell (VM) or Vlasov-Poisson (VP) systems. 




 Numerically solving the VM system is a difficult task. There are
 several challenges such as the high dimensionality, the conservation
 of physical quantities due to the Hamiltonian structure of the system
 \cite{morrison1980maxwell, morrison2013general}, various physical
 phenomena, nonlinearity, etc. Generally, there are two types of
 methods, i.e. stochastic methods and deterministic methods. Among the
 stochastic methods, the particle-in-cell (PIC) method
 \cite{dawson1962one, tskhakaya2007particle, birdsall2018plasma} has
 been a prevalent numerical tool for a long time. In the PIC method,
 each plasma particle is considered, and all macro-quantities are
 computed from the position and velocity of these
 particles. Meanwhile, the force acting on the particles is computed
 from the field equations. The PIC method enjoys the advantage of a
 relatively low cost for high-dimensional problems and the accurate
 computation of the convection term due to the Lagrangian nature,
 while suffering from the statistical noise and low resolution of the
 electron distribution function, especially when dealing with low
 temperature and high densities plasma \cite{duclous2009high}. In
 addition, several works have been done for the PIC method to preserve
 physical conservation. The method in \cite{birdsall2018plasma} can
 conserve mass and momentum in a uniform computational grid. In
 \cite{lewis1970energy, evstatiev2013variational}, the time-explicit
 energy-conserving schemes were presented. Moreover, the
 energy-conserving implicit PIC algorithms for the VM system with a
 finite difference method in the physical space were proposed in
 \cite{markidis2011energy}, while a fully implicit solver for the VA
 system based on Newton-Krylov methods was proposed in
 \cite{chen2011energy}. Recently, highly efficient unified gas-kinetic
 wave-partical (UGKWP) methods, combining the deterministic UGKS
 method and the stochastic Monte Carlo method, has been proposed to
 solve the Boltzmann equations and extended to the VM system in
 \cite{UGKWPVM}.

 For the deterministic methods, there are several different types of
 solvers. For example, the Runge-Kutta discontinuous Galerkin methods
 (RKDG) have been proposed in \cite{cheng2014RKDG} for the VP and VM
 system. Semi-Lagrangian methods \cite{califano2001fast,
   califano1998kinetic, qiu2010conservative, qiu2011positivity}, which
 are also combined with WENO reconstruction \cite{Qiu2021Con} or DG
 methods \cite{Qiu2021High}, are adopted to solve Vlasov-type
 equations.  Moreover, the spectral methods, where the Fourier basis
 function or the Hermite polynomials are utilized to approximate the
 distribution functions, have been widely studied. We refer readers to
 \cite{Multi_hermite, eliasson2003numerical} and the references cited
 therein for more details. Besides, the finite difference method
 \cite{umeda2009two} and Hamiltonian splitting method \cite{Hamilton}
 both have specific solvers for the VM system. For most of the methods
 proposed above, the conservation of the number of particles is easy
 to achieve, while the conservation of the total energy will be much
 more difficult. Achieving energy conservation is also a developing
 goal of the above methods. The implicit and explicit
 energy-preserving RKDG methods were proposed in \cite{VA_Zhong,
   VM_Zhong, cheng_numerical_2015} for the Vlasov-Amp\`{e}re (VA) and
 VM systems, respectively.  The conservation of charge, momentum and
 energy is discussed for the finite volume scheme with moment-based
 acceleration algorithm proposed in \cite{TAITANO2015718,
   taitano2015charge} for the VA system. The energy conserving
 discontinuous Galerkin spectral element method and Legendre-Fourier
 spectral method for the VP system are discussed in
 \cite{madaule2014energy, manzini2016legendre},
 respectively. Moreover, a conservative Hermite spectral method
 combined with discontinuous Galerkin method is proposed in
 \cite{FilbetXiong2022} for the VP system, with its stability further
 studied in \cite{bessemoulin2022stability}.  In
 \cite{einkemmer2021mass}, a dynamical low-rank algorithm that
 conserves mann, momentum, and energy, is proposed for the VP system.
 Besides, the conservation of the total energy is achieved when the
 central numerical flux is employed with the Maxwell's equations in
 the framework of the Hermite-DG method for the VM system
 \cite{hermite_DG_energy}.  A numerical scheme preserves both
 positivity of the distribution function and total energy is proposed
 for the VM system in \cite{allmann2022energy}.

 Recently, a regularized moment method was presented in
 \cite{2014Globally} for the kinetic equation, where series of Hermite
 expansions are adopted to approximate the distribution function,
 where the expansion center and the scaling factor of the basis
 functions are set as the macroscopic velocity and the local
 temperature, respectively, which are chosen adaptively according to
 the distribution function. Moreover, a special regularization method
 was proposed to obtain the globally hyperbolic moment systems for
 kinetic equations \cite{2014Globally}. This regularized moment method
 naturally bridges between the macroscopic and microscopic
 descriptions of the particles. It has been verified that this method
 has spectral convergence with the number of moments, and has been
 successfully applied to solve the Boltzmann-type equations
 \cite{2014Globally}, the VP system \cite{Vlasov2012}, and the 1D VP
 Fokker-Planck equations numerically \cite{wang2017VPFP}.

Though it is verified to be efficient, the numerical schemes adopted in \cite{Vlasov2012} can only conserve the  mass and momentum for the VP system, and no results on the conservation of the total energy have been achieved. Inspired by the efficiency of the regularized moment method and the energy-conserving RKDG method in \cite{VM_Zhong, VA_Zhong}, we propose an energy-conserving regularized moment method for the multi-dimensional VM system. The distribution function is expanded by a series of Hermite functions, with the expansion center and the scaling factor chosen adaptively as in \cite{Vlasov2012}. With this specially chosen expansion center, the effect of the Lorentz force from the electromagnetic field can be changed into a linear combination of the moment coefficients, in which case the computational cost can be significantly reduced. To design the energy-conserving scheme, the moment system is split into the convection step and the Lorentz force step by the Strang-splitting method. Thus, the effect of the Lorentz force term is reduced to several ODEs regarding the macroscopic velocity and high-order moment coefficients. Most importantly, an implicit scheme is developed to solve Maxwell's equations and Lorentz force step simultaneously, enabling the  conservation of the mass and total energy for the VM system. Only a  small system of linear equations needs to be solved for this implicit scheme. Several numerical examples of the VM system, such as the one-dimensional Landau damping, two-stream instability, Weibel instability, and the two-dimensional Orszag-Tang vortex problems, are studied to exhibit the high efficiency of this energy-conserving numerical method.

The rest of the paper is as follows: the VM system and related physical properties are briefly introduced  in Sec. \ref{sec:FPL}. The regularized moment method and the deduction of the globally hyperbolic moment equations are proposed in Sec. \ref{sec:method}. In Sec. \ref{sec:scheme}, the temporal and fully discrete energy-conserving numerical schemes are presented with the related proof. Several numerical tests are studied in Sec. \ref{sec:num}, with some concluding remarks in Sec. \ref{sec:conclusion}.

\section{The Vlasov-Maxwell equations}
\label{sec:FPL}
In this section, we introduce the VM system. Under suitable scaling of the characteristic time, the length, and the characteristic electric and magnetic field, the dimensionless Vlasov equation is given by
\begin{equation}
		\frac{\partial f}{\partial t}+\bv \cdot \nabla_{\bm{x}} f + (\bm{E}+\bm{v}\times\bm{B})\cdot \nabla_{\bm{v}} f=0,\qquad (t,\bm{x},\bm{v})\in \mathbb{R}^+\times\Omega \times \mathbb{R}^3,
\label{eq:vlasov}
\end{equation}
where $f(t,\bm{x},\bm{v})$ is the distribution function describing the motion of the charged particles at position $\bm{x}$ with microscopic velocity $\bm{v}$ at time $t$. $\Omega \subset \bbR^3$ denotes the spatial domain. The  $3$-dimensional microscopic velocity space is set as $\mathbb{R}^3$. 
$(\bm{E}(t,\bx),\bm{B}(t,\bx))$ is the electromagnetic field, when it is modeled by the dimensionless Maxwell's equations
\begin{equation}
\label{eq:maxwell}
		\left\{
		\begin{aligned}
			&\frac{\partial\bm{E}}{\partial t}-\nabla_{\bm{x}}\times \bm{B}=-\bm{J},\\
			&\frac{\partial \bm{B}}{\partial t}+\nabla_{\bm{x}}\times\bm{E}=0,
		\end{aligned}
		\right.
\end{equation}
 with the current density $\bJ$ defined as 
\begin{equation}
\label{eq:current}
	\bm{J}(t,\bm{x})=\int_{\mathbb{R}^3}\bm{v}f(t,\bm{x},\bm{v}) \dd\bm{v},
\end{equation}
we obtain the Vlasov-Maxwell system.
It is worth mentioning that Maxwell's equations \eqref{eq:maxwell} are further supplemented by Gauss's law \cite{jackson1977classical} as
\begin{equation}
\label{eq:gauss_law}
	\nabla_{\bx}\cdot\bm{E}= \rho - \rho_{\rm bound}, \qquad\nabla_{\bx}\cdot\bm{B}=0,
\end{equation}
with the density $\rho$ defined as 
\begin{equation}
  \label{eq:rho}
  \rho = \int_{\bbR^3} f(t,\bx,\bv) \dd \bx,
\end{equation}
and $\rho_{\rm bound}$ being the density of particles from the background. When the background is vacuum, $\rho_{\rm bound} = 0$. In the zero-magnetic limit, the VM system becomes the VP or VA system 
\begin{equation}
\label{eq:vp}
\begin{aligned}
& & \frac{\partial f}{\partial t}+\bv \cdot \nabla_{\bm{x}} \cdot f+ \bm{E}\cdot \nabla_{\bm{v}} f=0, & \\
 & &\bm{E}=-\nabla \phi,\qquad -\Delta\phi=
 \rho - \rho_{\rm{bound}}, &
 \qquad \text{Vlasov-Poisson}, \\
 && \frac{\partial \bm{E}}{\partial t}=-\bm{J}, & \qquad \text{Vlasov-Amp\`{e}re}.
\end{aligned}
\end{equation}
The VP and VA systems are equivalent in the absence of external fields, when the charge continuity equation
\begin{align*}
  \pd{\rho}{t} +\nabla_{\bx} \cdot {\bm J}=0
\end{align*}
is satisfied.

In this paper, we focus on the VM system. All the discussions can be extended to the VA system by properly adjusting $\bm B=0$. Besides, the density $\rho$ defined in \eqref{eq:rho}, we are also interested in the physical variables such as the macroscopic velocity $\bu$ and the thermal temperature $\mT$ satisfying
\begin{align}
\label{eq:macro}
	\rho\bm{u}=\int_{\mathbb{R}^3}\bm{v}f\dd\bm{v},\qquad 
  \frac{1}{2}\rho|\bm{u}|^2+\frac{3}{2}\rho\mathcal{T}= \frac{1}{2}\int_{\mathbb{R}^3}|\bm{v}|^2 f\dd\bm{v},
\end{align}
as well as the heat flux $\bq$ and the pressure tensor $p_{ij}$ defined as 
\begin{align}
  \label{eq:heatflux_pressure} 
  	\bm{q}=\frac{1}{2}\int_{\mathbb{R}^3}|\bm{v}-\bm{u}|^2(\bm{v}-\bm{u}) f \dd\bm{v},\qquad 
  	p_{i j}=\int_{\mathbb{R}^3}(v_i-u_i)(v_j-u_j)f\dd\bm{v}, \quad i,j = 1, 2, 3.
\end{align}
More details can be found in \cite{struchtrup2005macroscopic}. 

For a collision-based system composed with a large number of non-interacting particles, where quantum effects can be ignored, the most likely distribution function \cite{krall1973principles} is the Maxwellian distribution, which is also known as the equilibrium distribution, defined as
\begin{equation}
	f_{\rm eq}(t,\bm{x},\bm{v})=\frac{\rho(t,\bm{x})}{[2\pi\mathcal{T}(t,\bm{x})]^{3/2}}
	\text{exp}\left(- \frac{\left(\bm{v}-\bm{u}(t,\bm{x}) \right)^2}{2\mathcal{T}(t,\bm{x})}\right).
\end{equation}
Moreover, the VM system conserves the total particle number $\mathcal{P}(t)$ and the total energy $\mE_{\rm total}(t)$, defined as
\begin{subequations}
\begin{align}
   \label{eq:total_mass}
   \mathcal{P}(t)= \int_{\Omega\times\mathbb{R}^3}f(t,\bm{x},\bm{v})
 \dd\bm{x}\dd\bm{v}, \qquad 
  \mathcal{E}_{\rm total}(t)= \mathcal{E}_K(t)+\mathcal{E}_B(t)+\mathcal{E}_E(t), 
\end{align}
\end{subequations}
where $\mE_{\rm total}(t)$ is composed of the kinetic and electromagnetic energy as 
\begin{equation}
\label{eq:energy}
\begin{gathered}
		\mathcal{E}_K(t)=\frac{1}{2}\int_{\Omega}\int_{\mathbb{R}^3}|\bm{v}|^2f(t,\bm{x},\bm{v}) \dd\bm{v}\dd\bm{x} = \frac{1}{2}\int_{\Omega} \rho|\bu|^2 + 3 \rho \mT \dd \bx, \\
		\mathcal{E}_E(t)=\frac{1}{2}\int_{\Omega}|\bm{E}(t,\bm{x}) |^2 \dd\bm{x},\qquad 
		\mathcal{E}_B(t)=\frac{1}{2}\int_{\Omega}|\bm{B}(t,\bm{x}) |^2 \dd\bm{x}.
\end{gathered}
\end{equation}
\section{The moment method}
\label{sec:method}
In this section, we lay out the details of the moment method for the VM system, to obtain the moment equations of the VM system and discuss their hyperbolicity property. 

\subsection{Series expansion and moment systems}
Following the method in \cite{Vlasov2012}, the distribution function $f(t, \bx, \bv)$ is expanded into  Hermite series as 
\begin{equation}
    \label{eq:expansion} 
	f(t,\bm{x},\bm{v})=\sum_{\alpha\in\mathbb{N}^3}f_{\alpha}(t,\bm{x})
	\mathcal{H}_{\mathcal{T},\alpha}
	\left(\frac{\bm{v}-\bm{u}(t,\bm{x})}{\sqrt{\mathcal{T}(t,\bm{x})}}\right),
\end{equation}
where the basis functions $\mathcal{H}_{\mathcal{T},\alpha}(\cdot)$ are defined as
\begin{equation}
\label{eq:basic_fun}
	\mathcal{H}_{\mathcal{T},\alpha}(\bm{\xi})=\frac{1}{(2\pi)^{3/2}}\mathcal{T}^{-\frac{|\alpha|+3}{2}}\prod_{d=1}^{3}
\He_{\alpha_d}(\xi_d)\text{exp}\left(-\frac{\xi_d^2}{2}\right ).
\end{equation}
Here $\alpha=(\alpha_1, \alpha_2, \alpha_3)$ is a $3$-dimensional multi-index,   $\bm{\xi}=(\xi_1,\xi_2,\xi_3)$ is defined as 
\begin{equation}
\label{eq:xi}
	\bxi=\frac{  \bm{v}-\bm{u}(t,\bm{x}) } {\sqrt{\mathcal{T}(t,\bm{x}) }},
\end{equation}
and $\He_{\alpha_d}$ is the Hermite polynomial
\begin{equation}
\label{eq:Her}
	\He_{k}(x)=(-1)^k \text{exp}\left(\frac{x^2}{2}\right)\frac{\dd^k}{\dd x^k}\text{exp}\left(-\frac{x^2}{2}\right). 
\end{equation}
For convenience, $\He_k(x)$ is taken as zero if $k<0$. Thus, $\mathcal{H}_{\mathcal{T},\alpha}$ is zero when any component of $\alpha$ is negative. Hermite polynomials have several important properties, which are  useful for deriving the moment equations of the VM system, such as 
\begin{itemize}
    \item Orthogonality:
\begin{equation} 
    \label{eq:orth}
\int_{\bbR}\He_{m}(x)\He_k(x)\text{exp}\left(\frac{-x^2}{2}\right) \dd x
	=m!\sqrt{2\pi}\delta_{mk},
\end{equation}	
\item Recursion relation:
\begin{equation} 
\label{eq:rec}
\He_{k+1}(x)=x \He_k(x)-k \He_{k-1}(x),
\end{equation}
\item Differential relation:
\begin{equation}
\label{eq:diff}
\quad \He_k'(x)=k \He_{k-1}(x).
\end{equation}
\end{itemize}
With the orthogonality of the Hermite polynomials, the moment coefficients $f_{\alpha}$ in \eqref{eq:expansion} satisfy
\begin{gather}
\label{eq:moment_macr}
	\rho=f_{\bz},\qquad
	f_{e_i}=0,\quad i=1,2,3,\qquad\sum_{d=1}^3 f_{2e_d}=0, \\
	q_i=2f_{3e_i}+\sum_{d=1}^3 f_{2e_d+e_i},\qquad
	p_{ij}=\delta_{ij}\rho\mathcal{T}+(1+\delta_{ij})f_{e_i+e_j},\quad i,j=1,2,3,
\end{gather}	
where $e_d$ is the $3$-dimensional multi-index whose $d$-th entry is $1$ and all other entries are zero.
Moreover, based on the properties of the Hermite polynomials, it holds for the basis functions 
$\mathcal{H}_{\mathcal{T},\alpha}$ that 
\begin{align}
\label{eq:relation_basfun}
\frac{\partial}{\partial v_j}\mathcal{H}_{\mathcal{T},\alpha}\left(\frac{\bm{v}-\bm{u}}{\sqrt{\mathcal{T}}}\right) & =-\mathcal{H}_{\mathcal{T},\alpha+e_j}\left(\frac{\bm{v}-\bm{u}}{\sqrt{\mathcal{T}}}\right), \\
v_j\mathcal{H}_{\mathcal{T},\alpha}\left(\frac{\bm{v}-\bm{u}}{\sqrt{\mathcal{T}}}\right)& =
	\mathcal{T}\mathcal{H}_{\mathcal{T},\alpha+e_j}
	+u_j\mathcal{H}_{\mathcal{T},\alpha}
	+\alpha_j \mathcal{H}_{\mathcal{T},\alpha-e_j}, \qquad j=1,2,3.	
\end{align}
Thus, the force term $(\bm{E}+\bm{v}\times\bm{B})\cdot \nabla_{\bm{v}}f$ is expanded  as
\begin{equation}
\begin{split}
	&(\bm{E}+\bm{v}\times\bm{B})\cdot \nabla_{\bm{v}}f\\
	=&-\sum_{\alpha \in \bbN^3}\left\{\sum_{d,l,m=1}^{3}\Big[
	\left(E_d+\epsilon_{dlm}u_l B_m\right) f_{\alpha-e_d}
	+\epsilon_{dlm}(\alpha_l+1)B_m f_{\alpha-e_d+e_l} \Big]\right\}
	\mathcal{H}_{\mathcal{T},\alpha}\left(\frac{\bm{v}-\bm{u}}{\sqrt{\mathcal{T}} }	\right),
 \end{split}
 \label{eq:Lorentz_expansion}
\end{equation}
where $\epsilon_{d l m}$ are the Levi-Civita symbols defined as
\begin{equation}
\label{eq:LC}
\epsilon_{d l m}=
	\left\{
		\begin{aligned}
			1,&\quad \textnormal{ if } (d,l,m) \textnormal{ is a cyclic permutation of}\;(1,2,3),\\
			-1,&\quad \textnormal{ if } (d,l,m) \textnormal{ is an anticyclic permutation of}\;(1,2,3),\\
			0,&\quad \textnormal{ if } (d-l)(l-m)(m-d)=0.
		\end{aligned}
	 \right.
\end{equation}

Substituting \eqref{eq:expansion} into the Vlasov equation \eqref{eq:vlasov}, and matching the coefficients on both sides,  the moment system for the Vlasov equation is derived as 
{\small 
\begin{equation}
\label{eq:moment1}
	\begin{split}
		\frac{\partial f_{\alpha}}{\partial t}
		+&\sum_{d=1}^3\left(\frac{\partial u_d}{\partial t}+\sum_{j=1}^3u_j\frac{\partial u_d}{\partial x_j}-E_d-\sum_{l,m=1}^3\epsilon_{dlm}u_lB_m  \right)f_{\alpha-e_d}   \\
		-&\sum_{d,l,m=1}^3 \epsilon_{dlm}(\alpha_l+1)B_mf_{\alpha-e_d+e_l}
		+\frac{1}{2}\left(\frac{\partial \mathcal{T}}{\partial t}+\sum_{j=1}^3u_j\frac{\partial \mathcal{T}}{\partial x_j}   \right)\sum_{d=1}^3f_{\alpha-2e_d}\\
		+&\sum_{d,j=1}^3\left[ \frac{\partial u_d}{\partial x_j}\left(\mathcal{T}f_{\alpha-e_d-e_j}+(\alpha_j+1)f_{\alpha-e_d+e_j} \right) 
		+ \frac{1}{2}\frac{\partial \mathcal{T}}{\partial x_j}\left(\mathcal{T}f_{\alpha-2e_d-e_j}+(\alpha_j+1)f_{\alpha-2e_d+e_j}  \right)	\right]\\
		+&\sum_{j=1}^3\left(\mathcal{T}\frac{\partial f_{\alpha-e_j}}{\partial x_j}+
		u_j\frac{\partial f_{\alpha}}{\partial x_j}
		+(\alpha_j+1)\frac{\partial f_{\alpha+e_j}}{\partial x_j}\right) = 0,\qquad\forall|\alpha|\geqslant  0.
	\end{split}
\end{equation}
}

Following the routine in \cite{Vlasov2012},  we can deduce the equations for the density, macroscopic velocity and temperature from \eqref{eq:moment1} by letting $\alpha = \bz, e_d$ and $2e_d, d = 1, 2,3$, as
\begin{equation}
\label{eq:moment_rhouE}
\begin{gathered}
	\frac{\partial \rho}{\partial t}+
	\sum_{j=1}^3\left(u_j\frac{\partial \rho}{\partial x_j}+\rho\frac{\partial u_j}{\partial x_j}\right)=0, \\
		\frac{\partial u_d}{\partial t}+\sum_{j=1}^3u_j\frac{\partial u_d}{\partial x_j}+
	\frac{1}{\rho}\sum_{j=1}^3\frac{\partial p_{jd}}{\partial x_j}=E_d+\sum_{l,m=1}^3 \epsilon_{dlm}u_lB_m, \\
	\rho\left(
	\frac{\partial \mathcal{T}}{\partial t}
	+\sum_{j=1}^3u_j\frac{\partial \mathcal{T}}{\partial x_j}\right)
	+\frac{2}{3}\sum_{j=1}^3
	\left(\frac{\partial q_j}{\partial x_j}+\sum_{d=1}^3 p_{j d}\frac{\partial u_d}{\partial x_j} \right)=0.
\end{gathered}
\end{equation}
Moreover, substituting \eqref{eq:moment_rhouE} into \eqref{eq:moment1} to eliminate the terms with temporal derivatives of $u_d$ and $\mathcal{T}$, 
it holds for the high order moment coefficients that 
{\small 
\begin{equation}
\label{eq:moment}
	\begin{split}
		\frac{\partial f_{\alpha } }{\partial t}
		+&\sum_{j=1}^3\left(\mathcal{T}\frac{\partial f_{\alpha-e_j}}{\partial x_j}+
		u_j\frac{\partial f_{\alpha}}{\partial x_j}
		+(\alpha_j+1)\frac{\partial f_{\alpha+e_j}}{\partial x_j}\right)\\
		+&\sum_{d,j=1}^3\left[\mathcal{T}f_{\alpha-e_d-e_j}+(\alpha_j+1)f_{\alpha-e_d+e_j} 
		-\frac{p_{jd}}{3\rho}\sum_{l=1}^3 f_{\alpha-2e_l} 
		\right]\frac{\partial u_d }{\partial x_j}\\
		-&\sum_{d,j=1}^3 \frac{f_{\alpha-e_d}}{\rho}\frac{\partial p_{jd}}{\partial x_j}
		-\frac{1}{3\rho}\left(\sum_{k=1}^3f_{\alpha-2e_k}\right)\sum_{j=1}^3 \frac{\partial q_j}{\partial x_j}\\
		+&\sum_{j=1}^3\left[\sum_{d=1}^3\mathcal{T}f_{\alpha-2e_d-e_j}+(\alpha_j+1)f_{\alpha-2e_d+e_j}  \right]	
		\left(-\frac{\mathcal{T}}{2\rho}\frac{\partial \rho}{\partial x_j}+
		\frac{1}{6\rho}\sum_{d=1}^3\frac{\partial p_{dd}}{\partial x_j}\right)\\
		=&\sum_{d,l,m=1}^3 \epsilon_{dlm}(\alpha_l+1)B_m f_{\alpha-e_d+e_l}
		,\quad\forall|\alpha|\geqslant 2.
		\end{split}
\end{equation}
}


 Collecting \eqref{eq:moment_rhouE} and \eqref{eq:moment},  we obtain the moment equations of the VM system with infinite number of equations. In the numerical simulation, a truncation should be adopted for the expansion of the distribution function in \eqref{eq:expansion}, rendering \eqref{eq:moment} a finite moment system.  A regularization method is further applied to obtain a closed moment system, which we will discuss in the next section.

\subsection{Closure of the moment system} 
To obtain a finite system, the expansion \eqref{eq:expansion} is truncated as \begin{equation}
	f(t,\bm{x},\bm{v})\approx \sum_{|\alpha|\leqslant  M}f_{\alpha}(t,\bm{x})
	\mathcal{H}_{\mathcal{T},\alpha}
	\left(\frac{\bm{v}-\bm{u}(t,\bm{x})}{\sqrt{\mathcal{T}(t,\bm{x})}}\right),
\end{equation}
where $M$ is the truncation order, and $|\alpha| = \alpha_1 + \alpha_2 + \alpha_3$. The resulted finite  moment system is further closed by adopting the regularization proposed in \cite{2014Globally}. 
Let $\bbf = (\rho, \bu, \mT, f_{2e_1}, \dots )$. Substituting the terms $f_{\alpha}, |\alpha| = M+1$ with the regularized term in \cite{2014Globally}, the quasi-linear closed moment system can be rewritten as 
\begin{equation}
\label{eq:regularization}
\pd{\bbf}{t} + \sum_{j=1}^3 \hat{\bm{M}}_j(\bbf) \pd{\bbf}{x_j} = \bm{G}\bbf + \bm{g},
\end{equation}
with
\begin{gather}
\label{eq:global}
	\hat{\bm{M}}_j(\bbf)\dfrac{\partial \bbf}{\partial x_j}
	=	\bm{M}_j(\bbf)\dfrac{\partial \bbf}{\partial x_j}
	-\sum_{|\alpha|=M}\mathcal{R}_M^j(\alpha)I_{\mathcal{N}(\alpha)},\\
	\mathcal{R}_M^j(\alpha)=(\alpha_j+1)\left[
	\sum_{d=1}^{D} f_{\alpha-e_d+e_j}\frac{\partial u_d}{\partial x_j}
	+\frac{1}{2}(\sum_{d=1}^{D} f_{\alpha-2e_d+e_j}
	)\frac{\partial \mathcal{T}}{\partial x_j}	\right],\\
		\bm{g}_{\mathcal{N}(e_i)}=E_i,\quad
	\bm{G}_{\mathcal{N}(\alpha),\mathcal{N}(\alpha-e_d+e_l)}
	=\sum_{m=1}^3\epsilon_{d l m}(\alpha_l+1)B_m,\quad d,l=1,2,3, \label{eq:global:3}
\end{gather}
where 
\begin{equation}
		\mathcal{N}(\alpha)
		=\sum_{i=1}^3
		\begin{pmatrix}
		\sum_{k=4-i}^3 \alpha_k+i-1\\
		i
		\end{pmatrix}
		+1.
\end{equation}
Here $\bm{M}_j$ is an $\mathcal{N} \times \mathcal{N}$ matrix with $\mathcal{N}=C^3_{M+3}$, corresponding to the terms with derivatives of $\bbf$, the detailed form of which can be derived from the moment system \eqref{eq:moment_rhouE} and \eqref{eq:moment}. $I_k$  is the $k$-th column of the $\mathcal{N} \times \mathcal{N}$ identity matrix.
$\bm{G}$ is an $\mathcal{N} \times \mathcal{N}$ matrix and $\bm{g}$ is a vector of length $\mathcal{N}$, whose entries are given in \eqref{eq:global:3} while all other entries are zero. 
We refer readers to \cite{2014Globally} for the detailed derivation of this  moment system and the study on  the global hyperbolicity of \eqref{eq:regularization}.
We show here the most important result in the following lemma.
\begin{lemma}	For any unit vector $\bm{n} = (n_1,n_2,n_3)^T \in \bbR^3$, the matrix 
	$\sum^3_{j=1}n_j\hat{\bm{M}}_j$ is diagonalizable.
	Precisely, its characteristic polynomial is
	\begin{equation}
		\left| \lambda\bm{\text{I}}-
		\sum_{j=1}^3 n_j\hat{\bm{M}}_j\right|
		=\prod^{M}_{i=0} \prod^{i}_{j=0}
		\mathcal{T}^{(j+1)/2}\He_{j+1}\left(\frac{\lambda-\bm{u}\cdot\bm{n}}{ \sqrt {\mathcal{T} } } \right),
	\end{equation}
	and its eigenvalues are 
	\begin{equation}
		\bm{u}\cdot\bm{n}+C_{i,j}\sqrt{\mathcal{T}},\quad
		1\leqslant i\leqslant j\leqslant M+1,
	\end{equation}
	where $C_{i,j}$ is the $i$-th root of $\He_j(z)$.
\end{lemma}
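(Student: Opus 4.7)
The plan is to use the rotational covariance of the Hermite basis to reduce to the one-dimensional case $\bm{n}=\bm{e}_1$, and then to recognise $\hat{\bm{M}}_1$ as admitting a block upper-triangular structure whose diagonal blocks are tridiagonal matrices generated by the Hermite three-term recurrence. For the reduction step, I would choose an orthogonal $R$ with $R\bm{e}_1=\bm{n}$ and perform the simultaneous rotation $\bm{x}\mapsto R\bm{x}$, $\bm{v}\mapsto R\bm{v}$. Under this action $\bu\mapsto R\bu$, $\mathcal{T}$ is invariant, the scaled velocity $\bxi$ in \eqref{eq:xi} transforms to $R\bxi$, and the tensor-product basis \eqref{eq:basic_fun} transforms linearly through a representation $D(R)$ of $SO(3)$ on the Hermite index space that preserves each shell $|\alpha|=k$. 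Covariance of \eqref{eq:regularization} under this action implies that $\sum_j n_j\hat{\bm{M}}_j$ is similar to $\hat{\bm{M}}_1$, so the characteristic polynomials agree.

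For the computation of $\det(\lambda\bm{I}-\hat{\bm{M}}_1)$, I would pass via a local similarity transformation to the moment-coefficient basis $\{f_\alpha:|\alpha|\leq M\}$ (possible because $\rho=f_{\bz}$, $f_{e_i}=0$, and $\mathcal{T}$ depends linearly on $\sum_d f_{2e_d}$), group indices by the transverse pair $\alpha_\perp=(\alpha_2,\alpha_3)$, and order $\alpha_1$ within each slice. Using the differential and recurrence relations \eqref{eq:diff}--\eqref{eq:rec}, together with the fact that $\mathcal{R}_M^1$ in \eqref{eq:global} cancels exactly the contributions that would couple $|\alpha|=M$ to the unavailable $|\alpha|=M+1$ at the top of each slice, the matrix becomes block upper-triangular with diagonal blocks
\[
T_n \;=\; \begin{pmatrix}
u_1 & 1 & & & \\
\mathcal{T} & u_1 & 2 & & \\
& \mathcal{T} & u_1 & \ddots & \\
& & \ddots & \ddots & n-1 \\
& & & \mathcal{T} & u_1
\end{pmatrix}, \qquad n=M-|\alpha_\perp|+1.
\]

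To compute $\det(\lambda\bm{I}-T_n)$, I would write the eigenvalue equation as a three-term recursion on the entries $v_k$ of an eigenvector, set $x=(\lambda-u_1)/\sqrt{\mathcal{T}}$, and substitute the ansatz $v_k=\mathcal{T}^{k/2}\He_k(x)v_0/k!$. The interior equations are then satisfied automatically by \eqref{eq:rec}, and closure of the last row forces $\He_n(x)=0$; matching leading coefficients yields $\det(\lambda\bm{I}-T_n)=\mathcal{T}^{n/2}\He_n((\lambda-u_1)/\sqrt{\mathcal{T}})$. One checks small cases $n=1,2,3$ against $\He_1,\He_2,\He_3$ as a sanity check. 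Taking the product over the $r+1$ choices of $\alpha_\perp$ with $|\alpha_\perp|=r$ and re-indexing $k=M-r+1$ reproduces the stated formula; diagonalizability follows since every $\He_k$ has real, simple zeros.

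The main obstacle is the algebraic verification that $\hat{\bm{M}}_1$ really does admit the asserted block upper-triangular form. The source terms $\partial u_d/\partial x_1$ and $\partial\mathcal{T}/\partial x_1$ in \eqref{eq:moment} create a priori couplings between distinct $\alpha_\perp$ slices, and one must exhibit the correct ordering of slices while tracking both the Jacobian of the change of variables from $\bbf$ to $(f_\alpha)$ and the cancellations induced by $\mathcal{R}_M^1$, so that these couplings survive only strictly above the diagonal—in particular, nothing new is generated below the diagonal in the top row where the regularization acts. This bookkeeping, together with an explicit description of the $SO(3)$ representation $D(R)$ needed in the reduction step, is the technical crux of the proof.
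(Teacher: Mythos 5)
The paper does not actually prove this lemma: it states the result and defers the derivation and the hyperbolicity analysis entirely to the cited reference \cite{2014Globally}. So there is no internal proof to compare against; what you have written is essentially a reconstruction of the argument in that reference, and the architecture you propose (rotational covariance to reduce to $\bm{n}=e_1$, a change of variables from $\bbf=(\rho,\bu,\mT,\dots)$ to the raw coefficients $f_\alpha$, a block decomposition indexed by the transverse pair $(\alpha_2,\alpha_3)$, and tridiagonal diagonal blocks generated by the recurrence \eqref{eq:rec}) is the right one. Your bookkeeping also checks out where you carry it through: the slice with $|\alpha_\perp|=r$ has block size $M-r+1$ and multiplicity $r+1$, and $\prod_{r=0}^{M}\bigl[\mT^{(M-r+1)/2}\He_{M-r+1}\bigr]^{r+1}$ re-indexes exactly to the stated double product, whose total degree $\binom{M+3}{3}$ matches $\mathcal{N}=C^3_{M+3}$. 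The tridiagonal determinant computation is likewise correct, since $p_k=(\lambda-u_1)p_{k-1}-(k-1)\mT\,p_{k-2}$ reproduces $\He_k(x)=x\He_{k-1}(x)-(k-1)\He_{k-2}(x)$ under $p_k=\mT^{k/2}\He_k\bigl((\lambda-u_1)/\sqrt{\mT}\bigr)$.

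The genuine gap is the one you yourself flag, and it cannot be deferred: the verification that, after the change of variables and the subtraction of $\mathcal{R}_M^1$ in \eqref{eq:global}, the matrix $\hat{\bm{M}}_1$ is block triangular with precisely those tridiagonal diagonal blocks. This is not routine bookkeeping but the entire content of the regularization: the unregularized Grad-type closure (setting $f_\alpha=0$ for $|\alpha|=M+1$) produces a matrix whose characteristic polynomial does \emph{not} factor into Hermite polynomials and which loses real diagonalizability for large disturbances, so the factorization genuinely fails without showing that $\mathcal{R}_M^1$ removes exactly the entries (arising from the eliminated $\partial_t u_d$ and $\partial_t\mT$ terms in \eqref{eq:moment}, i.e.\ the $(\alpha_1+1)f_{\alpha-e_d+e_1}\,\partial u_d/\partial x_1$ and $(\alpha_1+1)f_{\alpha-2e_d+e_1}\,\partial\mT/\partial x_1$ contributions at $|\alpha|=M$) that would otherwise sit in the top rows of the diagonal blocks and destroy the tridiagonal form. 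A complete proof must also make the rotational-covariance step precise, namely exhibit the block-diagonal representation $D(R)$ on each shell $|\alpha|=k$ and check that $\mathcal{R}_M^j$ transforms consistently, since the eigenvalue formula $\bu\cdot\bm{n}+C_{i,j}\sqrt{\mT}$ rests entirely on that similarity. As submitted, your proposal is a correct and well-organized plan whose central step remains an assertion; filling it in amounts to reproducing the computation of \cite{2014Globally}.
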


Up to  now, we have obtained the closed moment equations for the VM system. It is worth mentioning that different from the VP system studied in \cite{Vlasov2012}, the magnetic field $\bm{B}$ in the VM system is coupled in the governing equations of $f_{\alpha}$. Thus, additional  care needs to  be taken to ensure the conservation properties.
 The energy-conserving numerical scheme will be introduced in the next section to solve the moment system coupling with Maxwell's equations \eqref{eq:maxwell}. 

\section{Energy-conserving numerical schemes}
\label{sec:scheme}
In this section, we propose the energy-conserving numerical schemes for the VM system, with the temporal discrete schemes discussed in Sec. \ref{sec:semi} and fully discrete schemes discussed in Sec. \ref{sec:full}.

\subsection{Temporal discrete schemes}
\label{sec:semi}
Inspired by the numerical method for the VP system in \cite{Vlasov2012}, the  Strang-splitting method is utilized to solve the VM system. Precisely, the Vlasov equation is split into the following two parts:
\begin{itemize}
  \item the convection step
  \begin{equation}
  \label{eq:convection}
	\dfrac{\partial f}{\partial t}+\bm{v}\cdot\nabla_{\bm{x} }f=0,
\end{equation}
\item the Lorentz force step 
\begin{equation}
 \label{eq:Lorentz}
	\dfrac{\partial f}{\partial t}+(\bm{E}+\bm{v}\times\bm{B})\cdot \nabla_{\bm{v}} f=0.
\end{equation}
\end{itemize}

In the framework of the regularized moment method discussed in
Sec. \ref{sec:method}, the governing equation \eqref{eq:Lorentz} can be reduced into several ODEs based on \eqref{eq:moment_rhouE} and \eqref{eq:moment}, yielding
\begin{align}
  \label{eq:Lorentz_u} 
  \dfrac{\dd \bm{u}}{\dd t} &=\bm{E}+\bm{u}\times\bm{B}, \\
  \label{eq:Lorentz_E} 
  \frac{\dd \mathcal{T}}{\dd t}&=0,\\
  \label{eq:Lorentz_f}
  	\dfrac{\dd f_{\alpha}}{\dd t} &=\sum_{d,l,m=1}^3
	\epsilon_{dlm}(\alpha_l+1)B_m f_{\alpha-e_d+e_l},\qquad 2\leqslant |\alpha|\leqslant M.
\end{align}

It is obvious the movement of particles subjected to the Lorentz force in the moment system is very concise. Compared to the general Hermite spectral method, it is surprised to find that \eqref{eq:Lorentz_u} is Newton's first law of motion in classical mechanics. We also see that the high-order moment coefficients $f_{\alpha}$ are only related to the magnetic field and the moment coefficients in the same order of $\alpha$. Although, there are few physical definitions of the high-order moment coefficients, we still expect more physical explanations for \eqref{eq:Lorentz_f} in the future.


We now introduce two energy-conserving temporal schemes for \eqref{eq:convection} and \eqref{eq:Lorentz_u}-\eqref{eq:Lorentz_f} coupling with Maxwell's equations \eqref{eq:maxwell}. As for the convection step \eqref{eq:convection}, any implicit or explicit Runge-Kutta method can be applied to solve it and conserves the kinetic energy \cite{VA_Zhong, Vlasov2012}. We consider the forward Euler method for this step. The Lorentz step \eqref{eq:Lorentz_u}-\eqref{eq:Lorentz_f} and Maxwell's equations \eqref{eq:maxwell} contain the main coupling of the macroscopic velocity and the electromagnetic field. Thus, the key point is how to advance the macroscopic velocity and the electromagnetic field in this step to balance the kinetic and electromagnetic energy. 

For both schemes designed in the following, the electric field and the macroscopic velocity are advanced implicitly, while the magnetic field is advanced  implicitly for one scheme and explicitly for the other. 
The first scheme, denoted by {\bf Scheme-I}, is designed as follows:
\paragraph{\underline{Scheme-I: Implicit for the magnetic field}}
\begin{subequations}
  \label{eq:scheme_1}
	\begin{align}
	  \label{eq:scheme_1_con}
		&\dfrac{f^{n+1,*}-f^{n} }{\Delta t}=-\bm{v}\cdot\nabla_{\bm{x}} f^{n},\\
		\label{eq:scheme_1_B}
		&\dfrac{\bm{B}^{n+1}-\bm{B}^n}{\Delta t }=-\nabla_{\bm{x}} \times \dfrac{\bm{E}^n+\bm{E}^{n+1}}{2},\\
		\label{eq:scheme_1_E}
		&\dfrac{\bm{E}^{n+1}-\bm{E}^n}{\Delta t}=\nabla_{\bm{x}} \times \dfrac{\bm{B}^{n}+\bm{B}^{n+1}}{2}-\bm{J}^{n+1/2},\\
		\label{eq:scheme_1_u}
		&\frac{\bm{u}^{n+1}-\bm{u}^{n+1,*}}{\Delta t}=\frac{\bm{E}^n+\bm{E}^{n+1}}{2}
			+\frac{\bm{u}^{n+1,*}+\bm{u}^{n+1}}{2}\times \frac{\bm{B}^{n}+\bm{B}^{n+1}}{2},\\
			\label{eq:scheme_1_f}
			&		\frac{f_{\alpha}^{n+1}-f_{\alpha}^{n+1,*}}{\Delta t}\,=
		\sum_{d,l,m=1}^{3} \epsilon_{d l m}	(\alpha_l+1)\frac{B_m^{n}+B_m^{n+1}}{2}
	 f^{n+1,*}_{\alpha-e_d+e_l}, \qquad 2\leqslant \vert \alpha \vert\leqslant M.
	\end{align}
\end{subequations}
Here $\bm{J}^{n+1/2}$ is defined as 
\begin{equation}
\label{eq:scheme1_J}
   \bm{J}^{n+1/2}=\int_{\bbR^3} \bv
		\left( \frac{f^{n+1, \ast}+f^{n+1} }{2} \right)\dd \bv = \frac{\rho^{n+1} \bu^{n+1} + \rho^{n+1, \ast}\bu^{n+1,\ast}}{2},
\end{equation}
where $\rho^{n+1,\ast}$ and $\bu^{n+1,\ast}$ are the density and the macroscopic velocity connected with $f^{n+1,*}$ in \eqref{eq:scheme_1_con} via \eqref{eq:macro}.
 The density $\rho$ and the thermal temperature $\mT$ remain unchanged during the Lorentz step, i.e., 
\begin{equation}
  \label{eq:rho_T_n+1}
  \rho^{n+1} = \rho^{n+1, \ast},
  \qquad
  \mT^{n+1} = \mT^{n+1, \ast},
\end{equation}
which renders \eqref{eq:scheme1_J} as
\begin{equation}
\label{eq:scheme1_J_final}
   \bm{J}^{n+1/2}= \frac{\rho^{n+1, \ast}(\bu^{n+1} + \bu^{n+1,\ast})}{2}. 
\end{equation}

\begin{theorem}[Total energy conservation of {\bf Scheme-I} ]
\label{thm:energy_1}
  {\bf Scheme-I} preserves the discrete total energy 
  \begin{equation}
  \label{eq:con_E}
 	\mEt^{n+1}= \mEt^{n},
 	\end{equation}
 for the VM system in Sec. \ref{sec:FPL} with periodic boundary conditions in $\bm{x}$, where
	\begin{equation}
  \label{eq:semi_dis_energy_1} 
 \mEt^n = \frac{1}{2} \int_{\Omega} \rho^n (|\bu^n|^2 + 3 \mT^n) \dd \bx + \frac{1}{2} \int_{\Omega} |\bE^n|^2 + |\bB^n|^2 \dd \bx. 
\end{equation}
\end{theorem}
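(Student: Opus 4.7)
The plan is to handle the two stages of the Strang splitting separately. The convection step \eqref{eq:scheme_1_con} updates $f$ (and hence $\rho,\bu,\mT$) while leaving $\bE$ and $\bB$ untouched, so it suffices to show $\mE_K^{n+1,*}=\mE_K^n$ there. The Lorentz--Maxwell block \eqref{eq:scheme_1_B}--\eqref{eq:scheme_1_f} leaves $\rho$ and $\mT$ unchanged by \eqref{eq:rho_T_n+1}, so I then only need to verify $\mE_K^{n+1}+\mE_E^{n+1}+\mE_B^{n+1}=\mE_K^{n+1,*}+\mE_E^n+\mE_B^n$. The high-order moment update \eqref{eq:scheme_1_f} does not enter the balance, since $\mE_K$ depends only on $\rho$, $\bu$, $\mT$ via \eqref{eq:energy}.

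For the convection step I would multiply \eqref{eq:scheme_1_con} by $|\bv|^2/2$ and integrate over $\Omega\times\bbR^3$; periodicity in $\bx$ annihilates the divergence term $\int_\Omega\int_{\bbR^3}\tfrac{|\bv|^2}{2}\,\bv\cdot\nabla_\bx f^n\dd\bv\dd\bx$, giving $\mE_K^{n+1,*}=\mE_K^n$ directly from \eqref{eq:energy}.

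The Lorentz--Maxwell step is the main calculation. Using $\rho^{n+1}=\rho^{n+1,*}$ and $\mT^{n+1}=\mT^{n+1,*}$ from \eqref{eq:rho_T_n+1}, the kinetic increment reduces to $\mE_K^{n+1}-\mE_K^{n+1,*}=\tfrac12\int_\Omega \rho^{n+1,*}\bigl(|\bu^{n+1}|^2-|\bu^{n+1,*}|^2\bigr)\dd\bx$. Dotting \eqref{eq:scheme_1_u} with the average $(\bu^{n+1}+\bu^{n+1,*})/2$ kills the cross-product term via $\bm{a}\cdot(\bm{a}\times\bm{b})=0$, leaving the pointwise identity
\[
\tfrac12\bigl(|\bu^{n+1}|^2-|\bu^{n+1,*}|^2\bigr)=\Delta t\,\tfrac{\bu^{n+1}+\bu^{n+1,*}}{2}\cdot\tfrac{\bE^{n}+\bE^{n+1}}{2}.
\]
Multiplying by $\rho^{n+1,*}$ and integrating, the right-hand side becomes $\Delta t\int_\Omega \bJ^{n+1/2}\cdot(\bE^n+\bE^{n+1})/2\dd\bx$ by the midpoint definition \eqref{eq:scheme1_J_final}. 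In parallel, dotting \eqref{eq:scheme_1_E} with $(\bE^n+\bE^{n+1})/2$ and \eqref{eq:scheme_1_B} with $(\bB^n+\bB^{n+1})/2$, integrating, and adding, the two curl terms cancel under periodic boundary conditions via $\int_\Omega\bE\cdot(\nabla_\bx\times\bB)\dd\bx=\int_\Omega\bB\cdot(\nabla_\bx\times\bE)\dd\bx$, which produces $\mE_E^{n+1}+\mE_B^{n+1}-\mE_E^n-\mE_B^n=-\Delta t\int_\Omega\bJ^{n+1/2}\cdot(\bE^n+\bE^{n+1})/2\dd\bx$. Summing this with the kinetic identity cancels the $\bJ\cdot\bE$ terms and yields \eqref{eq:con_E}.

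The main obstacle is not analytic difficulty but verifying that three design choices align exactly: the Crank--Nicolson-style symmetric averaging of $\bE$ and $\bB$ in both \eqref{eq:scheme_1_u} and Maxwell's equations, the midpoint form \eqref{eq:scheme1_J_final} of $\bJ^{n+1/2}$, and the invariance \eqref{eq:rho_T_n+1} of $\rho$ and $\mT$ in the Lorentz step. These ensure that the single quantity $\int_\Omega\bJ^{n+1/2}\cdot(\bE^n+\bE^{n+1})/2\dd\bx$ appears with opposite signs in the kinetic and field balances. If, for instance, $\bJ^{n+1/2}$ were replaced by $\rho^{n+1,*}\bu^{n+1,*}$ or $\rho^{n+1}\bu^{n+1}$, or $\bE$ were taken one-sided in \eqref{eq:scheme_1_u}, the cancellation would break.
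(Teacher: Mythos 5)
Your proposal is correct and follows essentially the same route as the paper's proof: kinetic energy conservation in the convection step via periodicity, invariance of $\rho$ and $\mT$ in the Lorentz step, testing \eqref{eq:scheme_1_u} against the velocity average so the $\bu\times\bB$ term drops out, and pairing Maxwell's equations with the field averages so the curl terms cancel and the $\bJ^{n+1/2}\cdot(\bE^n+\bE^{n+1})$ contributions annihilate. The only cosmetic difference is that you invoke the adjointness $\int_\Omega\bE\cdot(\nabla_{\bx}\times\bB)\dd\bx=\int_\Omega\bB\cdot(\nabla_{\bx}\times\bE)\dd\bx$ where the paper writes the same cancellation as the integral of $\nabla_{\bx}\cdot[(\bB^{n+1}+\bB^n)\times(\bE^{n+1}+\bE^n)]$.
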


\begin{proof}
Define the kinetic energy $\mE^n_K$, electric energy $\mE_E^n$ and magnetic energy $\mE_B^n$ at time $t^n$ as 
\begin{equation}
  \label{eq:semi_KEB}
  \mE^n_K = \frac{1}{2} \int_{\Omega} \rho^n (|\bu^n|^2 + 3 \mT^n) \dd \bx, \qquad \mE_E^n = \frac{1}{2} \int_{\Omega} |\bE^n|^2 \dd \bx, \qquad \mE_B^n = \frac{1}{2} \int_{\Omega} |\bB^n|^2 \dd \bx.
\end{equation}
It follows from \eqref{eq:macro} that the kinetic energy $\mE^n_K$ can also be written as 
\begin{equation}
  \label{eq:semi_K}
  \mE^n_K = \frac{1}{2} \int_{\Omega} \int_{\bbR^3}|\bv|^2 f^n \dd \bv \dd \bx. 
\end{equation}

 For the convection step, multiplying \eqref{eq:scheme_1_con} with $|\bv|^2$ and
 integrating with respect to $\bv$ and $\bx$ over $\Omega\times\mathbb{R}^3$, yielding
\begin{align}
\label{eq:prove_con_1}
\begin{split}
 \int_{\bbR^3}\int_{\Omega} \dfrac{f^{n+1,\ast} - f^n}{\Delta t} |\bv|^2 \dd \bx \dd \bv 
&= - \int_{\bbR^3}\int_{\Omega} \bv \cdot \nabla_{\bx} f^n |\bv|^2 \dd \bx \dd \bv \\&= -\int_{\bbR^3} |\bv|^2 \bv \cdot \int_{\Omega} \nabla_{\bx} f^n\dd \bx \dd \bv=0,
\end{split}
\end{align}
where the last equality is due to the periodic boundary conditions in $\bm x$. Therefore,
\begin{equation}
  \label{eq:prove_con_3}
  \mE_K^{n+1, \ast} = \mE_K^{n}. 
\end{equation}

For the Lorentz force step, the thermal energy is unchanged according to \eqref{eq:rho_T_n+1}, i.e.,
\begin{equation}
  \frac{3}{2} \int_{\Omega} \rho^{n+1} \mT^{n+1} \dd \bx=
  \frac{3}{2} \int_{\Omega} \rho^{n+1,\ast} \mT^{n+1,\ast} \dd \bx.
  \label{eq:prove_lor_T}
\end{equation}
On the other hand, multiplying \eqref{eq:scheme_1_B} with $\bB^{n+1} + \bB^n$, \eqref{eq:scheme_1_E} with $\bE^{n+1} + \bE^{n}$, \eqref{eq:scheme_1_u} with $\rho^{n+1}(\bu^{n+1} + \bu^{n+1, \ast})$, and integrating with respect to $\bx$ over $\Omega$, we obtain 
{\small 
\begin{equation}
  \label{eq:prove_lor_1}
  \begin{aligned}
    & \int_{\Omega} \dfrac{|\bB^{n+1}|^2 - |\bB^{n}|^2 }{\Delta t} \dd \bx = - \int_{\Omega} \nabla_{\bx} \times \frac{\bE^{n+1} + \bE^n}{2} \cdot (\bB^{n+1} + \bB^{n})\dd \bx, \\
    & \int_{\Omega} \dfrac{|\bE^{n+1}|^2 - |\bE^{n}|^2 }{\Delta t} \dd \bx = \int_{\Omega} \nabla_{\bx} \times \frac{\bB^{n+1} + \bB^n}{2} \cdot (\bE^{n+1} + \bE^{n})\dd \bx - \int_{\Omega}\bJ^{n+1/2} \cdot (\bE^{n+1} + \bE^{n})\dd \bx, \\
    & \int_{\Omega} \dfrac{\rho^{n+1}(|\bu^{n+1}|^2 - |\bu^{n+1,\ast}|^2)}{\Delta t} \dd \bx = \int_{\Omega}\rho^{n+1} \frac{\bE^{n+1} + \bE^n}{2} \cdot (\bu^{n+1} + \bu^{n+1,\ast})\dd \bx \\
     & \hspace{5cm} + 
     \int_{\Omega}\rho^{n+1}\frac{(\bu^{n+1} + \bu^{n+1,\ast})}{2} \times \frac{\bB^{n+1} + \bB^n}{2} \cdot (\bu^{n+1} + \bu^{n+1,\ast})\dd \bx.
  \end{aligned}
\end{equation}
}
Summing up \eqref{eq:prove_lor_T} and \eqref{eq:prove_lor_1}, together with \eqref{eq:rho_T_n+1} and \eqref{eq:scheme1_J_final}, we have
\begin{equation}
  \label{eq:prove_lor_2}
\begin{aligned}
  &(\mE^{n+1}_K + \mE^{n+1}_E + \mE^{n+1}_B) - (\mE^{n+1, \ast}_K + \mE^{n}_E + \mE^n_B)
  \\ & \qquad =\frac{\Delta t}{2} \int_{\Omega} \nabla_{\bx} \times (\bB^{n+1} + \bB^n) \cdot (\bE^{n+1} + \bE^{n})
  -\nabla_{\bx} \times(\bE^{n+1} + \bE^{n})\cdot
  (\bB^{n+1} + \bB^n)
  \dd \bx \\
  & \qquad = \frac{\Delta t}{2}\int_{\Omega} \nabla_{\bx} \cdot [(\bB^{n+1} + \bB^n) \times (\bE^{n+1} + \bE^{n})] \dd \bx=0,
  \end{aligned}
\end{equation}
 where the last equality holds due to the periodic boundary conditions in $\bm x$.
We complete the proof by combining \eqref{eq:prove_lor_2} with \eqref{eq:prove_con_3}, yielding
\begin{equation}
  \label{eq:prove_lor_4}
  \mEt^{n+1} \triangleq (\mE^{n+1}_K + \mE^{n+1}_E + \mE^{n+1}_B) = (\mE^{n+1, \ast}_K + \mE^{n}_E + \mE^n_B) \triangleq \mEt^{n}.
\end{equation}
\end{proof}

From this theorem, we can see that {\bf Scheme-I} exactly preserves the total energy. This scheme can potentially work for the VM system when stiffness occurs in the electromagnetic field, since \eqref{eq:scheme_1_B}, \eqref{eq:scheme_1_E} and \eqref{eq:scheme_1_u} are formulated using the implicit midpoint method on $(\bm E,\bm B,\bm u)$. However, the computation of this scheme is demanding and requires inversion of a nonlinear high-dimensional coupled system. To improve the efficiency and reduce computational cost, we modify {\bf Scheme-I} by advancing the magnetic field explicitly,  and denote the resulting scheme as {\bf Scheme-II}, which is designed as follows:

\paragraph{\underline{Scheme-II: Explicit for the magnetic field}}
\begin{subequations}
\label{eq:scheme_2}
	\begin{align}
	\label{eq:scheme_2_con}
		&\frac{f^{n+1,*}-f^{n} }{\Delta t}=-\bm{v}\cdot\nabla_{\bm{x}}f^n,\\
		\label{eq:scheme_2_B1}
		&\frac{\bm{B}^{n+1/2}-\bm{B}^n}{\Delta t /2}=-\nabla_{\bm{x}} \times \bm{E}^n,\\	
		\label{eq:scheme_2_E}
		&\frac{\bm{E}^{n+1}-\bm{E}^n}{\Delta t}= \nabla_{\bm{x}}\times\bm{B}^{n+1/2}-\bm{J}^{n+1/2},\\
		 \label{eq:scheme_2_B2}
		&\frac{\bm{B}^{n+1}-\bm{B}^{n+1/2}}{\Delta t /2}=-\nabla_{\bm{x}} \times \bm{E}^{n+1}, \\
		\label{eq:scheme_2_u}
		&\frac{\bm{u}^{n+1}-\bm{u}^{n+1,*}}{\Delta t}\ \ =\frac{\bm{E}^{n}+\bm{E}^{n+1}}{2}
		+\frac{\bm{u}^{n+1,*}+\bm{u}^{n+1}}{2}\times \bm{B}^{n+1/2},\\
		\label{eq:scheme_2_f}
		&
		\frac{f_{\alpha}^{n+1}-f_{\alpha}^{n+1,*}}{\Delta t}=
		\sum_{d,l,m=1}^{3} \epsilon_{d l m}(\alpha_l+1)B_m^{n+1/2}
		 f^{n+1,*}_{\alpha-e_d+e_l}, \qquad 2\leqslant \vert \alpha \vert\leqslant M,
	\end{align}
\end{subequations}
where $\bJ^{n+1/2}$ is defined the same as in \eqref{eq:scheme1_J}.  $\rho^{n+1,\ast},\ \bu^{n+1,\ast}$ are connected with $f^{n+1,*}$ in \eqref{eq:scheme_2_con} and \eqref{eq:macro}. \eqref{eq:rho_T_n+1} and \eqref{eq:scheme1_J_final} also hold here. 

\begin{theorem}[Total energy conservation of {\bf Scheme-II}] 
\label{thm:energy_2}
  {\bf Scheme-II} preserves the discrete total energy
	$\mEt^n=\mEt^{n+1}$ of the VM system in Sec. \ref{sec:FPL} with periodic boundary conditions in $\bm x$, where
\begin{equation}
   \label{eq:semi_dis_energy_2} 
  \mEt^n = \frac{1}{2} \int_{\Omega} \rho^n (|\bu^n|^2 + 3 \mT^n) \dd \bx + \frac{1}{2} \int_{\Omega} |\bE^n|^2 + \bB^{n+1/2} \cdot \bB^{n-1/2}\dd \bx. 
 \end{equation}
\end{theorem}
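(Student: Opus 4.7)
The plan is to mimic the proof of Theorem \ref{thm:energy_1} for \textbf{Scheme-I}, tracking how the staggered treatment of $\bm{B}$ in \textbf{Scheme-II} forces the replacement of $|\bm{B}^n|^2$ by $\bm{B}^{n+1/2}\cdot \bm{B}^{n-1/2}$ in the discrete total energy. The convection step \eqref{eq:scheme_2_con} is identical in form to \eqref{eq:scheme_1_con}, so the argument in \eqref{eq:prove_con_1}--\eqref{eq:prove_con_3} gives $\mE_K^{n+1,\ast}=\mE_K^n$. Since $\rho$ and $\mT$ are unchanged through the Lorentz step by \eqref{eq:rho_T_n+1}, the thermal part of $\mE_K$ drops out and I only need to balance $\frac{1}{2}\int_\Omega \rho|\bu|^2\dd\bx$ against the electromagnetic contribution.

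For the kinetic--electric balance, I would dot \eqref{eq:scheme_2_u} with $\rho^{n+1}(\bu^{n+1}+\bu^{n+1,\ast})$ and integrate: since $(\bu^{n+1}+\bu^{n+1,\ast})\times \bm{B}^{n+1/2}$ is pointwise orthogonal to $(\bu^{n+1}+\bu^{n+1,\ast})$, the magnetic-force term vanishes, and \eqref{eq:scheme1_J_final} yields
\begin{equation*}
 \mE_K^{n+1}-\mE_K^{n+1,\ast} = \frac{\Delta t}{2}\int_\Omega (\bE^n+\bE^{n+1})\cdot \bJ^{n+1/2}\dd\bx.
\end{equation*}
Dotting \eqref{eq:scheme_2_E} with $\bE^n+\bE^{n+1}$ and integrating produces the expected identity for $\mE_E^{n+1}-\mE_E^n$ containing $-\frac{\Delta t}{2}\int_\Omega \bJ^{n+1/2}\cdot(\bE^n+\bE^{n+1})\dd\bx$ (which exactly cancels the kinetic contribution) together with a leftover curl term $\frac{\Delta t}{2}\int_\Omega \nabla_\bx\times \bm{B}^{n+1/2}\cdot(\bE^n+\bE^{n+1})\dd\bx$.

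The heart of the proof is to show that this leftover curl term is absorbed by the change in $\frac{1}{2}\int_\Omega \bm{B}^{n+1/2}\cdot \bm{B}^{n-1/2}\dd\bx$. Composing \eqref{eq:scheme_2_B1} at the current Strang step with \eqref{eq:scheme_2_B2} from the previous one produces the leapfrog identity $\bm{B}^{n+1/2}-\bm{B}^{n-1/2}=-\Delta t\,\nabla_\bx\times\bE^n$, and analogously $\bm{B}^{n+3/2}-\bm{B}^{n+1/2}=-\Delta t\,\nabla_\bx\times\bE^{n+1}$ at the next Strang step. Adding and multiplying by $\bm{B}^{n+1/2}$ gives
\begin{equation*}
 \bm{B}^{n+3/2}\cdot\bm{B}^{n+1/2}-\bm{B}^{n+1/2}\cdot\bm{B}^{n-1/2} = -\Delta t\,\bm{B}^{n+1/2}\cdot\nabla_\bx\times(\bE^n+\bE^{n+1}).
\end{equation*}
Integration by parts under periodic boundary conditions, via $\nabla_\bx\cdot[(\bE^n+\bE^{n+1})\times\bm{B}^{n+1/2}]=0$ as in \eqref{eq:prove_lor_2}, transfers the curl onto $\bm{B}^{n+1/2}$ and produces exactly the negative of the leftover curl term, so the three contributions telescope to $\mEt^{n+1}=\mEt^n$.

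The main obstacle I anticipate is the bookkeeping around the staggered field: $\bm{B}^{n+3/2}$ appearing in $\mEt^{n+1}$ is produced only by \eqref{eq:scheme_2_B1} of the \emph{next} Strang step, so the leapfrog identity must be understood as gluing together half-steps drawn from two consecutive time steps. Once this staggered viewpoint is fixed, the remainder is a direct adaptation of the \textbf{Scheme-I} calculation, with the implicit midpoint $(\bm{B}^n+\bm{B}^{n+1})/2$ replaced throughout by the single staggered value $\bm{B}^{n+1/2}$.
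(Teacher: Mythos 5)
Your proposal is correct and follows essentially the same route as the paper's proof: the identical convection-step and thermal-energy arguments, the leapfrog identities $\bm{B}^{n+1/2}-\bm{B}^{n-1/2}=-\Delta t\,\nabla_{\bm{x}}\times\bm{E}^{n}$ and $\bm{B}^{n+3/2}-\bm{B}^{n+1/2}=-\Delta t\,\nabla_{\bm{x}}\times\bm{E}^{n+1}$ obtained by gluing the two half-steps from consecutive time levels, the pairing of these with $\bm{B}^{n+1/2}$ to produce the staggered magnetic energy $\frac{1}{2}\int_{\Omega}\bm{B}^{n+1/2}\cdot\bm{B}^{n-1/2}\dd\bm{x}$, and the cancellation of the remaining curl terms via the divergence of a cross product under periodic boundary conditions. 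The staggered-index bookkeeping you flag as the main obstacle is exactly the point the paper handles by writing \eqref{eq:scheme_2_B1} at time level $n+1$ and combining it with \eqref{eq:scheme_2_B2}.
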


\begin{proof}
The change of the kinetic energy in the convection step and the 
thermal energy in the Lorentz force step are the same as \eqref{eq:prove_con_3} and \eqref{eq:prove_lor_T} in Theorem \ref{thm:energy_1}. 
For the Lorentz force step, at time level $n+1$, \eqref{eq:scheme_2_B1} becomes
\begin{equation}
  \label{eq:prove_thm2_1}
  \frac{\bm{B}^{n+3/2}-\bm{B}^{n+1}}{\Delta t /2}=-\nabla_{\bm{x}} \times \bm{E}^{n+1},
\end{equation}
which, combining with \eqref{eq:scheme_2_B2}, yields
\begin{equation}
  \label{eq:prove_thm2_2}
  \frac{\bm{B}^{n+3/2}-\bm{B}^{n+1/2}}{\Delta t}=-\nabla_{\bm{x}} \times \bm{E}^{n+1}, \qquad \frac{\bm{B}^{n+1/2}-\bm{B}^{n-1/2}}{\Delta t}=-\nabla_{\bm{x}} \times \bm{E}^{n}. 
\end{equation}
Multiplying \eqref{eq:prove_thm2_2} with $\bB^{n+1/2}$, \eqref{eq:scheme_2_E} with $\bE^{n+1} + \bE^{n}$, \eqref{eq:scheme_2_u} with $\rho^{n+1}(\bu^{n+1} + \bu^{n+1, \ast})$, and integrating with respect to $\bx$ over $\Omega$, we obtain
{\small 
\begin{equation}
  \label{eq:prove_thm2_3}
  \begin{aligned}
    & \int_{\Omega} \dfrac{(\bB^{n+3/2} - \bB^{n+1/2}) \cdot \bB^{n+1/2}}{\Delta t} \dd \bx = - \int_{\Omega} \nabla_{\bx} \times \bE^{n+1} \cdot \bB^{n+1/2}\dd \bx, \\
    & \int_{\Omega} \dfrac{(\bB^{n+1/2} - \bB^{n-1/2}) \cdot \bB^{n+1/2}}{\Delta t} \dd \bx = - \int_{\Omega} \nabla_{\bx} \times \bE^{n} \cdot \bB^{n+1/2}\dd \bx, \\
    & \int_{\Omega} \dfrac{|\bE^{n+1}|^2 - |\bE^{n}|^2 }{\Delta t} \dd \bx = \int_{\Omega} \nabla_{\bx} \times \bB^{n+1/2} \cdot (\bE^{n+1} + \bE^{n})\dd \bx - \int_{\Omega}\bJ^{n+1/2} \cdot (\bE^{n+1} + \bE^{n})\dd \bx, \\
    & \int_{\Omega} \dfrac{\rho^{n+1}(|\bu^{n+1}|^2 - |\bu^{n+1,\ast}|^2)}{\Delta t} \dd \bx = \int_{\Omega}\rho^{n+1} \frac{\bE^{n+1} + \bE^n}{2} \cdot (\bu^{n+1} + \bu^{n+1,\ast})\dd \bx \\
     & \hspace{5cm} + 
     \int_{\Omega}\rho^{n+1}\frac{(\bu^{n+1} + \bu^{n+1,\ast})}{2} \times \bB^{n+1/2} \cdot (\bu^{n+1} + \bu^{n+1,\ast})\dd \bx.
  \end{aligned}
\end{equation}
}
Define the magnetic engery $ \mE^{n+1}_B$ as 
\begin{equation}
  \label{eq:prove_thm2_5}
   \mE^{n}_B = \frac{1}{2} \int_{\Omega} \bB^{n+1/2} \cdot \bB^{n-1/2}\dd \bx.
\end{equation}
 Summing up \eqref{eq:prove_thm2_3} and \eqref{eq:prove_lor_T}, together with \eqref{eq:rho_T_n+1} and \eqref{eq:scheme1_J_final}, we have
\begin{equation}
  \label{eq:prove_thm2_4}
\begin{aligned}
  &(\mE^{n+1}_K + \mE^{n+1}_E + \mE^{n+1}_B) - (\mE^{n+1, \ast}_K + \mE^{n}_E + \mE^n_B)\\ 
  & \qquad = \Delta t \int_{\Omega} \nabla_{\bx} \times \bB^{n+1/2} \cdot (\bE^{n+1} + \bE^{n})
  -\nabla_{\bx} \times(\bE^{n+1} + \bE^{n})\cdot
  \bB^{n+1/2}
  \dd \bx \\
  & \qquad =\Delta t\int_{\Omega} \nabla_{\bx} \cdot[ \bB^{n+1/2} \cdot (\bE^{n+1} + \bE^{n})] \dd \bx = 0,
  \end{aligned}
\end{equation}
where the last equality is due to periodic boundary conditions in $\bm x$.
We complete the proof. 
\end{proof}

Similar as studied in \cite{VM_Zhong}, {\bf Scheme-II} achieves near conservation of the total energy. The total energy \eqref{eq:semi_dis_energy_2} of 
{\bf Scheme-II} is a modified  version of the exact total energy defined in
\eqref{eq:semi_dis_energy_1}. This ensures that over the long run, the numerical energy will not deviate much from its actual value. Total energy conservation is preserved with a suitable time step size in the numerical simulations.

It is worth mentioning that although {\bf Scheme-II} is formulated by using the leap frog method for Maxwell's equations in \eqref{eq:scheme_2_B1}-\eqref{eq:scheme_2_B2},  $\bE^{n+1}$ and $\bu^{n+1}$ are advanced implicitly via \eqref{eq:scheme_1_E} and \eqref{eq:scheme_2_u}, since $\bm J^{n+1/2}$ defined in \eqref{eq:scheme1_J} involves the information of $\bu^{n+1}$.
Comparing with {\bf Scheme-I}, {\bf Scheme-II} deals with a smaller linear system, where the matrix form for updating $\bu$ and $\bE$ to the next time level is given by
{\small
\begin{equation}
\renewcommand\arraystretch{1.3}
\label{eq:scheme_matrix}
		\left(I+\frac{\Delta t}{2}
		\begin{bmatrix}
		\begin{array}{cccc|ccc}
		 	& & & & \rho^{n+1}& & \\
			& &\text{\huge 0}& & &\rho^{n+1} & \\
			& & & & &  &\rho^{n+1} \\[1mm] \hline
			&-1 &  & & 0  &-B_3^{n+1/2} &B_2^{n+1/2}\\
			&  &-1 & &  B_3^{n+1/2} &0  &-B_1^{n+1/2}\\
			&  &  &-1&  -B_2^{n+1/2}&B_1^{n+1/2} &0
		\end{array}
	 \end{bmatrix} 
		\right)
		\begin{bmatrix}
			E_1^{n+1}\\
			E_2^{n+1}\\
			E_3^{n+1}\\
			u_1^{n+1}\\
			u_2^{n+1}\\
			u_3^{n+1}
		\end{bmatrix}
		= {\rm RHS}^{n+1,\ast}.
\end{equation}
}

\begin{remark}
By setting $\bB=\bm 0$ in {\bf Scheme-I} and {\bf Scheme-II}, both energy-preserving schemes can be applied to the VA system \eqref{eq:vp} in the framework of moment methods, given as follows
\begin{subequations}
\label{eq:scheme_3}
	\begin{align}
	\label{eq:scheme_3_con}
		&\frac{f^{n+1,*}-f^{n} }{\Delta t}=-\bm{v}\cdot\nabla_{\bm{x}}f^n,\\
		\label{eq:scheme_3_E}
		&\frac{\bm{E}^{n+1}-\bm{E}^n}{\Delta t}= -\bm{J}^{n+1/2},\\
		\label{eq:scheme_3_u}
		&\frac{\bm{u}^{n+1}-\bm{u}^{n+1,*}}{\Delta t}\ \ =\frac{\bm{E}^{n}+\bm{E}^{n+1}}{2}.
	\end{align}
\end{subequations}
\end{remark}

\subsection{Fully discrete schemes and their properties}
\label{sec:full}
In this section, we formulate the fully discrete schemes and discuss their conservation properties.
\subsubsection{Fully discrete schemes}
In this section, we describe the details of the spatial discretization coupling with the temporal discretization   {\bf Scheme-II} to formulate the fully discrete scheme. All the discussions can be applied to {\bf Scheme-I} as well.

First, the spatial domain $\Omega\subset\mathbb{R}^3$ is taken a uniform partition into cubic meshes $\{T_{\bj}\}$ with $\bj = (j_1, j_2, j_3)$. Denote the mesh size as $\Delta \bx = (\Delta x_1, \Delta x_2, \Delta x_3)$. Let $f_{\bj}^n(\bv)$ denote the numerical approximation to $f(t,\bx, \bv)$ in the mesh $T_{\bj}$ at time $t^n$. 
The Hermite expansion for $f_{\bj}^n(\bv)$ is 
\begin{equation}
  f^{n}_{\bm{j}}(t,\bm{v})
  =\sum_{|\alpha|\leqslant M}
  f_{\alpha, \bj}^n
  \mathcal{H}_{\mathcal{T}_{\bm{j}}^n,\alpha}
  \left(  \frac{\bm{v}-\bm{u}^n_{\bm{j}}
  }{\sqrt{\mathcal{T}_{\bm{j}}^n}} \right).
\end{equation}
We further denote $\bm{E}_{\bm{j}}^{n}$ and $\bB_{\bj}^n$ as the numerical approximations to the electric field $\bE$ and the magnetic field $\bB$ in the mesh $T_{\bm{j}}$ at time $t^n$, respectively.

For the convection step, we follow the numerical scheme in \cite{Vlasov2012, wang2017VPFP}  with the fully discrete scheme for $f^n_{\bj}$ given by 
\begin{equation}
\label{eq:update_f}
\frac{f^{n+1,*}_{\bm{j}}-f^{n}_{\bm{j}}}{\Delta t}=-\sum_{d=1}^{3}\frac{1}{\Delta x_d}
\left(F^n_{\bm{j}+e_d/2}- F^n_{\bm{j}-e_d/2}\right)
-\sum_{d=1}^{3}\frac{1}{\Delta x_d}
\left(R^{n-}_{\bm{j}+e_d/2}- R^{n+}_{\bm{j}-e_d/2}\right),
\end{equation}
with $F^n_{\bj \pm e_d/2}$ and $R^{n\pm}_{\bm{j}+e_d/2}$ being the numerical fluxes.
For the non-conserved flux $R^{n\pm}_{\bm{j}+e_d/2}$, we adopt the same recipe in \cite{wang2017VPFP} and thus omit the details here. For the flux $F^n_{\bj \pm e_d/2}$, we adopt the HLL flux 
{\small 
\begin{equation}
\label{eq:hll_flux}
	F^n_{\bm{j}+e_d/2}=
	\left\{
	\begin{array}{ll}
	 	v_d f_{\bm{j}+e_d/2}^{n,(L)}, &\lambda^L_{\bm{j}+e_d/2}\geqslant 0,\\[2mm]
		\frac{
		\lambda^R_{\bm{j}+e_d/2}v_d f_{\bj+e_d/2}^{n,(L)} - 
			\lambda^L_{\bm{j}+e_d/2}v_d f_{\bj+e_d/2}^{n,(R)} + 
				\lambda^L_{\bm{j}+e_d/2}\lambda^R_{\bm{j}+e_d/2}( f_{\bj+e_d/2}^{n,(R)} - f_{\bj+e_d/2}^{n,(L)})
		}{\lambda^R_{\bm{j}+e_d/2} -\lambda^L_{\bm{j}+e_d/2}}, 
		& \lambda^L_{\bm{j}+e_d/2}<0<\lambda^R_{\bm{j}+e_d/2},\\[2mm]
		v_d f_{\bm{j}+e_d/2}^{n,(R)}, &\lambda^R_{\bm{j}+e_d/2}\leqslant 0,
	\end{array}
	\right.
\end{equation}
}
where $f_{\bm{j}+e_d/2}^{n,(L)}$ and $f_{\bm{j}+e_d/2}^{n,(R)}$ are the linear reconstruction of $f_{\bj}^n$
\begin{equation}
  \label{eq:recon}
  f_{\bm{j}+e_d/2}^{n,(L)}=
  f^n_{\bm{j}}+\frac{\Delta x_d}{2}\frac{f^n_{\bm{j}+e_d}- f^n_{\bm{j}-e_d}}{2\Delta x_d},\qquad 
  f_{\bm{j}-e_d/2}^{n,(R)}=
  f^n_{\bm{j}}-\frac{\Delta x_d}{2}\frac{f^n_{\bm{j}+e_d}- f^n_{\bm{j}-e_d}}{2\Delta x_d}.
\end{equation}
Here $\lambda^L_{\bm{j}+e_d/2}$, $\lambda^R_{\bm{j}+e_d/2}$ are the minimum and maximum characteristic velocities of the moment system, given by
\begin{equation}
\label{eq:lambda}
\begin{aligned} 
\lambda^L_{\bm{j}+e_d/2} &=\min\left\{ u_{d, \bm{j}}-\text{C}_{M+1}\sqrt{\mathcal{T}^n_{\bm{j}}},
			           u_{d, \bm{j}+e_d}-\text{C}_{M+1}\sqrt{\mathcal{T}^n_{\bm{j}+e_d}}
			 \right\}, \\
\lambda^R_{\bm{j}+e_d/2}&=\max\left\{ u_{d, \bm{j}}+\text{C}_{M+1}\sqrt{\mathcal{T}^n_{\bm{j}}},
			u_{d, \bm{j}+e_d}+\text{C}_{M+1}\sqrt{\mathcal{T}^n_{\bm{j}+e_d}} 
			 \right\},
\end{aligned}
\end{equation}	
 where $\text{C}_{M+1}$ is the largest root of the Hermite polynomial $\He_{M+1}(x)$. 

For the Lorentz force step and Maxwell’s equations, we apply the central finite difference scheme to discretize the spatial variable in 
\eqref{eq:scheme_2_B1}-\eqref{eq:scheme_2_f} of {\bf Scheme-II}, yielding
\begin{subequations}
  \label{eq:full_Lorntz}
  \begin{align}
  \label{eq:full_B1}
  & \frac{ \bm{B}^{n+1/2}_{\bm{j}} - \bm{B}^{n}_{\bm{j}} }
  {\Delta t / 2}
  =- \Pi_{\bx} \bE_{\bj}^{n}, \\
  \label{eq:full_E}
  & \frac{ \bm{E}^{n+1}_{\bm{j}} - \bm{E}^{n}_{\bm{j}} }
  {\Delta t}
  = \Pi_{\bx} \bB_{\bj}^{n+1/2} - \bJ_{\bm{j}}^{n+1/2}, \\
  \label{eq:full_B2}
  &\frac{ \bm{B}^{n+1}_{\bm{j}} - \bm{B}^{n+1/2}_{\bm{j}} }
  {\Delta t / 2}
  =- \Pi_{\bx} \bE_{\bj}^{n+1}, \\
  \label{eq:full_u}
  &\frac{\bu_{\bj}^{n+1} - \bu_{\bj}^{n+1,\ast}}{\Delta t} = \frac{\bE^n_{\bj} + \bE^{n+1}_{\bj}}{2} + \frac{\bu_{\bj}^{n+1, \ast} + \bu_{\bj}^{n+1}}{2} \times \bB^{n+1/2}_{\bj}, \\
  \label{eq:update_f_high}
  & \frac{f_{\alpha, \bj}^{n+1} - f_{\alpha, \bj}^{n+1, \ast}}{\Delta t} = 	\sum_{d,l,m=1}^{3} \epsilon_{d l m}(\alpha_l+1)B_{m,\bm{j}}^{n+1/2}
		 f^{n+1,\ast}_{\alpha-e_d+e_l,\bm{j}}, \qquad 2\leqslant \vert \alpha \vert\leqslant M, 
  \end{align}
\end{subequations}
where $\Pi_{\bx} \cdot$ is the discretization of the curl operator $\nabla_{\bx} \times \cdot $, defined as
\begin{equation}
\label{eq:operator_Pi}
  \Pi_{\bx}\bm{S}_{\bj} = \left|\begin{array}{cccc} 
  \vec{j}_1 &  \vec{j}_2  & \vec{j}_3 \\ 
   {\mathrm D}_1  & {\mathrm D}_2 & {\mathrm D}_3 \\ 
  S_{1,\bm{j}} & S_{2, \bm{j}} & S_{3, \bm{j}}
\end{array}\right|, \qquad {\mathrm D}_d S_{\bj} = \frac{S_{\bj+e_d} - S_{\bj-e_d}}{2\Delta x_d}, \qquad d = 1,2,3.
\end{equation}

The time step $\Delta t$ of the above fully discrete scheme should satisfy the following CFL condition
\begin{equation}
\label{eq:CFL}
	\text{CFL}=\frac{\Delta t^n}{\Delta x_d}\max_{\bm{j}} \left\{ \left|\lambda^R_{\bm{j}+e_d/2}\right|,\left|\lambda^L_{\bm{j}+e_d/2}\right|\right\}<1,\quad d=1,2,3,
\end{equation} 
with $\lambda^L_{\bm{j}+e_d/2}$, $\lambda^R_{\bm{j}+e_d/2}$ given in \eqref{eq:lambda}.
\subsubsection{Outline of the algorithm}
The overall numerical scheme is summarized as follows:
\begin{enumerate}
  \item  Let $n = 0$ and set the initial value of $f_{\alpha,\bj}^0$, $\bE_{\bj}^0$, and $\bB_{\bj}^0$;
  \item Set $\Delta t^n$ according to the CFL condition \eqref{eq:CFL};
  \item Update the convection term to obtain $f_{\bj}^{n+1,\ast}$ using \eqref{eq:update_f};
  \item Obtain $\rho_{\bj}^{n+1, \ast}$, $\mT_{\bj}^{n+1,\ast}$ and $\bu_{\bj}^{n+1,\ast}$ using \eqref{eq:rho} and \eqref{eq:macro};
  \item Update $\bu_{\bj}^{n+1}$, $\bE_{\bj}^{n+1}$, $\bB_{\bj}^{n+1}$ and
  $f_{\alpha,\bj}^{n+1}$ using \eqref{eq:full_Lorntz};
  \item Reset the expansion center and scaling factor in the mesh $T_{\bj}$ with $\bu_{\bj}^{n+1}$ and $\mT_{\bj}^{n+1}$;
  \item Project $f_{\bj}^{n+1}$ to the functional space with expansion center $\bu_{\bj}^{n+1}$ and $\mT_{\bj}^{n+1}$;
  \item Let $n\leftarrow n+1$, and return to Step 2. 
\end{enumerate}

In the framework of the regularized moment method, it is restricted that the expansion center should be the local macroscopic velocity and the scaling factor should be the local temperature. Therefore, in Step 7 of the algorithm, the distribution function is projected into the corresponding space. The total computational cost of this projection is $\mathcal{O}(M^3)$, see e.g. \cite{Vlasov2012} for more details. 

\subsubsection{Conservation properties of fully discrete schemes}
In this subsection, 
 we present conservation properties of fully discrete schemes and defer a rigorous proof of these properties.


\begin{theorem}[Mass conservation]
\label{thm:mass_full}
The fully discrete scheme \eqref{eq:update_f} and \eqref{eq:full_Lorntz}  preserves the discrete mass  of the VM system with periodic boundary conditions in $\bx$, i.e.
\begin{equation}
   \label{eq:proof_mass_1}
   \mP_h^{n+1} = \mP_h^{n},\quad\text{with}\quad \mP_h^n = \sum_{\bj \in \bJ} \Delta V \rho_{\bj}^n,
  \end{equation}
  where $\Delta V = \prod_{d=1}^3 \Delta x_d$ is the mesh volume and $\bJ$ is the total set of mesh indexes. This also holds for the full discrete scheme with time integrators {\bf Scheme-I}.
\end{theorem}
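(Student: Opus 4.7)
The plan is to split the update from $t^n$ to $t^{n+1}$ into the convection substep \eqref{eq:update_f}, which produces the intermediate state $f_{\bj}^{n+1,\ast}$ (and hence $\rho_{\bj}^{n+1,\ast}$), followed by the Lorentz substep \eqref{eq:full_Lorntz}, and to establish $\mP_h^{n+1,\ast}=\mP_h^n$ and $\mP_h^{n+1}=\mP_h^{n+1,\ast}$ separately. Chaining the two identities gives the claim.

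First I would handle the Lorentz substep, which is essentially free. Equations \eqref{eq:full_B1}--\eqref{eq:full_u} govern $\bE_{\bj}$, $\bB_{\bj}$, $\bu_{\bj}$ and do not touch $\rho_{\bj}$, while \eqref{eq:update_f_high} is written only for $|\alpha|\geqslant 2$, so the density moment $\rho_{\bj}=f_{\bm 0,\bj}$ is left unchanged pointwise. This is the discrete counterpart of \eqref{eq:rho_T_n+1}; summation over $\bj$ yields $\mP_h^{n+1}=\mP_h^{n+1,\ast}$ immediately.

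Next I would project \eqref{eq:update_f} onto its $\alpha=\bm 0$ component. The crucial observation is that the regularization contribution $\mR_M^j(\alpha)$ in \eqref{eq:global} is supported on $|\alpha|=M$, so the non-conservative fluxes $R^{n\pm}_{\bj\pm e_d/2}$ in \eqref{eq:update_f} do not enter the density equation. Moreover, the HLL flux \eqref{eq:hll_flux}--\eqref{eq:recon}, read moment by moment, yields a two-point conservative numerical flux $\tilde F^n_{\bj+e_d/2}$ in the density variable, depending only on data in cells $\bj$ and $\bj+e_d$. Hence
\begin{equation*}
  \frac{\rho_{\bj}^{n+1,\ast}-\rho_{\bj}^n}{\Delta t}=-\sum_{d=1}^3\frac{1}{\Delta x_d}\bigl(\tilde F^n_{\bj+e_d/2}-\tilde F^n_{\bj-e_d/2}\bigr).
\end{equation*}
Multiplying by $\Delta V$ and summing over $\bj\in\bJ$, the differences telescope in each direction, and under periodic boundary conditions in $\bx$ the boundary contributions cancel pairwise, giving $\sum_{\bj}\Delta V\,(\rho_{\bj}^{n+1,\ast}-\rho_{\bj}^n)=0$.

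The only delicate point is the bookkeeping behind the second step: one has to verify explicitly that the $\alpha=\bm 0$ row of the globally hyperbolic system \eqref{eq:regularization} remains in conservative form (so that the HLL formula on $f$ reduces to a genuine conservative flux for $\rho$), and that the non-conservative correction $\mR_M^j$ indeed vanishes on this row. Once that is in place, telescoping and periodicity close the argument. The same proof applies verbatim to {\bf Scheme-I}, since its Lorentz step \eqref{eq:scheme_1_B}--\eqref{eq:scheme_1_f} also leaves $\rho_{\bj}$ invariant cellwise, so mass conservation follows from the identical convection-step argument.
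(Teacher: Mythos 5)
The paper does not actually write out a proof of this theorem; it defers entirely to the mass-conservation argument of the earlier VP paper \cite{Vlasov2012}. Your two-step decomposition --- the Lorentz substep \eqref{eq:full_Lorntz} never updates the zeroth moment (it only touches $\bE$, $\bB$, $\bu$ and the coefficients with $|\alpha|\geqslant 2$, consistent with \eqref{eq:rho_T_n+1}), and the convection substep \eqref{eq:update_f} is a flux-difference update whose zeroth moment telescopes under periodic boundary conditions --- is exactly that standard argument and is correct in outline. One imprecision is worth fixing: you attribute the harmlessness of the non-conserved fluxes $R^{n\pm}_{\bj\pm e_d/2}$ to the regularization $\mathcal{R}_M^j(\alpha)$ being supported on $|\alpha|=M$. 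In fact those fluxes are not only the regularization; following the recipe of \cite{wang2017VPFP} they also carry the non-conservative contributions generated by the cell-dependent expansion center $\bu^n_{\bj}$ and scaling $\mT^n_{\bj}$, which is precisely why they are double-valued at an interface. The correct reason they do not disturb the mass balance is that a change of Hermite frame preserves the lower moments, so that $\int_{\bbR^3} R^{n\pm}_{\bj+e_d/2}\dd\bv$ contributes nothing to the density row; this is the verification you rightly flag as the delicate point, and it is the substance of the argument in the reference the paper cites. With that check supplied your proof is complete, and your closing observation that the Lorentz step of {\bf Scheme-I} likewise leaves $f_{\bz}$ invariant cellwise correctly carries the result over to that scheme.
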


\begin{proof}
The proof is similar to  the mass conservation in \cite{Vlasov2012}, and is thus omitted.
\end{proof}

\begin{theorem}[Total energy conservation]
\label{thm:energy_3}
The fully discrete scheme \eqref{eq:update_f} and \eqref{eq:full_Lorntz}  preserves the discrete total energy $\mE_{{\rm total}, h}^{n+1} =
    \mE_{{\rm total}, h}^n$,
    for the VM system with periodic boundary conditions in $\bx$, where
     \begin{align}
     \label{eq:energy_2}
 \mathcal{E}_{{\rm total}, h}^n =
 			\frac{\Delta V}{2}\sum_{\bm{j}\in\bm{J}} \left(\rho_{\bm{j}}^{n}(\bm{u}_{\bm{j}}^{n})^2+3\rho_{\bm{j}}^{n}\mathcal{T}_{\bm{j}}^{n}
 			+(\bm{E}_{\bm{j}}^{n} )^2+\bm{B}_{\bm{j}}^{n-1/2} \cdot \bm{B}_{\bm{j}}^{n+1/2}\right),
 \end{align}
 and for the fully discrete scheme with  time integrator {\bf Scheme-I}, the numerical energy defined as
     \begin{align}
     \label{eq:energy_1}
 \mathcal{E}_{{\rm total}, h}^n =
 			\frac{\Delta V}{2}\sum_{\bm{j}\in\bm{J}} \left(\rho_{\bm{j}}^{n}(\bm{u}_{\bm{j}}^{n})^2+3\rho_{\bm{j}}^{n}\mathcal{T}_{\bm{j}}^{n}
 			+(\bm{E}_{\bm{j}}^{n} )^2+(\bm{B}_{\bm{j}}^{n})^2\right),
 \end{align}
 is also preserved.
\end{theorem}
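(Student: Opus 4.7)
The plan is to mirror the semi-discrete proofs of Theorems \ref{thm:energy_1} and \ref{thm:energy_2}, replacing the spatial integral $\int_{\Omega}\cdot\dd\bx$ by the mesh sum $\sum_{\bj\in\bJ}\Delta V\,\cdot$ and replacing continuous integration by parts by a discrete summation-by-parts (SBP) identity for the central-difference curl operator $\Pi_{\bx}$ defined in \eqref{eq:operator_Pi}. Under periodic boundary conditions in $\bx$, each central difference ${\mathrm D}_d$ is skew-symmetric in the discrete $\ell^2$ inner product on the periodic mesh, from which the key identity
\begin{equation*}
\sum_{\bj\in\bJ}\Delta V\,\bm{A}_{\bj}\cdot\Pi_{\bx}\bm{C}_{\bj}=\sum_{\bj\in\bJ}\Delta V\,\Pi_{\bx}\bm{A}_{\bj}\cdot\bm{C}_{\bj}
\end{equation*}
follows by an index shift. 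This plays the role of $\int_{\Omega}\nabla_{\bx}\cdot(\bm{A}\times\bm{C})\dd\bx=0$ used at the end of the proof of Theorem \ref{thm:energy_2}.

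With this identity in hand, I would split $t^n\to t^{n+1}$ into the convection substep \eqref{eq:update_f} and the Lorentz-Maxwell substep \eqref{eq:full_Lorntz}. For the convection substep, multiplying \eqref{eq:update_f} by $|\bv|^2$, integrating in $\bv$ over $\bbR^3$, and summing over $\bj$, the conservative HLL flux terms in \eqref{eq:hll_flux} telescope under periodicity because $v_d|\bv|^2$ is independent of $\bj$. One must then verify that the regularization fluxes $R^{n\pm}_{\bj\pm e_d/2}$ introduced following \cite{wang2017VPFP} also assemble into a telescoping sum when paired against $|\bv|^2$ via the orthogonality relation \eqref{eq:orth}, yielding $\sum_{\bj}\Delta V\int_{\bbR^3}|\bv|^2 f^{n+1,\ast}_{\bj}\dd\bv=\sum_{\bj}\Delta V\int_{\bbR^3}|\bv|^2 f^n_{\bj}\dd\bv$, the discrete analogue of \eqref{eq:prove_con_3}. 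For the Lorentz-Maxwell substep I would repeat the algebra of Theorem \ref{thm:energy_2} cell by cell: multiply \eqref{eq:full_u} by $\rho_{\bj}^{n+1}(\bu_{\bj}^{n+1}+\bu_{\bj}^{n+1,\ast})$, \eqref{eq:full_E} by $\bE_{\bj}^{n+1}+\bE_{\bj}^n$, and the two-step $\bm{B}$ updates \eqref{eq:full_B1}--\eqref{eq:full_B2} (combined as in \eqref{eq:prove_thm2_2}) against $\bB_{\bj}^{n+1/2}$, then sum over $\bj$. The cross-product term drops pointwise since $(\bm{a}\times\bm{b})\cdot\bm{a}=0$, the cell-wise version of \eqref{eq:rho_T_n+1} fixes the thermal energy, the implicit current $\bJ_{\bj}^{n+1/2}$ cancels the $(\bE^{n+1}+\bE^n)\cdot\bu$ coupling exactly as in \eqref{eq:prove_thm2_3}, and the two discrete curl cross-terms cancel by the SBP identity above. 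The Scheme-I version is handled identically, substituting the implicit-midpoint $\bm{B}$ update for \eqref{eq:prove_thm2_2} and using \eqref{eq:energy_1} instead of \eqref{eq:energy_2}.

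The main obstacle I anticipate is the convection substep, specifically showing that the non-conservative regularization fluxes $R^{n\pm}$ do not generate any spurious kinetic-energy drift once summed over $\bj$ against the weight $|\bv|^2$. The Maxwell-Lorentz part is essentially mechanical once the discrete SBP identity for $\Pi_{\bx}$ is established, since the implicit coupling through $\bu^{n+1}$ and $\bJ^{n+1/2}$ was designed from the outset to reproduce the semi-discrete cancellations in each cell; the delicate bookkeeping lies in ensuring that the Hermite-based linear reconstruction \eqref{eq:recon} and the characteristic bounds \eqref{eq:lambda} interact cleanly with the $|\bv|^2$ weight so that only conservative contributions survive the summation, which is likely the reason the authors defer the detailed argument.
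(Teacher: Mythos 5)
Your proposal follows essentially the same route as the paper: the two substeps are treated separately, the semi-discrete algebra of Theorem \ref{thm:energy_2} is repeated cell by cell with the mesh sum replacing the spatial integral, and the discrete curl cross-terms cancel under periodicity exactly as you describe. The one obstacle you flag --- the non-conservative fluxes $R^{n\pm}$ --- is dispatched in the paper by the single observation that the regularization only revises the moment coefficients at the highest order $|\alpha|=M$, while the kinetic energy $\int_{\bbR^3}|\bv|^2 f\dd\bv$ involves only moments of order at most two, so these fluxes contribute nothing to the energy balance (no telescoping is required).
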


\begin{proof}
We only show the proof for the fully discrete scheme with time integrator {\bf Scheme-II}.
The proof for  {\bf Scheme-I} is similar.
We now prove the total energy conservation of the fully discrete scheme  in two steps. 
For the convection step, by multiplying \eqref{eq:update_f} with $|\bv|^2$ and integrating with respect to $\bv$, it holds 
{\small 
\begin{equation}
  \label{eq:proof_energy_2}
   \int_{\bbR^3} |\bv|^2\frac{ f_{\bj}^{n+1,\ast} - f_{\bj}^n}{\Delta t} \dd \bv =\int_{\bbR^3} |\bv|^2\left(-\sum_{d=1}^{3}\frac{1}{\Delta x_d}
\left(F^n_{\bm{j}+e_d/2}- F^n_{\bm{j}-e_d/2}\right)
-\sum_{d=1}^{3}\frac{1}{\Delta x_d}
\left(R^{n-}_{\bm{j}+e_d/2}- R^{n+}_{\bm{j}-e_d/2}\right)\right) \dd \bv.
\end{equation}
}
Summing \eqref{eq:proof_energy_2} for all $\bj\in\bJ$, together with periodic boundary conditions in $\bx$ and the fact that the regularization term only revises the moment coefficients at the highest order and does not change the total energy, we have
\begin{equation}
\label{eq:proof_energy_3}
  \mE_{K,h}^{n+1, \ast} \triangleq \frac{\Delta V}{2} \sum_{\bj \in \bJ}\int_{\bbR^3} |\bv|^2 f_{\bj}^{n+1, \ast} \dd \bv = \frac{\Delta V}{2} \sum_{\bj \in \bJ}\int_{\bbR^3} |\bv|^2 f_{\bj}^{n} \dd \bv \triangleq \mE_{K,h}^{n}. 
\end{equation}
For the Lorentz force step, \eqref{eq:full_B1} at time level $n+1$ is 
\begin{equation}
  \label{eq:proof_energy_4}
  \frac{ \bm{B}^{n+3/2}_{\bm{j}} - \bm{B}^{n+1}_{\bm{j}} }
  {\Delta t / 2}
  =- \Pi_{\bx} \bE_{\bj}^{n+1},
\end{equation}
which, combining with \eqref{eq:full_B2}, yields
\begin{equation}
  \label{eq:proof_energy_5}
  \frac{ \bm{B}^{n+3/2}_{\bm{j}} - \bm{B}^{n+1/2}_{\bm{j}} }
  {\Delta t}
  =- \Pi_{\bx} \bE_{\bj}^{n+1}, \qquad \frac{ \bm{B}^{n+1/2}_{\bm{j}} - \bm{B}^{n-1/2}_{\bm{j}} }
  {\Delta t}
  =- \Pi_{\bx} \bE_{\bj}^{n}. 
\end{equation}
Similarly as in the proof of Theorem \ref{thm:energy_2}, summing up \eqref{eq:proof_energy_5}
for $\bj$, we obtain 
\begin{equation}
  \label{eq:proof_energy_6}
\begin{aligned}
  &(\mE^{n+1}_{K, h} + \mE^{n+1}_{E, h} + \mE^{n+1}_{B, h}) - (\mE^{n+1, \ast}_{K, h} + \mE^{n}_{E, h} + \mE^n_{B, h})\\ 
  & \qquad =\Delta t \Delta V \sum_{\bj \in \bJ} \left[\Pi_{\bx} \bB^{n+1/2}_{\bj} \cdot (\bE^{n+1}_{\bj} + \bE^{n}_{\bj})
  -\Pi_{\bx} (\bE^{n+1}_{\bj} + \bE^{n}_{\bj})\cdot
  \bB^{n+1/2}_{\bj} \right]\\
  & \qquad =\Delta t \Delta V \sum_{\bj \in \bJ} \Pi_{\bx} \cdot \left(\bB^{n+1/2}_{\bj} \cdot
  (\bE^{n+1}_{\bj} + \bE^{n}_{\bj})\right)=0,
  \end{aligned}
\end{equation}
with
\begin{equation}
  \label{eq:proof_energy_7}
   \mE^{n}_{E, h} =\frac{\Delta V}{2} \sum_{\bj\in \bJ} |\bE_{\bj}^n|^2, \qquad \mE^{n}_{B, h} =\frac{\Delta V}{2} \sum_{\bj\in \bJ} \bB_{\bj}^{n+1/2} \cdot \bB_{\bj}^{n-1/2},
\end{equation}
where in the last equality of \eqref{eq:proof_energy_6} we use the definition \eqref{eq:operator_Pi} and  periodic boundary conditions in $\bx$.
Then the proof is completed by collecting \eqref{eq:proof_energy_3} and \eqref{eq:proof_energy_6}.
\end{proof}

Similar to the discussion in Sec. \ref{sec:semi}, the fully discrete scheme with time integrator {\bf Scheme-I} exactly preserves the total energy, while the fully discrete scheme with time integrator {\bf Scheme-II} achieves near conservation of the total energy.
On the other hand, the computation of {\bf Scheme-I} is demanding and requires to invert  a nonlinear  coupled system of $(\bE,\bB,\bu)$, while {\bf Scheme-II} advances $\bB$ explicitly and only deals with a smaller linear system of $(\bu,\bE)$. Thus we use {\bf Scheme-II} in the numerical simulation, which is more efficient for those benchmark examples compared to {\bf Scheme-I}.


\section{Numerical experiments}
\label{sec:num}
In this section, we present numerical results to demonstrate the performance of the proposed scheme {\bf Scheme-II} for several benchmark examples under 1D2V and 2D3V settings. All the results are computed on the model named Intel(R) Xeon(R) Gold 5218 CPU @ 2.30GHz with the technique of multi-thread adopted. We set $\text{CFL}=0.1$ unless otherwise specified. For conservation properties of all tests, we examine the following two measures:
\begin{itemize}
	\item relative error in the mass:
	\begin{equation}
		\label{eq:error_P}
		\mV(\mP_h^n) = |\mP_h^n - \mP_h^0| / \mP_h^0,
	\end{equation}
\item relative error in the total energy
 \begin{equation}
	\label{eq:var_energy}
	\mV(\mE_{{\rm total}, h}^n) = |\mathcal{E}^n_{{\rm total}, h}-\mathcal{E}^0_{{\rm total}, h}|/|\mathcal{E}^0_{{\rm total}, h}|.
\end{equation}
\end{itemize}


We begin with the VM system \eqref{eq:vlasov} and \eqref{eq:maxwell} in a simple 1D2V setting, which becomes
\begin{subequations}
\label{eq:vm1d2v}
\begin{align}
   	\dfrac{\partial f}{\partial t}&+v_1\dfrac{\partial f}{\partial x}+ (E_1 + v_2 B_3) \pd{f}{v_1} + (E_2 - v_1 B_3) \pd{f}{v_2} = 0,\\
		\dfrac{\partial B_3}{\partial t}&=-\dfrac{\partial E_2}{\partial x}, \qquad \\
 		\dfrac{\partial E_1}{\partial t}&=-\int_{\mathbb{R}^2}v_1 f(t,x,\bm{v})\dd \bm{v},\qquad \\
 		\dfrac{\partial E_2}{\partial t}&=-\dfrac{\partial B_3}{\partial x} -\int_{\mathbb{R}^2}v_2 f(t,x,\bm{v})\dd \bm{v}. 
 		\end{align}
\end{subequations}
Here, $f=f(t,x,v_1,v_2)$, $\bE(t,x)=(E_1(x,t),E_2(x,t),0)$ and $\bB=(0,0,B_3(t,x))$ where $x\in\Omega\subset\mathbb{R}$ and $(v_1,v_2)\in \mathbb{R}^2$. $f, \bB, \bE$ are periodic in the $x$-direction. More details can be found in \cite{califano1998kinetic,Hamilton}.
For this simplified model, considering Theorem \ref{thm:energy_2}, the discrete electric energy, magnetic energy, and kinetic energy are reduced into 
\begin{subequations}
	\label{eq:E_1D2D}
	\begin{align}
  	\mathcal{E}^n_{E,h} &= \frac{\Delta x}{2}\sum^{N}_{j=1} [(E^n_{1, j})^2+(E^n_{2, j})^2], \qquad 
  		\mathcal{E}^n_{B,h}=\frac{\Delta x}{2}\sum^{N}_{j=1} B^{n+1/2}_{3, j}B^{n-1/2}_{3, j}, \\
   \mathcal{E}^n_{K,h} &= \frac{\Delta x}{2}\sum_{j=1}^{N}\left[
 \rho^n_j(u^n_{1,j})^2 +\rho^n_j(u^n_{2,j})^2+ 2\rho^n_j\mathcal{T}^n_j\right],
\end{align}
\end{subequations}
with $N$ being the number of meshes in the spatial domain and $\Delta x$ being the spatial mesh size. 

\subsection{Linear Landau damping}
\label{sec:landau}
In this section, we consider the linear Landau damping problem, which was first introduced in \cite{landau196561} and was verified later by experiments in \cite{malmberg1964collisionless}. It refers to the phenomenon that the amplitude of a wave decreases due to the interaction between the particles and the wave. The governing equations of this problem are a VA system \eqref{eq:vp} in the 1D2V setting, which can be written in the simple form of 
the VM system \eqref{eq:vm1d2v} with $\bB=\bz$. The initial condition is given by
\begin{align}
  \label{eq:ex1_ini}
	f(0,x,\bm{v})=\frac{1}{2\pi}e^
	{-\frac{|\bm{v} - \bu|^2}{2\mT}}(1+A\cos(k x)),
	\qquad (x,\bm{v})\in[0,L]\times\mathbb{R}^2,
\end{align} 
where $\bv=(v_1,v_2)$, $A$ is the perturbation amplitude, $k$ is the wave number, and
\begin{equation}
  \bu = \bz, \qquad \mT = 1, \qquad L = \frac{2\pi}{k}.
\end{equation}

We perform numerical simulations with 
$N = 500$ meshes in the spatial domain, and the truncation order of Hermite series set as $M = 30$. We consider two cases in the initial condition \eqref{eq:ex1_ini} with $k = 0.3$, $A = 10^{-5}$ and
$k = 0.4$, $A = 0.01$. 

We first verify the conservation properties of the proposed method.
Fig. \ref{fig:mass_landau} shows the time evolution of the relative error in the mass $\mV (P^n_h )$. Fig. \ref{fig:totalenergy_landau} shows the time evolution of the relative error in the total energy $\mathcal{V}(\mE_{{\rm total}, h}^n)$. It can be observed that the errors stay small, below $10^{-15}$ for both the mass and the total energy, which reflects the mass- and energy-preserving properties of the numerical scheme as illustrated in Theorem \ref{thm:mass_full} and Theorem \ref{thm:energy_3}.
\begin{figure}[!htbp]
	\centering
	\subfigure[$k=0.3$, $A=10^{-5}$.]{
		\includegraphics[width=0.45\linewidth]{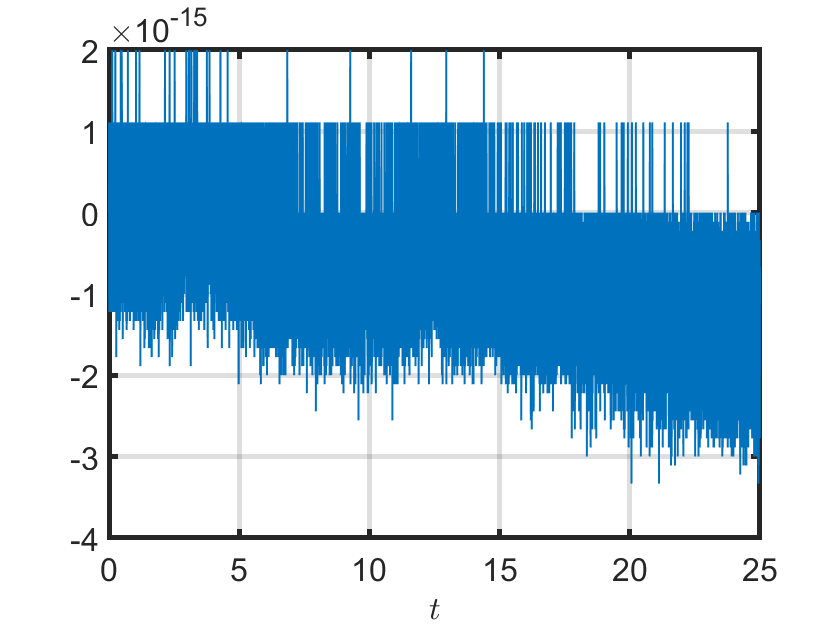}}
	\subfigure[$k=0.4$, $A=0.01$.]{
		\includegraphics[width=0.45\linewidth]{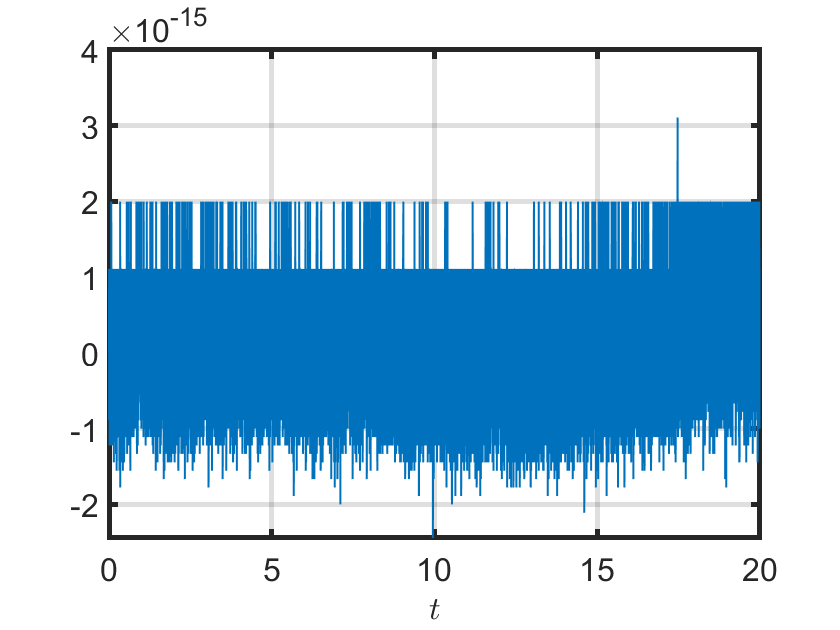}}
	\caption{Time evolution of the relative error in the mass for the Landau damping test in Sec. \ref{sec:landau}. $x$-axis: time $t$; $y$-axis: $\mV (\mP^n_h )$ defined in \eqref{eq:error_P}.}
		\label{fig:mass_landau}
\end{figure}

\begin{figure}[!htbp]
	\centering
	\subfigure[$k=0.3$, $A=10^{-5}$.]{
		\includegraphics[width=0.45\linewidth]{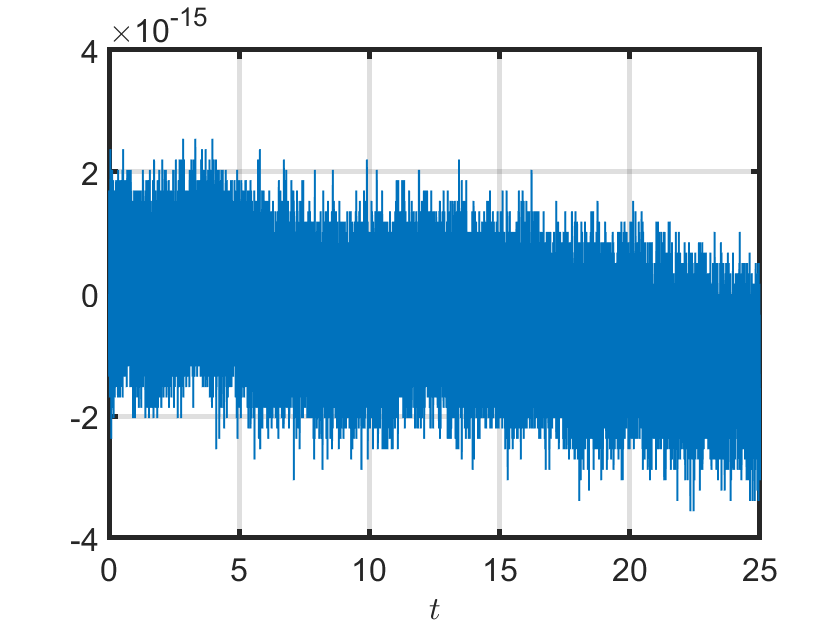}}
	\subfigure[$k=0.4$, $A=0.01$.]{
		\includegraphics[width=0.45\linewidth]{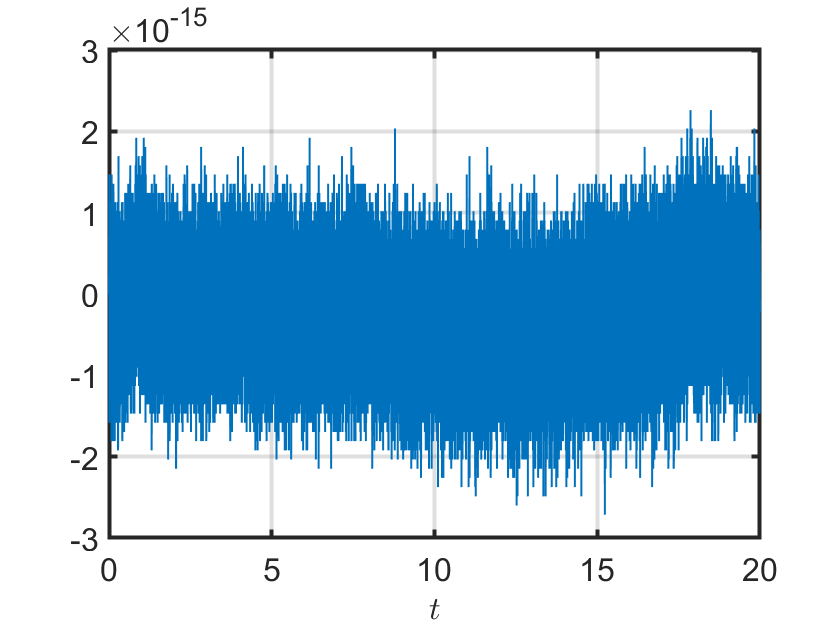}
	}
	
	\caption{Time evolution of the relative error in the total energy for the Landau damping test in Sec. \ref{sec:landau}. $x$-axis: $t$; $y$-axis: $\mV(\mE_{{\rm total}, h}^n)$ defined in \eqref{eq:var_energy}.
		\label{fig:totalenergy_landau}}
\end{figure}

We further collect some sample numerical data to benchmark our schemes. It is known that in Landau damping theory, a large number of slow particles absorb energy from the wave while relatively fewer particles transfer energy to the wave, resulting in particles and the wave tending to synchronize \cite{doveil2005experimental}. Thus we investigate the time evolution of the electric energy $\mathcal{E}_{E}$ which is expected to decay exponentially with a fixed rate $\gamma_L$ related to the wave number $k$, see e.g. \cite{Vlasov2012} for more details. 
We show the time evolution of the electric energy $\mE^n_{E,h}$ in the log scale in Fig. \ref{fig:landau}.
We capture local peak values of the electric energy $\mE^n_{E,h}$, and obtain the
damping slope $\gamma_{L, h}$ using the least-square approximation as in
\cite{Vlasov2012}. Fig. \ref{fig:landau} shows that the numerical
damping slope agrees well with the theoretical result \cite{Hamilton}. Tab. \ref{tab:slope} lists the qualitative results of the damping slope. We observe that the error between the theoretical value $\gamma_L$ and the numerical solution $\gamma_{L, h}$ is quite small compared with the corresponding wave number $k$.

\begin{figure}[!htbp]
	\centering
	\subfigure[$k=0.3$, $A=10^{-5}$.]{
		\includegraphics[width=0.45\linewidth]{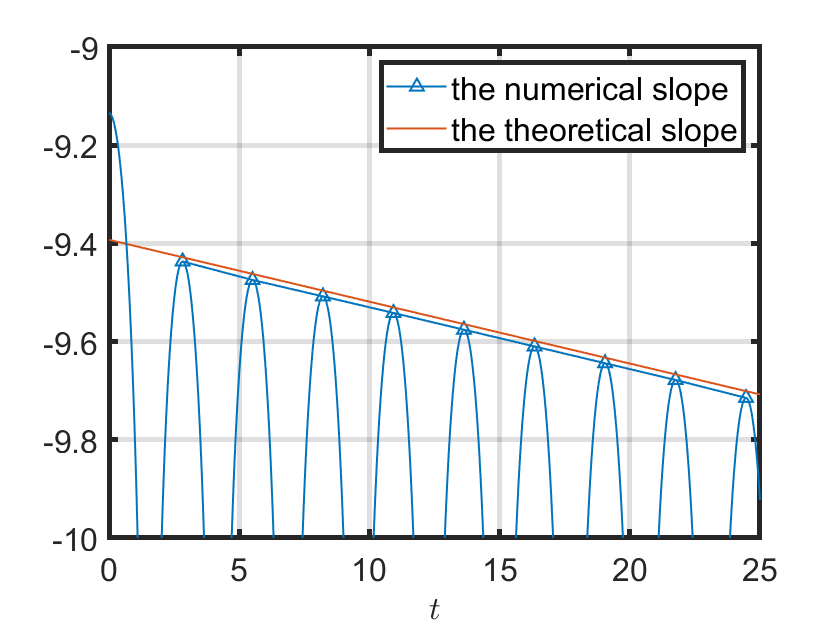}}
	\subfigure[$k=0.4$, $A=0.01$.]{
		\includegraphics[width=0.45\linewidth]{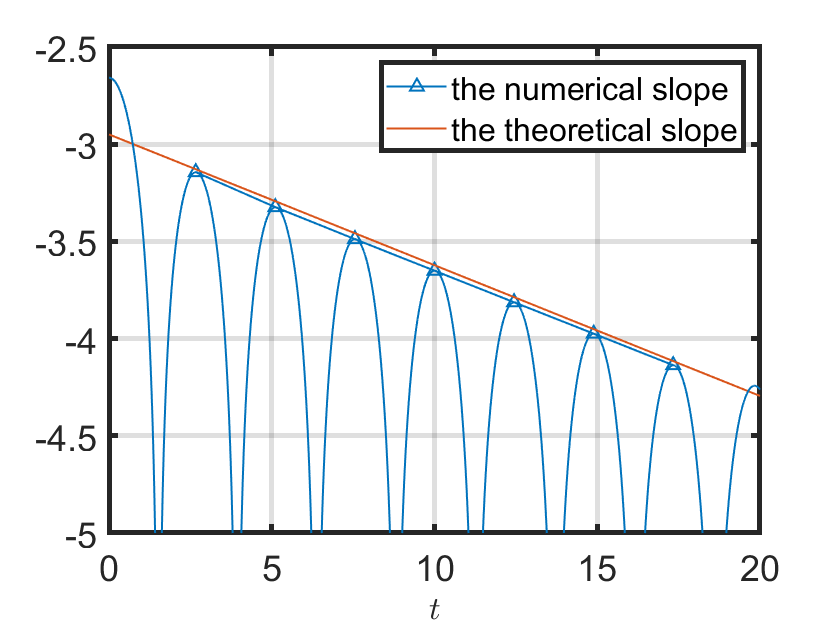}
	}
	\caption{Time evolution of the 
   electric energy for the Landau damping test in Sec. \ref{sec:landau}. $x$-axis: $t$; $y$-axis: 
   $(\log(2\mE_{E,h}^n))/2$ with $\mE_{E,h}^n$ defined in \eqref{eq:E_1D2D}.}
	\label{fig:landau}
\end{figure}

\begin{table}[!htbp]
	\centering
	  \caption{The theoretical and numerical damping slopes with different $k$.}
	\label{tab:slope}
	\smallskip
	\def\arraystretch{1.5}
	{\footnotesize
	\begin{tabular}{c|c|c|c}
		\hline
		Wave number $k$ & Theoretic slope $\gamma_{L}$ & Numerical slope $\gamma_{L,h}$ &$|\gamma_{L}-\gamma_{L,h}|$ \\
		\hline
		0.3 & -0.0126 & -0.0127&0.0001 \\
		\hline
		0.4 & -0.0661 & -0.0673 &0.0012\\
		\hline
	 \end{tabular}
	 }
 
\end{table}

%

\subsection{Two-stream instability}	
\label{sec:twostream}
In this section, we consider the two-stream instability, which is a widespread and simple electrostatic micro-instability phenomenon \cite{stix1992waves} in plasma physics, where the movement of the particles is disturbed when the charged particle beam passes through the plasma, and further generating electric field of space charges, which encourages clustering, and leads to the dual-current instability.
The governing equations of this problem are the simplified VM system \eqref{eq:vm1d2v} in the 1D2V setting, with the initial conditions under the same settings as in \cite{Hamilton}, given by
\begin{align}
 \label{eq:ini_ex2}
	f(0,x,\bv)&=\frac{\rho}{2\pi \mathcal{T}}\left[\frac{1}{2}\exp\left({-\frac{|\bv - \bu|^2}{2\mathcal{T}}}\right)+\frac{1}{2}\exp\left(-\frac{|\bv + \bu|^2}{2\mathcal{T}}\right)\right],\\[2mm]
	\label{eq:ex2_E_B}
  \bE(0,x) &= \bz, \qquad B_3(0, x) = A\sin(kx),\qquad (x,\bm{v})\in[0,L]\times\mathbb{R}^2,
\end{align}
where $L = 2\pi,\ A = 10^{-3}$ and $k = 1$. At $t=0$, we set $\rho=1, \ \bu = (0.2, 0)$, and $\mathcal{T}=10^{-3}.$
Note here that for this problem involving the magnetized plasma, the initial magnetic field perturbation is also considered besides the regular perturbation of the probability density distribution to drive instability.


We perform numerical simulations with 
$N = 200$ meshes in the spatial domain, and the truncation order of Hermite series taken as $M = 30$.
We first show the time evolution of the relative error in the mass $\mV(\mP_h^n)$ and the relative error in the 
 total energy $\mV(\mE_{{\rm total}, h}^n)$ in Fig. \ref{fig:twostream_var_P} and \ref{fig:twostream_var_E}, respectively. It can be observed that the errors are below $10^{-14}$, which demonstrate excellent conservation properties of the numerical scheme as illustrated in Theorem \ref{thm:energy_3} and Theorem \ref{thm:mass_full}. We also take a closer look at the time evolution of the errors between the electromagnetic and kinetic energy and their initial counterpart in Fig. \ref{fig:twostream_var_EB}. It shows the transference of total energy between the kinetic energy and the electromagnetic energy, which is consistent with the total energy conservation.
%
\begin{figure}[!htbp]
	\centering
	\subfigure[$\mV(\mP_h^n)$]{
		\includegraphics[width=0.3\linewidth, height=0.3\linewidth]{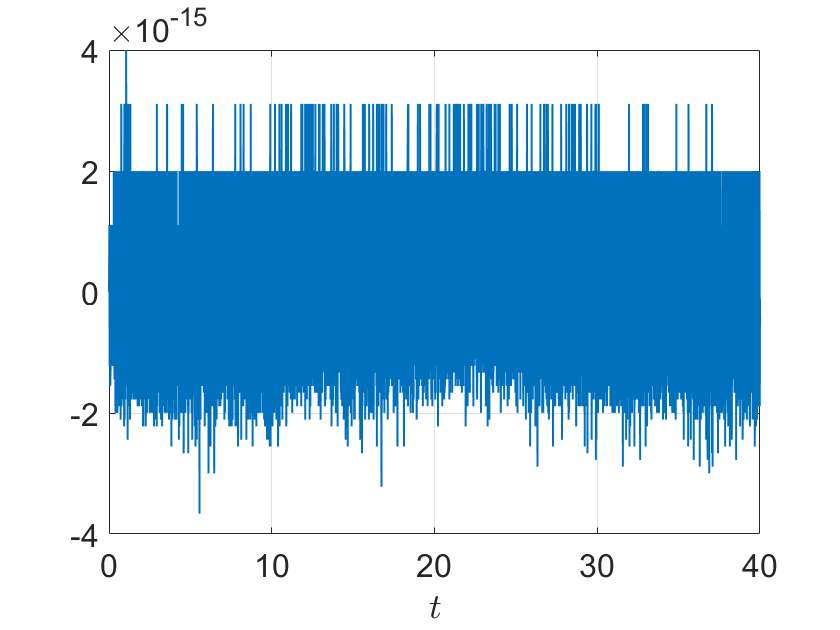}
		\label{fig:twostream_var_P}
	} 
	\subfigure[$\mV(\mE^n_{{\rm total},h})$]{
		\includegraphics[width=0.3\linewidth, height=0.3\linewidth]{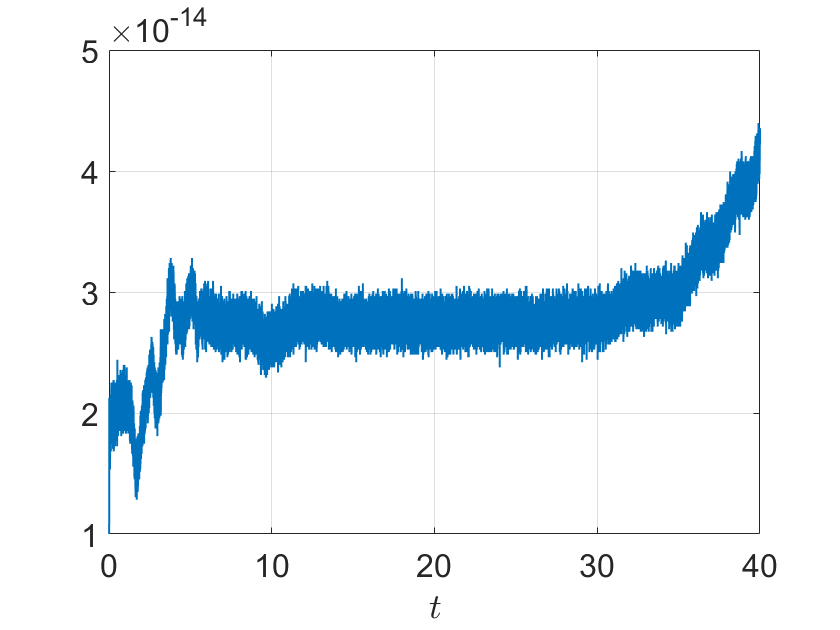}
		\label{fig:twostream_var_E}
	} 
	\subfigure[error of kinetic and electromagnetic energy]{
		\includegraphics[width=0.3\linewidth, height=0.3\linewidth]{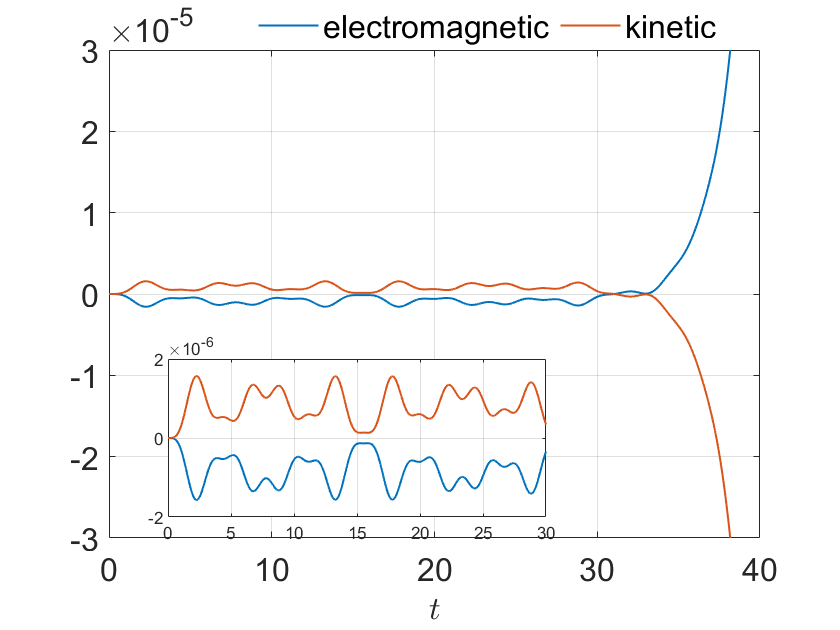}
		\label{fig:twostream_var_EB}
	}
\caption{Two-stream instability in Sec. \ref{sec:twostream}. Time evolution of the relative error in the mass $\mV(\mP_h^n)$ (left), the relative error in the total energy $\mV(\mE_{{\rm total}, h}^n)$ (middle) and the error in  the electromagnetic energy
	$(\mE^n_{B,h} +\mE^n_{E,h}) - (\mE^0_{B,h} +\mE^0_{E,h})$
	and the kinetic energy $\mE^n_{K,h} - \mE^0_{K,h}$ (right). $x$-axis: $t$; $y$-axis: the corresponding error. }
	\label{fig:twostream_E}
\end{figure}


We also plot the time
evolution for the electric, magnetic and kinetic energy defined in
\eqref{eq:E_1D2D} with difference spatial sizes with $N = 200, 400, 800$ and $1000$. In Fig. \ref{fig:twostream_sol_energy_change}, it shows that the numerical solution with $N = 200$ is indistinguishable compared with $N=1000$. The solution is well resolved even with $N=200$.
Fig. \ref{fig:twostream_sol_energy} shows the numerical solution obtained by the proposed numerical scheme with $N=200$, which matches well with the reference solution obtained by the discrete velocity method (DVM).

\begin{figure}[!htbp]
	\centering
	\subfigure[comparison among different spatial sizes]{
		\includegraphics[width=0.42\linewidth]{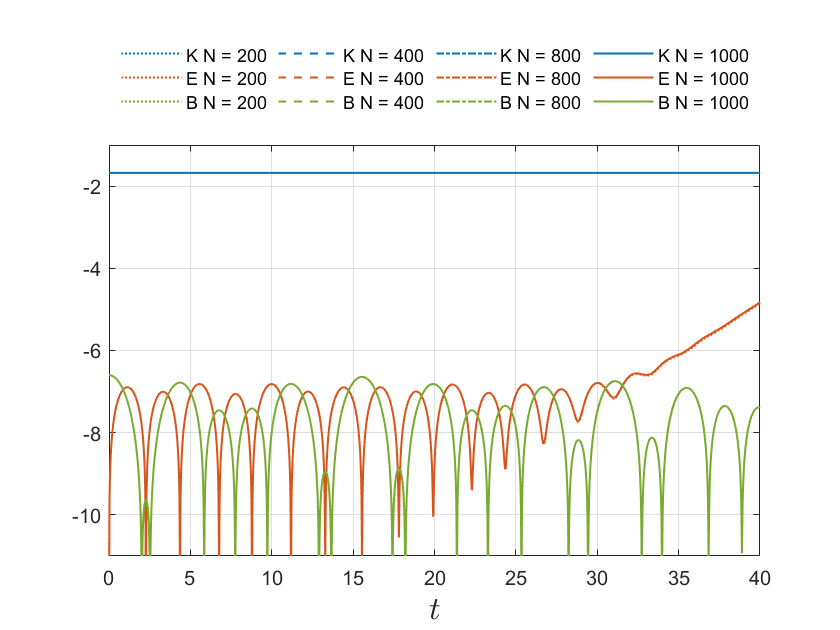}
		\label{fig:twostream_sol_energy_change}
	}\quad 
	\subfigure[comparison with DVM]{
		\includegraphics[width=0.42\linewidth]{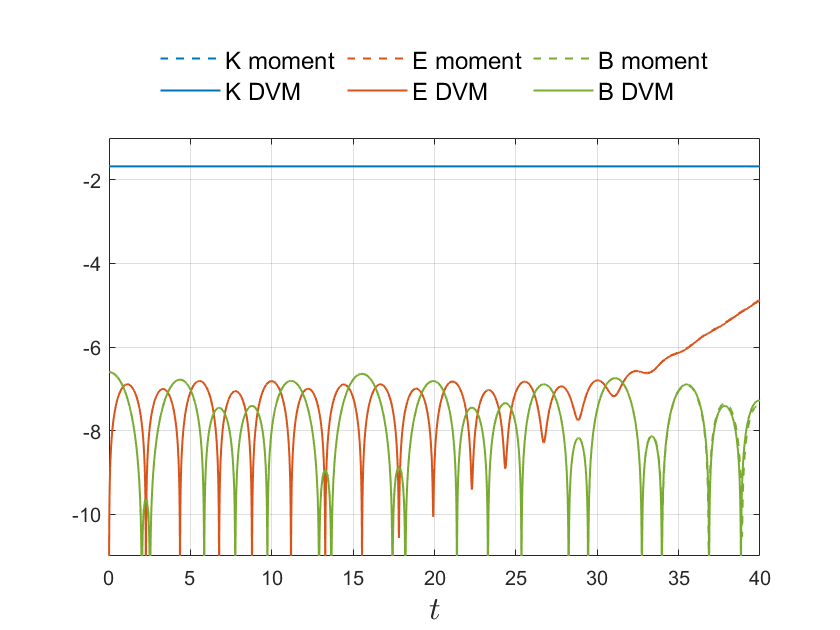}
		\label{fig:twostream_sol_energy}
	}
	\caption{Two-stream instability in Sec. \ref{sec:twostream}. Time evolution of 
		the kinetic energy $\mE^n_{K,h}$, electric energy $\mE^n_{E,h}$, and 
		the magnetic energy $\mE^n_{B,h}$. Here $y$-axis denotes the logarithmic form of the energies
		$\log_{10}(\mathcal{S}/L)$ with $\mathcal{S} = \mE^n_{K,h},
		\mE^n_{E,h}, \mE^n_{B,h} $. (a) The comparison between the proposed scheme (dashed line) with the reference solution (solid line) obtained by DVM. (b) The proposed scheme with different spatial sizes. 
		}
	\label{fig:twostream_sol}
\end{figure}

We further investigate the marginal distribution defined as
\begin{equation}
  \label{eq:MDF}
  g(t, x, v_1) = \int_{\bbR} f(t, x, \bv) \dd v_2.
\end{equation}
 Fig. \ref{fig:twostream_sol_dis} shows the marginal distribution function at $t=0$ and $t=40$. For the initial distribution, there exist two peaks. As time evolves, the oscillations appear gradually, which can be
clearly seen at $t = 40$. 
For a more detailed visualization, we show the time evolution of the components of the electric and
magnetic field by the proposed scheme as well as the DVM in Fig. \ref{fig:twostream_EB}. Clearly, the results obtained by our numerical method agree well with those of the
DVM. Moreover, it can be observed that for the component $E_1$, it is
quite small at the beginning, and then increases to some periodic
structures from $t = 30$; For $E_2$, it has the $\cos$-type structure
and then gradually changes while preserving the periodic structure; For $B_3$, it has the $\sin$-type structure at the beginning, and
is evolving with this structure kept. 
\begin{figure}[!htbp]
	\centering
	\subfigure[$t=0$]{
		\includegraphics[width=0.45\linewidth]{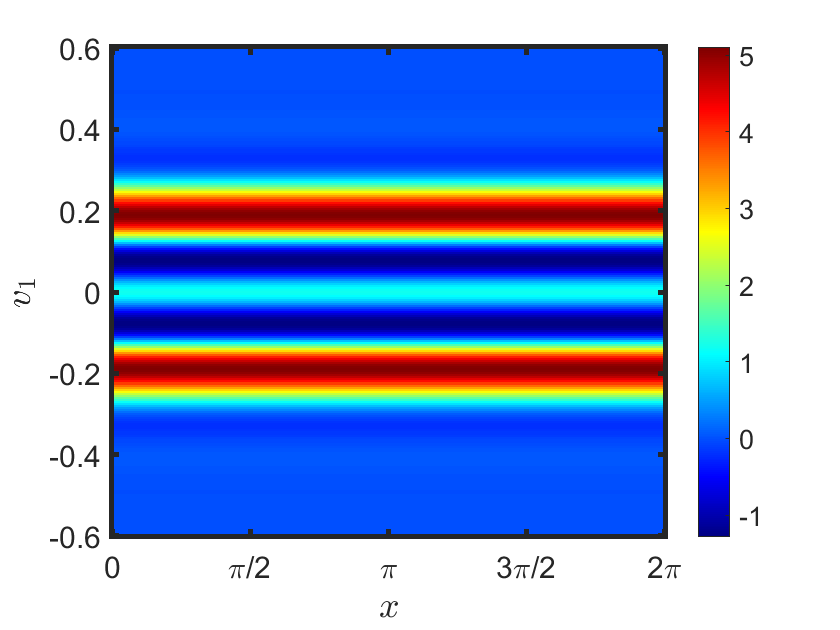}
		\label{fig:twostream_sol_f_v1x_t0}
	}\quad 
	\subfigure[$t=40$]{
		\includegraphics[width=0.45\linewidth]{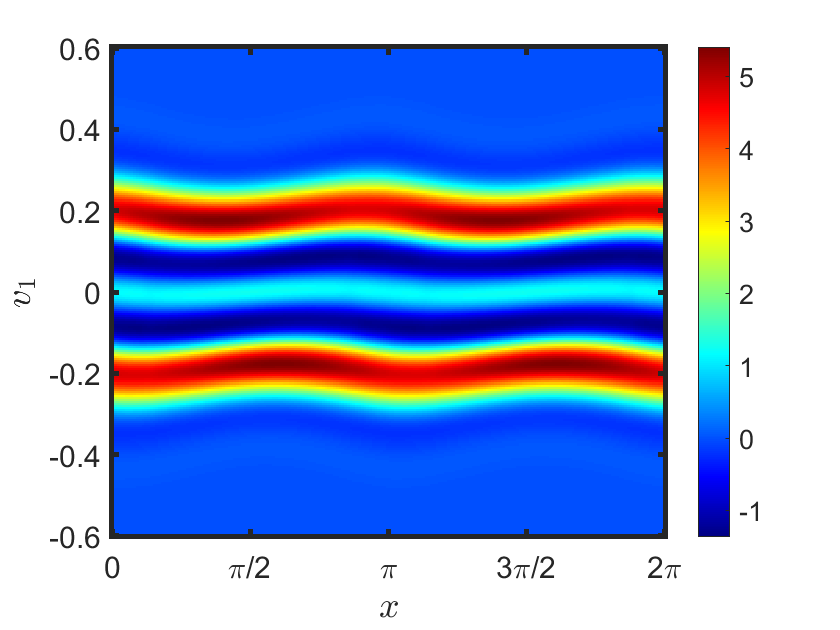}
		\label{fig:twostream_sol_f_v1x_t40}
	}
	\caption{Two-stream instability in Sec. \ref{sec:twostream}. The marginal distribution
		function $g(t, x, v_1)$ at $t = 0$ (left) and $40$ (right). }
	\label{fig:twostream_sol_dis}
\end{figure}
\begin{figure}[!htbp]
	\centering
	\subfigure[$E_1$, moment]{
		\includegraphics[width=0.45\linewidth]{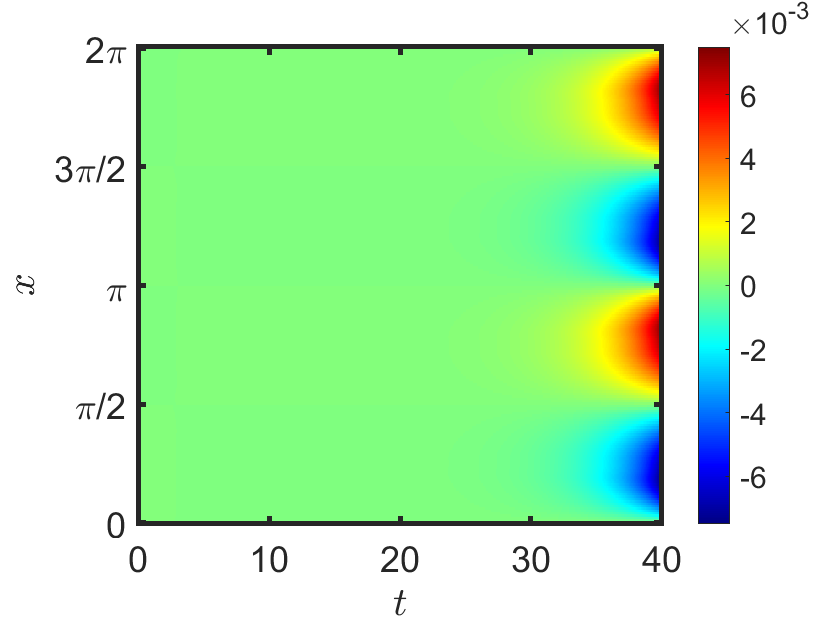}
		\label{fig:twostream_sol_E1}
	} \quad 
	\subfigure[$E_1$, DVM]{
		\includegraphics[width=0.45\linewidth]{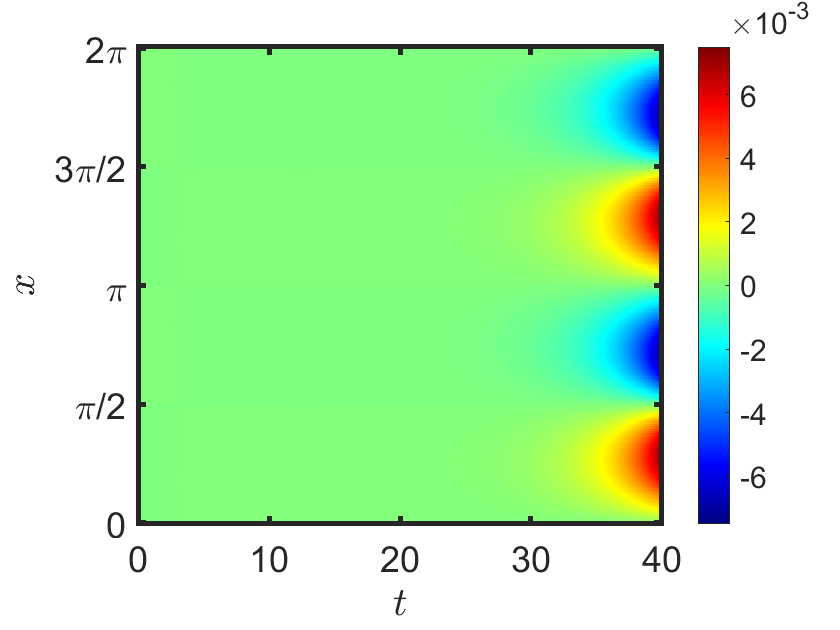}
		\label{fig:twostream_sol_E1_DVM}
	}
	\subfigure[$E_2$, moment]{
		\includegraphics[width=0.45\linewidth]{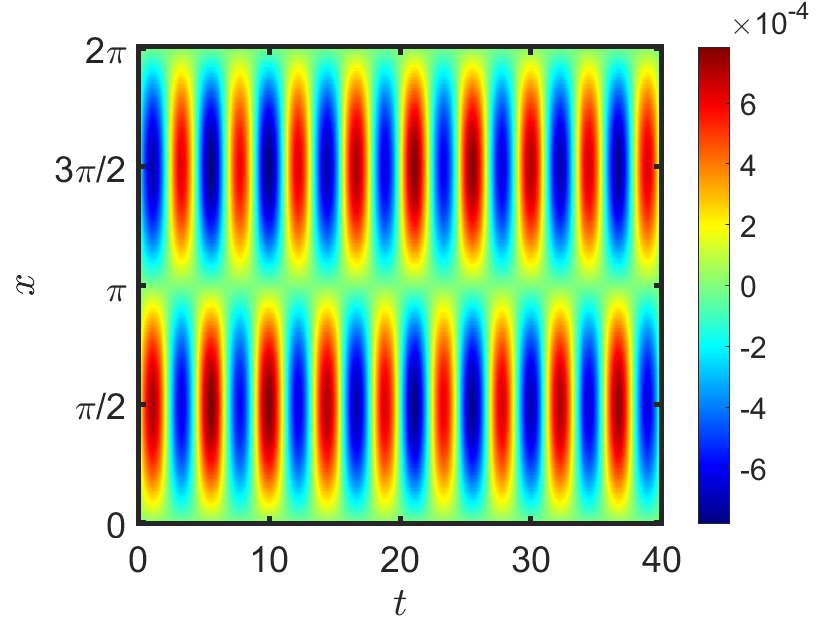}
		\label{fig:twostream_sol_E2}
	} \quad 
	\subfigure[$E_2$, DVM]{
		\includegraphics[width=0.45\linewidth]{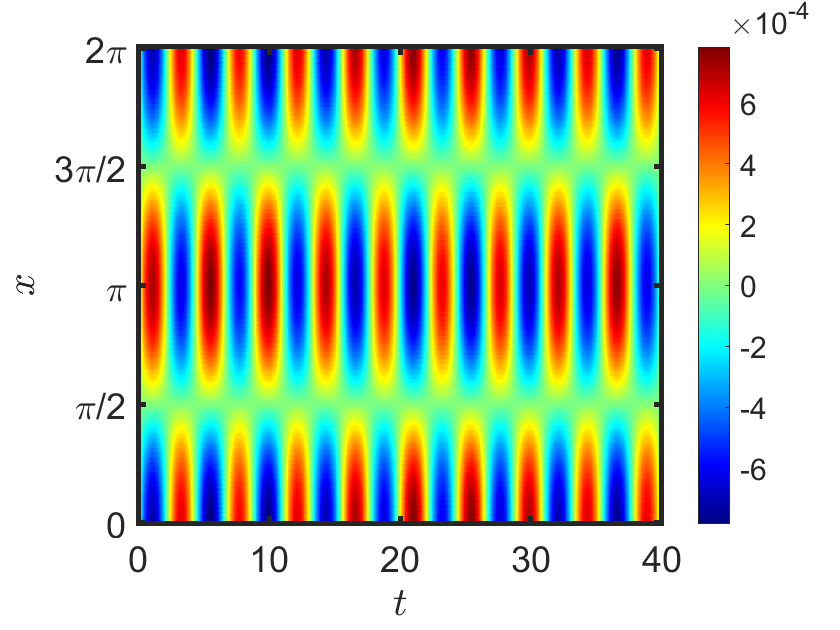}
		\label{fig:twostream_sol_E2_DVM}
	} 
	\subfigure[$B_3$, moment]{
		\includegraphics[width=0.45\linewidth]{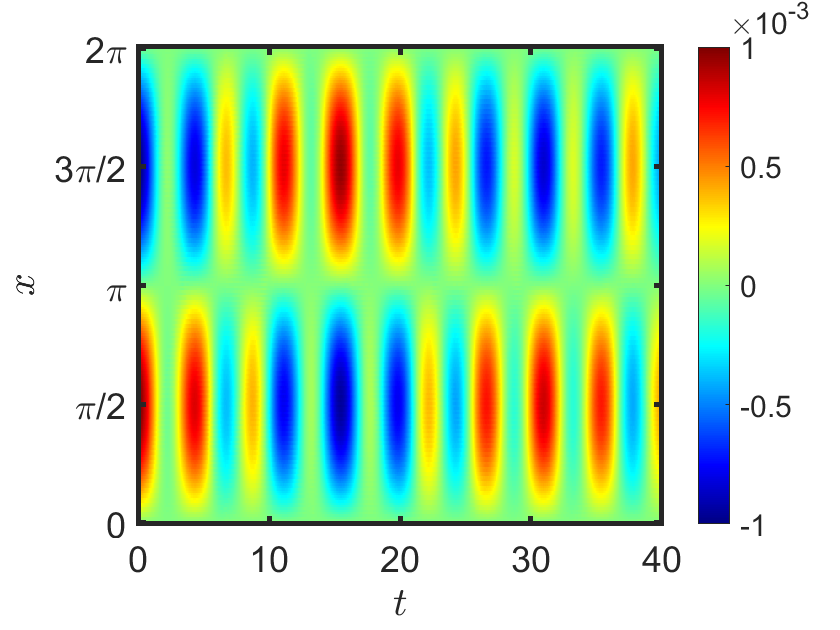}
		\label{fig:twostream_sol_B3}
	}\quad 
	\subfigure[$B_3$, DVM]{
		\includegraphics[width=0.45\linewidth]{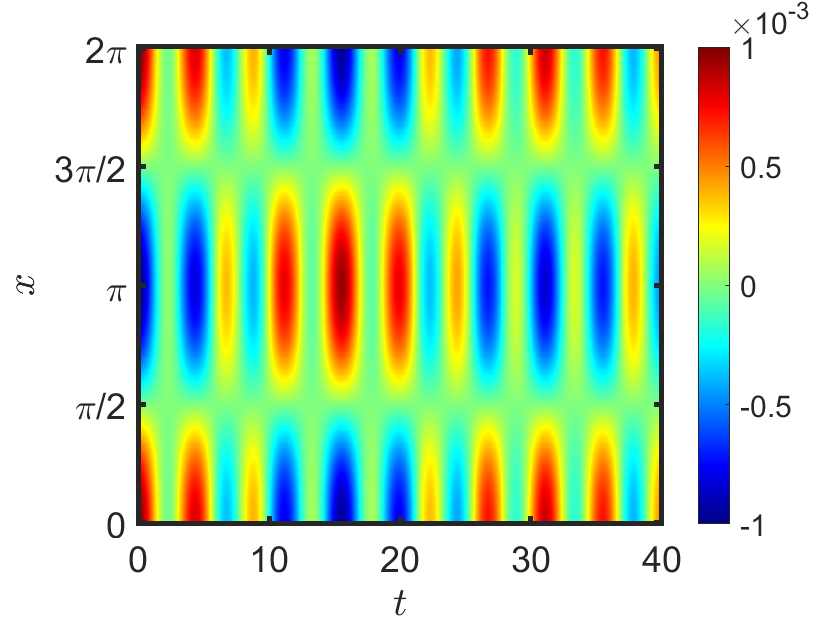}
		\label{fig:twostream_sol_B3_DVM}
	}
	\caption{Two-stream instability in Sec. \ref{sec:twostream}. Time evolution of the
		electric fields $E_1$ (top) and $E_2$ (middle), and the magnetic
		field $B_3$ (bottom), obtained by the proposed numerical scheme (left) and the DVM (right).
		}
	\label{fig:twostream_EB}
\end{figure}

\subsection{Weibel instability}	
\label{sec:weibel}
In this section, we consider the Weibel instability. In plasma physics, when the uniformly distributed electron current sheet is disturbed by the magnetic field, it produces a disturbance velocity. Then, the positive and negative current sheets are partly concentrated, and partly scattered, and generate a disturbance current \cite{Plasma}. According to Faraday's law, the current disturbance in turn generates a new magnetic field. The increased local current density also causes the plasma to be strongly pinched to form high-density filaments \cite{Plasma}. This is the so-called Weibel instability, which is also very common in plasma physics, especially in astrophysics. 
The governing equations of this problem are the simplified VM system \eqref{eq:vm1d2v} in the 1D2V setting, with the initial conditions under the same settings as in \cite{VM_Zhong}, given by
\begin{align}
 \label{eq:ini_weibel}
 	f(0,x,\bv)=&\frac{\rho}{2\pi \mathcal{T}}
 	\left[\frac{1}{6}\exp\left(-\frac{|\bv - \bu_1|^2}{2\mathcal{T}}\right)
 	+\frac{5}{6}\exp\left(-\frac{|\bv - \bu_2|^2}{2\mathcal{T}}\right)\right],\\
 	 \label{eq:weibel_B}
 \bE(0,x)& = \bz,\qquad B_3(0,x)=A\sin(kx),\qquad (x,\bv)\in[0,L]\times\mathbb{R}^2
\end{align}
 with $A=10^{-3}, \ k=0.2, \ L=2\pi/k=10\pi$. At $t=0$,
 $\rho=1,\ \bu_1 = (0, 0.5), \ \bu_2 = (0, -0.1), \ \mathcal{T}(0,x)=5\times10^{-3}.$

\begin{figure}[!htbp]
 	\centering
 	\subfigure[$\mV(\mP_h^n)$ ]{
 		\includegraphics[width=0.3\linewidth, height=0.3\linewidth]{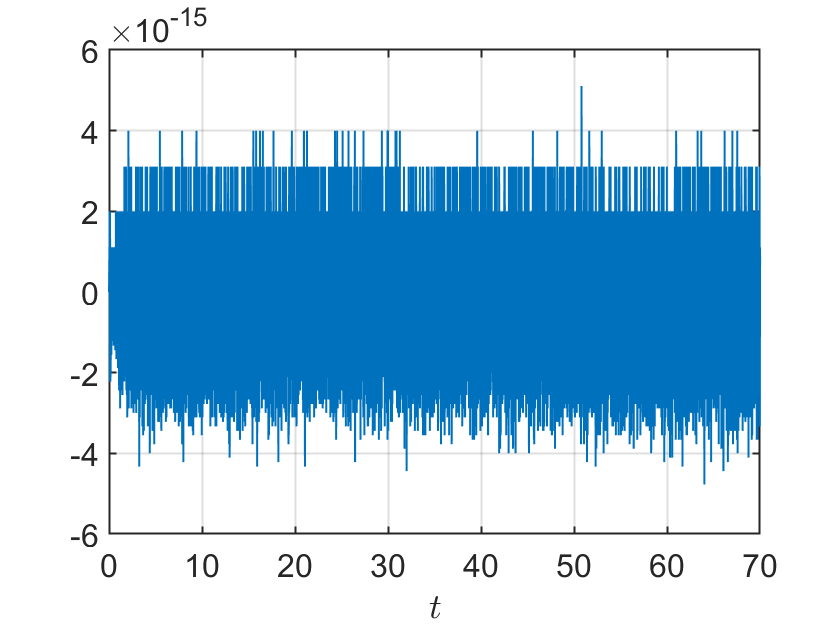}
 	}
  \subfigure[$\mV(\mE_{{\rm total}, h}^n)$ ]{
	 \includegraphics[width=0.3\linewidth, height=0.3\linewidth]{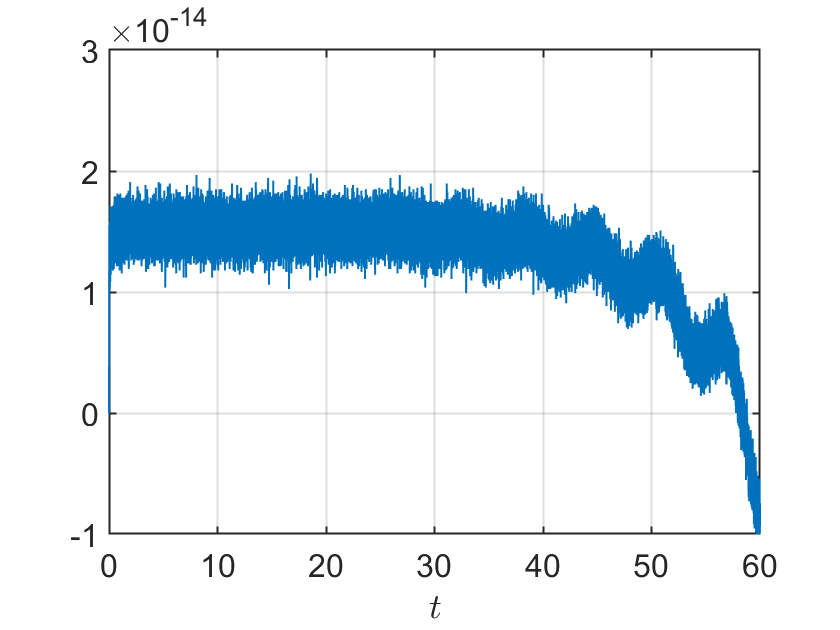}
	 }
  \subfigure[variation of each energy]{
	 \includegraphics[width=0.3\linewidth, height=0.3\linewidth]{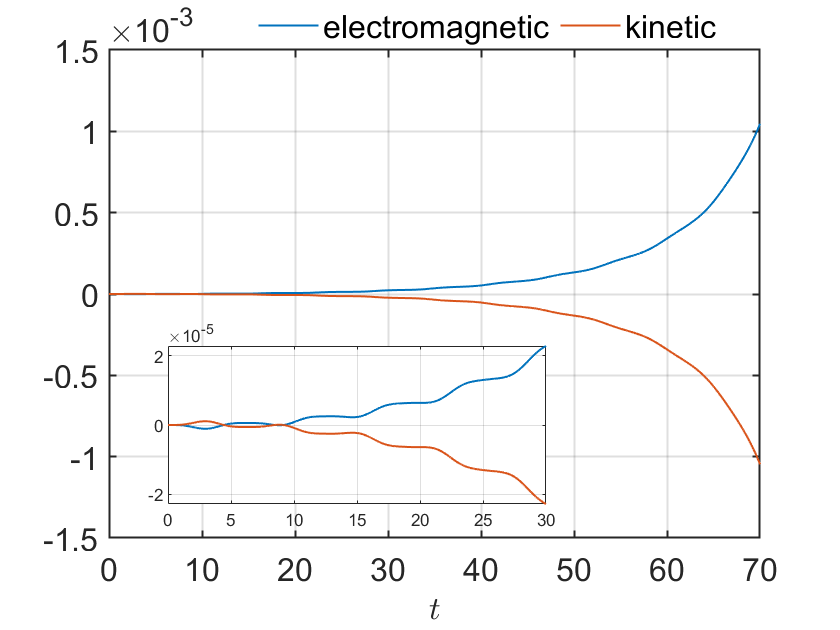}
	 \label{fig:weibel_var_EB}
	 }
 	\caption{Weibel instability in Sec. \ref{sec:weibel}. Time evolution of the relative error in the mass $\mV(\mP_h^n)$ (left). Time evolution of the relative error in the total energy $\mV(\mE_{{\rm total}, h}^n)$ (middle). The errors of the electromagnetic energy  $(\mE^n_{B,h} +\mE^n_{E,h}) - (\mE^0_{B,h} +\mE^0_{E,h})$
   and kinetic energy $(\mE^n_{K,h} - \mE^0_{K,h})$ (right). $x$-axis denotes time $t$, and $y$-axis denotes the corresponding error. }
	\label{fig:weibel_E}
\end{figure}
Numerical simulations are performed with $N=2000$ meshes in the spatial domain and the truncation order of the moment method taken as $M=30$. The time evolution of the relative error in the mass and the total energy is shown in Fig. \ref{fig:weibel_E}. For the Weibel instability, the relative errors of the mass and energy is still quite small, which validates the conservation properties of this numerical scheme. The errors in electromagnetic and kinetic energy are plotted in Fig. \ref{fig:weibel_var_EB}, the behavior of which is similar to that of the two-stream instability.

\begin{figure}[!htbp]
	\centering
	\subfigure[comparison among different spatial sizes]{
		\includegraphics[width=0.45\linewidth]{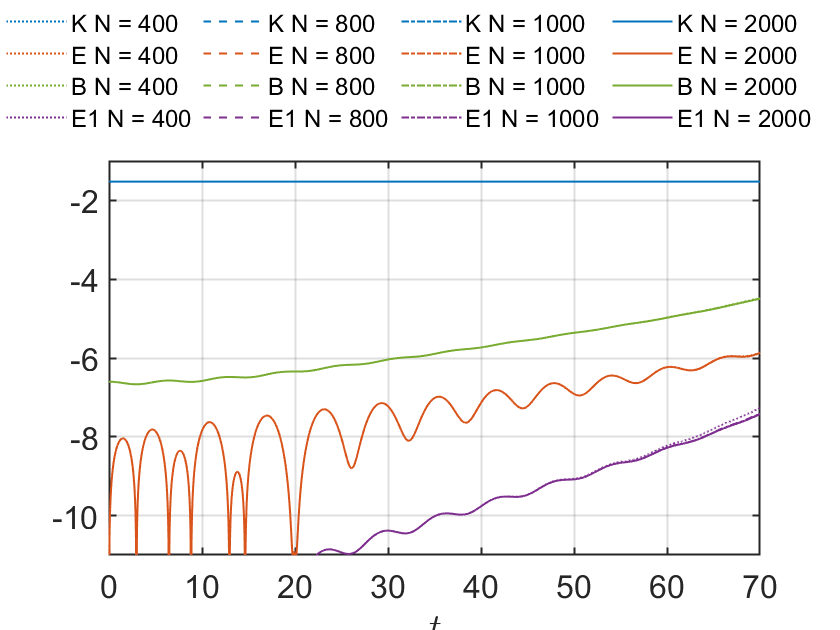}
		\label{fig:weibel_sol_energy_N}
	}
  \subfigure[comparison with the DVM ]{
		\includegraphics[width=0.45\linewidth]{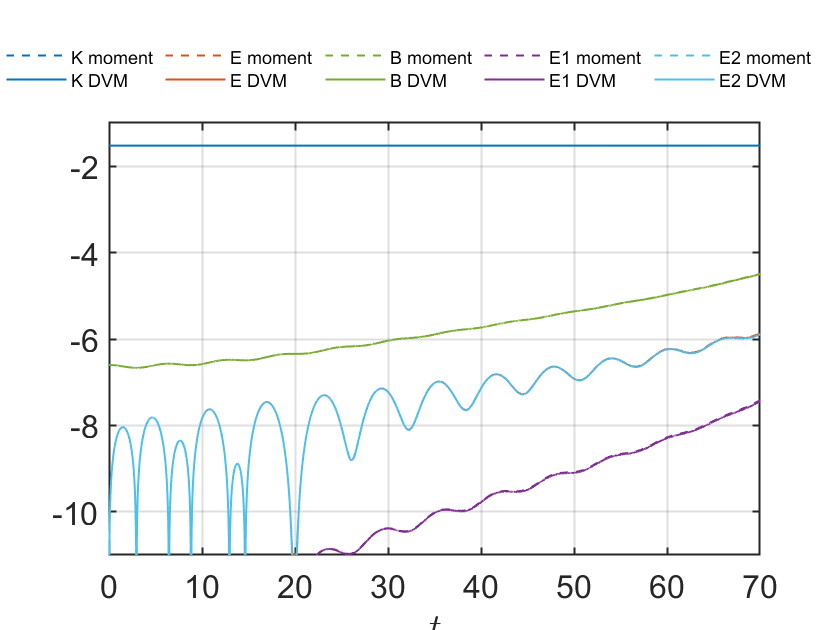}
		\label{fig:weibel_sol_energy}
	}	\quad 
	\caption{Weibel instability in Sec. \ref{sec:weibel}. Time evolution of 
	the kinetic energy $\mE^n_{K,h}$, electric energy $\mE^n_{E,h}$,
	the magnetic energy $\mE^n_{B,h}$ and $\mE^n_{E_i,h},i=1,2$.The $y$-axis denotes the logarithmic form of the energy
   $\log_{10}(\mathcal{S}/L)$ with $\mathcal{S} = \mE^n_{K,h},
   \mE^n_{E,h}, \mE^n_{B,h}, \mE^n_{E_i,h}, i = 1,2$.
	(a) The proposed numerical scheme with different spatial sizes. (b) The comparison between the proposed numerical scheme (dashed line) and the DVM (solid line). 
}
	\label{fig:weibel_sol}
\end{figure}

\begin{figure}[!htbp]
	\centering
  \subfigure[$t=0$]{
		\includegraphics[width=0.45\linewidth]{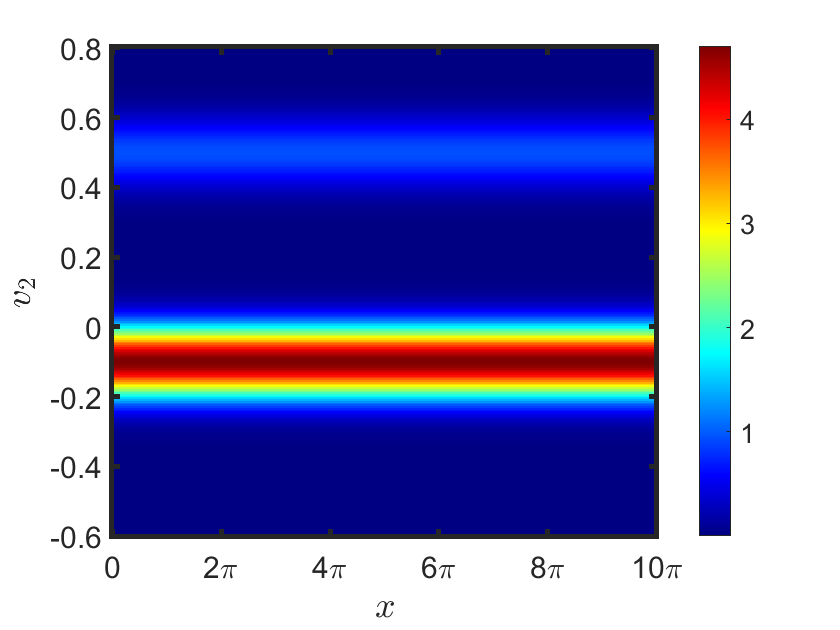}
		\label{fig:weibel_sol_f_v2x_t0}
	} \quad
	\subfigure[$t=30$]{
		\includegraphics[width=0.45\linewidth]{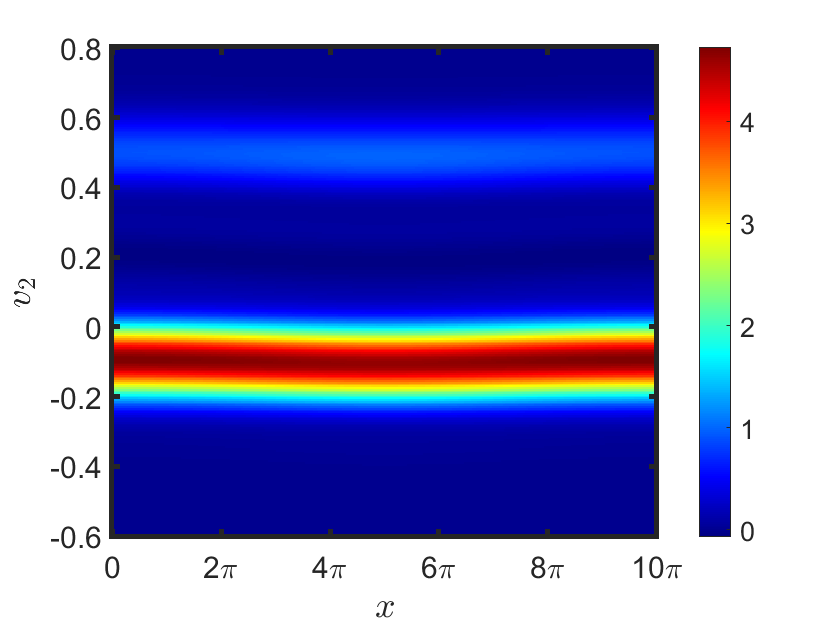}
		\label{fig:weibel_sol_f_v2x_30}
	} \\
  \subfigure[$t=50$]{
	\includegraphics[width=0.45\linewidth]{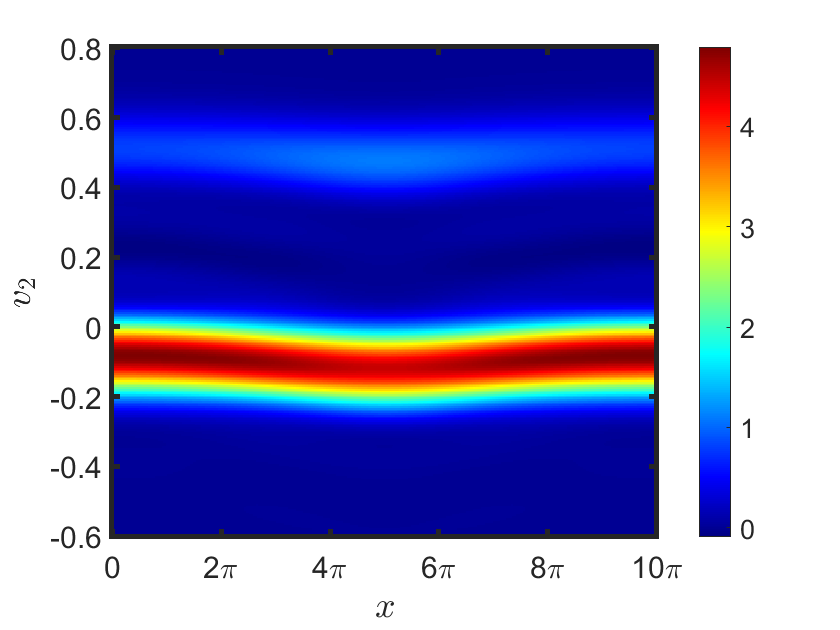}
	\label{fig:weibel_sol_f_v2x_t50}
	} \quad 
	\subfigure[$t=70$]{
		\includegraphics[width=0.45\linewidth]{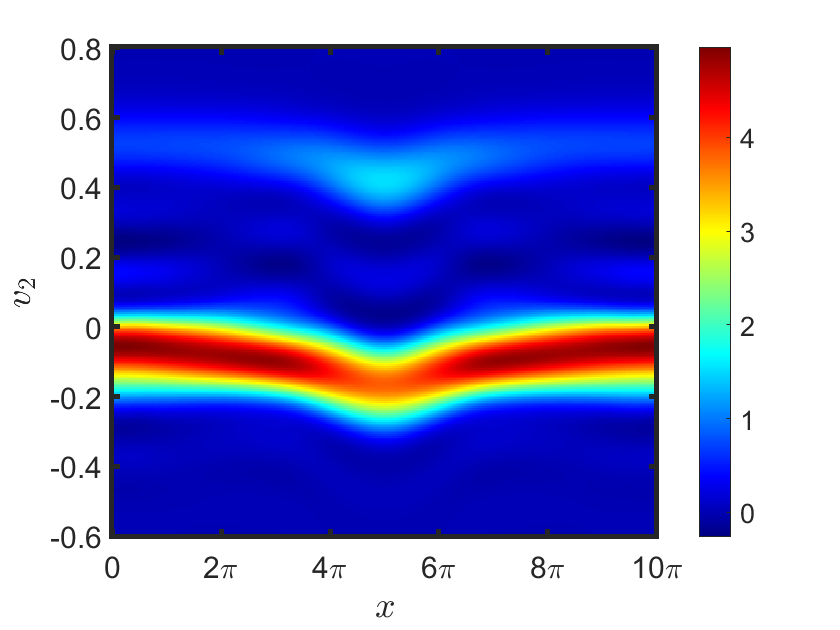}
		\label{fig:weibel_sol_f_v2x_t70}
	}
	\caption{Weibel instability in Sec. \ref{sec:weibel}. The marginal distribution function $g(t, x, v_2)$ at $t = 0, 30, 50$ and $70$. }
	\label{fig:weibel_sol_dis}
\end{figure}

\begin{figure}[!htbp]
	\centering
	\subfigure[$E_1$, moment]{
		\includegraphics[width=0.45\linewidth]{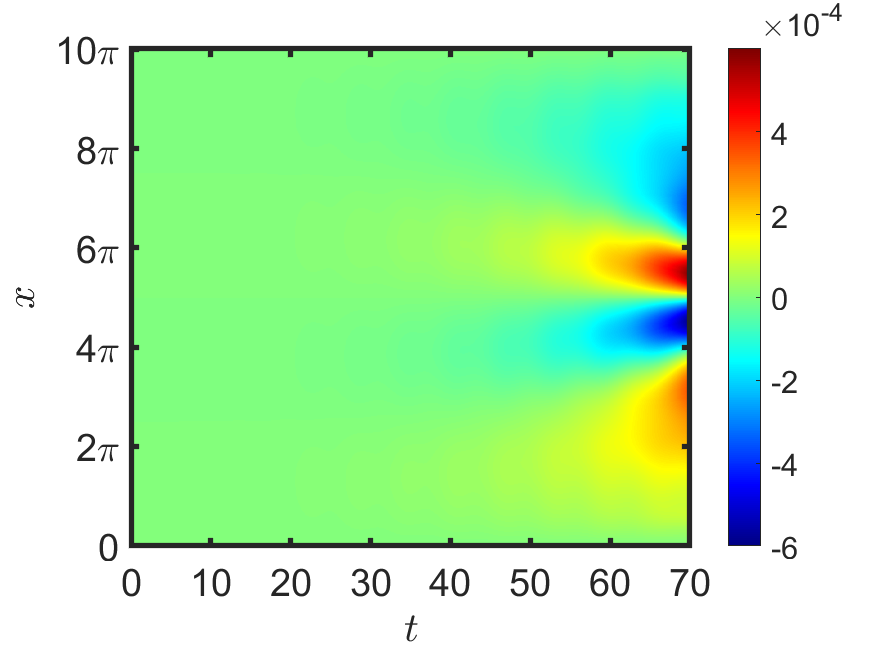}
		\label{fig:weibel_sol_E1}
	} \quad
	\subfigure[$E_1$, DVM]{
		\includegraphics[width=0.45\linewidth]{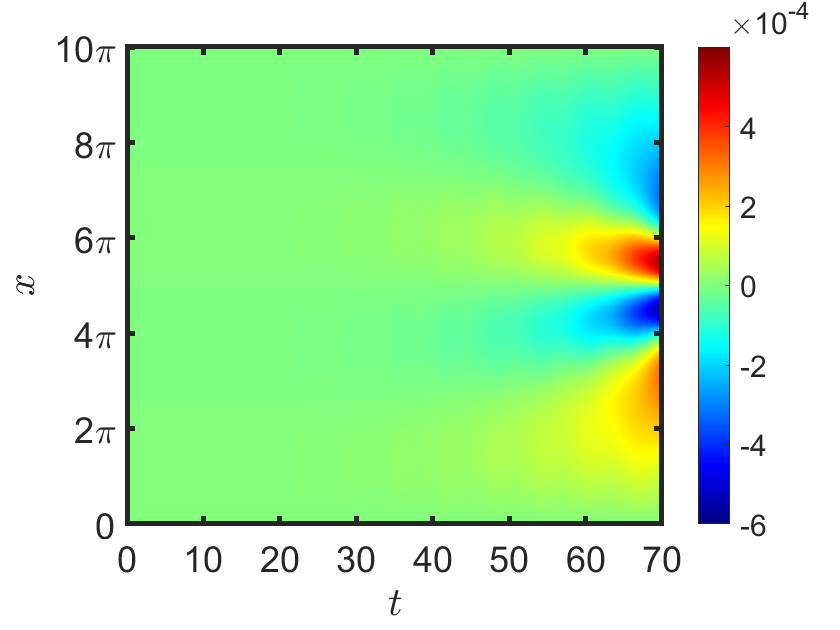}
		\label{fig:weibel_sol_E1_DVM}
	}
	\subfigure[$E_2$, moment]{
		\includegraphics[width=0.45\linewidth]{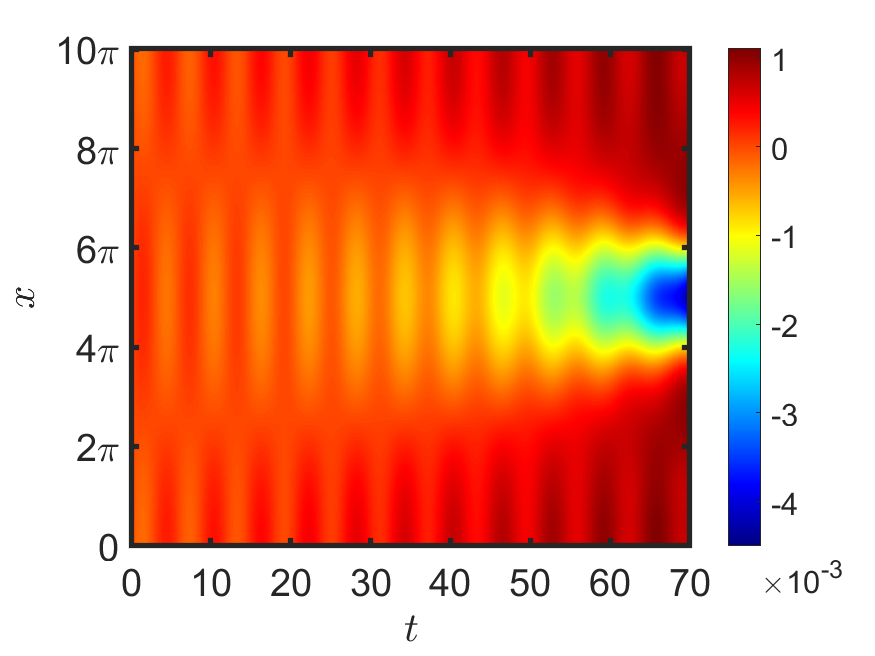}
		\label{fig:weibel_sol_E2}
	} \quad
		\subfigure[$E_2$, DVM]{
		\includegraphics[width=0.45\linewidth]{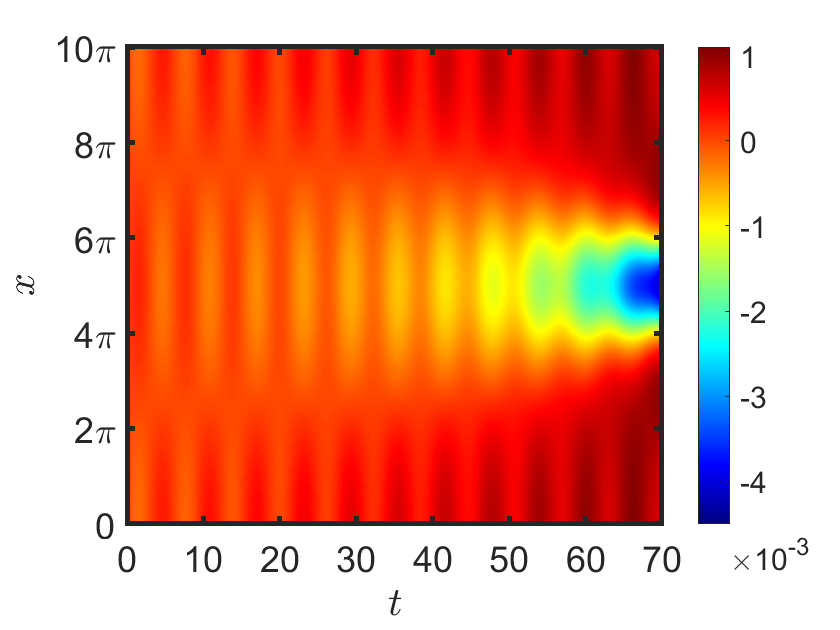}
		\label{fig:weibel_sol_E2_DVM}
	} 
	
	\subfigure[$B_3$, moment]{
		\includegraphics[width=0.45\linewidth]{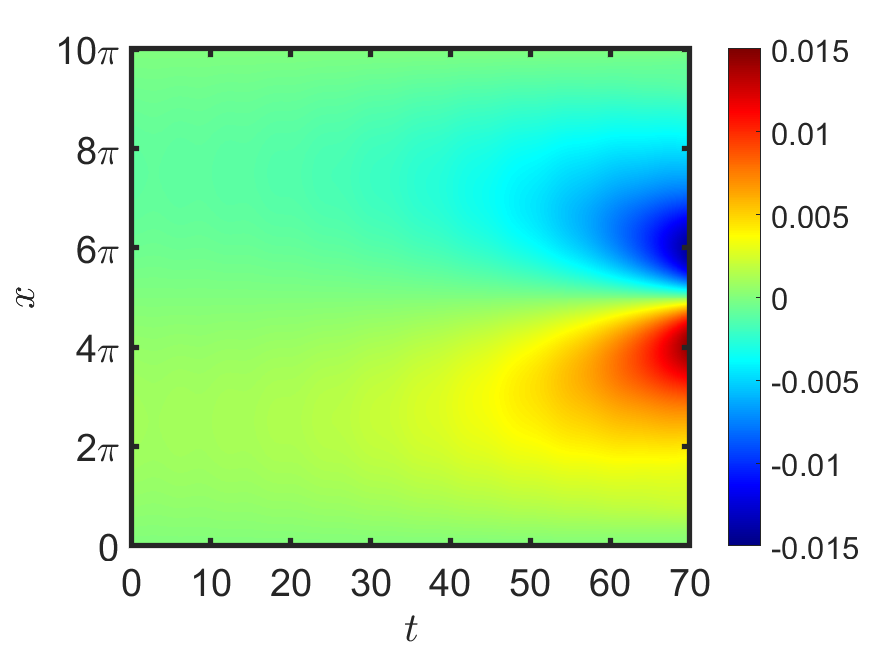}
		\label{fig:weibel_sol_B3}
	}\quad
	\subfigure[$B_3$, DVM]{
		\includegraphics[width=0.45\linewidth]{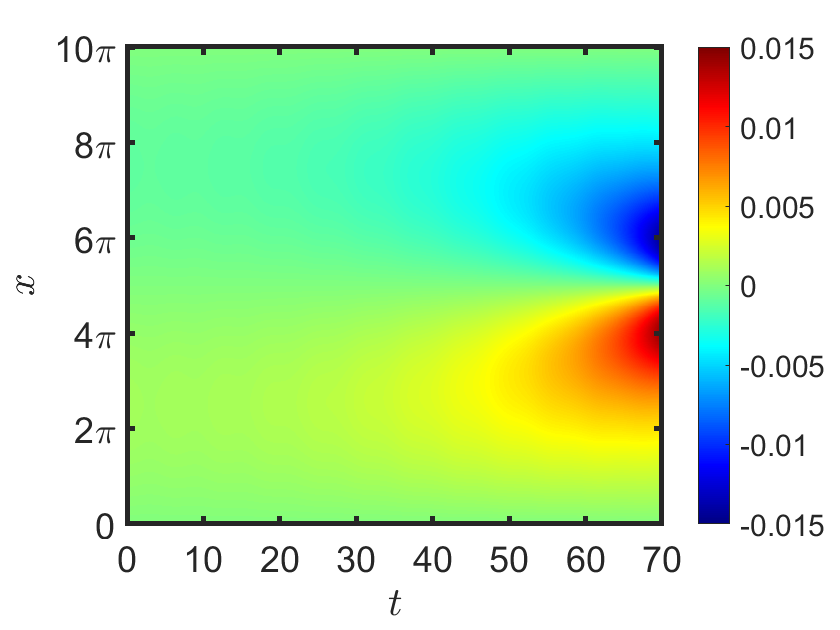}
		\label{fig:weibel_sol_B3_DVM}
	}
	\caption{Weibel instability in Sec. \ref{sec:weibel}.
	 Time evolution electric fields $E_1$ (top) and $E_2$ (middle), and magnetic field $B_3$ (bottom), obtained by the moment method (left) and the DVM (right). 
	 }
	\label{fig:weibel_EB}
\end{figure}


Fig. \ref{fig:weibel_sol} shows the time evolution of kinetic, magnetic, and electric energies. Two components of the electric energy are also plotted. It can be found that 
after the fast transient, both the magnetic and electric energy are increasing with oscillations. It can be observed that the electric energy $E_1$ in the $x$-direction is quite small compared with $E_2$ in the $y$-direction, which is unanimous with the setting of the initial condition. Fig. \ref{fig:weibel_sol_energy_N} shows the numerical solutions of $\mE_{K,h}^n, \mE_{E,h}^h, \mE_{B, h}^n$ and $\mE_{E_1,h}^n$ with different spatial sizes $N = 400, 800, 1000$ and $2000$. Here we do not show $\mE_{E_2, h}^n$, since $\mE_{E_1, h}^n$ is very small compared with $\mE_{E_2, h}^n$, thus $\mE_{E_2, h}^n$ almost equals $\mE_{E,h}^h$. For different spatial sizes, the numerical solutions are on top of each other, which means that with $N = 400$, the numerical solution can resolve the evolution of each energy well. 
In Fig. \ref{fig:weibel_sol_energy}, the reference solution by the DVM is also plotted and the numerical solution is consistent with the reference solution.

The marginal distribution function $g(t, x, v_2)$ defined as
\begin{equation}
  \label{eq:MDF2}
  g(t, x, v_2) = \int_{\bbR} f(t, x, \bv) \dd v_1.
\end{equation}
at $t = 0, 30, 50$ and $70$ is plotted in Fig. \ref{fig:weibel_sol_dis}. At the initial moment, there are a high peak and a low peak, and the oscillations appear gradually. At $t = 70$, there exists an obvious sink in the middle of the distribution function. The time evolution of the electromagnetic field compared to the reference solution by the DVM is shown in Fig. \ref{fig:weibel_EB}, where the magnitude of $E_1$ is the smallest, which is consistent with the numerical result in Fig. \ref{fig:weibel_sol}. We clearly observe the the oscillations in $E_2$, the periodic structure in $E_1$ and $B_3$.


\subsection{Orszag-Tang vortex}
\label{sec:Orszag-Tang}
In this section, we consider the Orszag-Tang vortex problem, which is a classic example in magnetohydrodynamics (MHD) \cite{parashar2010orszag}. It is an example of the interaction between large-scale fluid behavior and small-scale dissipation processes involving dynamic physics, which still interests a lot of research nowadays. 
The Orszag-Tang vortex problem describes the time evolution of ions and fluid electrons, and its initial condition rapidly leads to broadband turbulence. Here, the Orszag-Tang vortex problem is described using the multi-species VM system in the 2D3V setting. We study this problem with the energy-preserving moment method. We refer readers to \cite{cheng_numerical_2015, cheng_energy-conserving_2015} and the references therein for energy-conserving schemes of two-species VA systems. We first introduce the multi-species VM system. 

\paragraph{Multi-species VM system}
In a collisionless magnetized plasma, the normalized time evolution equation of the VM model with $s$ species has the form below. The governing equation for the $k$-th species is
\begin{equation} 
\label{eq:multi_particle}
	\dfrac{\partial f^k}{\partial t}+\nabla_{\bm{x}} \cdot (\bm{v}f^k)+ \frac{q^k}{m^k}\frac{\omega_{ce}}{\omega_{pe}}(\bm{E}+\bm{v}\times\bm{B})\cdot\nabla_{\bm{v}}f^k=0, \qquad k = 1, 2, \cdots, s. 
\end{equation}
Similarly, the normalized Maxwell's equations describing the electromagnetic field have the form 
\begin{equation}
\label{eq:mul_Maxwell}
		\left\{
		\begin{aligned}
			&\frac{\partial\bm{E}}{\partial t}-\nabla_{\bm{x}}\times \bm{B}=-\frac{\omega_{\rm pe}}{\omega_{\rm ce}}\bm{J},\\
			&\frac{\partial \bm{B}}{\partial t}+\nabla_{\bm{x}}\times\bm{E}=0,
		\end{aligned}
		\right.
\end{equation}
with Gauss's law as 
\begin{equation}
\label{eq:multi_gauss_law}
	\nabla_{\bm{x}}\cdot\bm{E}=\frac{\omega_{\rm pe}}{\omega_{\rm ce}}\rho_{\rm free},\qquad\nabla_{\bm{x}}\cdot\bm{B}=0.
\end{equation}
Here, $q^k$ and $m^k$ are the normalized charge and mass of the $k$th species of particles. $\omega_{\rm ce}$ and $\omega_{\rm pe}$ are the 
electron cyclotron frequency and the electron plasma frequency. In the numerical simulation, we pay attention to the ratio $\omega_{\rm ce} / \omega_{\rm pe}$ instead of their individual values. Moreover, the mass density $\rho_{m}(t, \bx)$ and electric current $J(t, \bx)$ for the multi-species VM model are defined as 
\begin{equation}
\begin{aligned}
\label{eq:mul_rho_J}
	&\rho_{\text{m}}(t,\bm{x}) = \sum_{k=1}^s \rho_{m}^k = \sum_{k=1}^s m^k\rho^k = \sum_{k=1}^s m^k\int_{\mathbb{R}^3}f^k(t,\bm{x},\bm{v})\dd\bm{v},
	\\
	&\bm{J}(t,\bm{x})=\sum_{k=1}^s \bm{J}^k =\sum_{k=1}^s q^k \rho^k\bm{u}^k =  \sum_{k=1}^sq^k\int_{\mathbb{R}^3}\bm{v}f^k(t,\bm{x},\bm{v})\dd\bm{v},
	\end{aligned}
\end{equation}
where $\rho_m^k$  and $\rho^k$ are the mass and number density of the $k$-th species of particles.
The momentum and temperature of the $k$-th particle are defined as 
\begin{equation}
  \label{eq:mul_u_T}
  \rho_m^k\bu^k = m^k \int_{\bbR^3} \bv f^k \dd \bv, \qquad 
  \rho^k |\bu^k|^2 + 3 \rho^k \mT^k = \int_{\bbR^3} |\bv|^2 f^k \dd \bv. 
\end{equation}
In this multi-species model, the discrete total energy, electric energy, magnetic energy, and kinetic energy of the system at time $t^n$ are defined as
\begin{equation}
  \label{eq:multi_energy}
 	\mathcal{E}^n_{{\rm total}, h}=\sum_{k=1}^s\mathcal{E}^{n, k}_{K, h} +\mathcal{E}^n_{E, h} + \mathcal{E}^n_{B, h},
\end{equation} 
with 
\begin{equation}
\label{eq:multi_E_B_K}
\begin{aligned}
 	& \mathcal{E}^n_{E, h} =\frac{1}{2}\left(\frac{\omega_{ce}}{\omega_{pe}}\right)^2\sum_{\bm{j}\in\bm{J}}\Delta V |\bE^n_{\bj}|^2, \qquad 
 	\mathcal{E}^n_{B, h} =\frac{1}{2}\left(\frac{\omega_{ce}}{\omega_{pe}}\right)^2\sum_{\bm{j}\in\bm{J}} \Delta V \bB^{n+1/2}_{\bj} \cdot \bB^{n-1/2}_{\bj} , \\
 &	\mathcal{E}^{n, k}_{K, h} =\frac{m^k}{2}\sum_{\bm{j}\in\bm{J}}\Delta V
	(\rho^k_{\bj})^n\big[|\bu^n_{\bj}|^2 + 3(\mathcal{T}^k_{\bj})^n\big].
	\end{aligned}
\end{equation}

\begin{remark}
The normalization of the multi-species VM system \eqref{eq:multi_particle}, \eqref{eq:mul_Maxwell} and \eqref{eq:multi_gauss_law} is similar to \cite{Multi_hermite}. The elementary charge $e$ and the electron mass $m^e$ are treated as standard charge and mass, respectively. $\epsilon_0$ is the permittivity of the vacuum, while $n_0$ is a reference electron density. Then, the normalization is done as below
\begin{equation}
  \label{eq:normalization}
  \hat{t} = \frac{t}{1/\omega_{\rm pe}}, \qquad \hat{\bv} = \frac{\bv}{c}, \qquad \hat{\bx} = \frac{\bx}{L_0}, \qquad \hat{\bB} = \frac{\bB}{B_0}, \qquad \hat{\bE} = \frac{\bE}{c B_0},
\end{equation}
where $\omega_{\rm pe}=\sqrt{\frac{e^2n_0^e}{\epsilon_0 m^e}}$, and $c$ is the speed of light. The inertial length of the electron $L_0 = c / \omega_{\rm pe}$. Finally, we can derive the cyclotron frequency of species $k$ as $\omega_{\rm ck}=eB_0/m^k$.
\end{remark}

For the Orszag-Tang vortex problem, the initial condition is as follows, and we refer readers to \cite{Multi_hermite, UGKWPVM} for more details. 
\begin{align}
\label{eq:ex4_ini}
	f^k(0,\bm{x},\bm{v})=\frac{\rho^k}{(2\pi \mathcal{T}^k)^{3/2}}
	\exp\left(-\frac{|\bm{v}-\bm{U}^k|^2}{2\mathcal{T}^k}\right),\qquad (\bm{x},\bm{v})\in[0,L]^2\times\mathbb{R}^3, \qquad k = i, e,
\end{align}
with 
\begin{equation}
\begin{split}
	&\rho^k = \gamma^2, \qquad \bm{U}^k = [-\bar{B} \bar{v} \sin(y), \bar{B} \bar{v} \sin(x), 0], \qquad k = i, e,\\
	&\bE = \bz, \qquad \bB = [0, \bar{B} \sin(2x), 0], \qquad \mT^i = 0.024, \qquad \mT^e = 0.6,
\end{split}
\end{equation}
where 
\begin{equation}
  \label{eq:ex4_ini_value}
  	\frac{m^i}{m^e}=25,\qquad\frac{\omega_{\rm pe}}{\omega_{\rm ce}} = 1, \qquad \gamma = 5/3, \qquad L = 2 \pi, \qquad \bar{B} = 0.5, \qquad \bar{v} = 0.5. 
\end{equation}
Moreover, the pressure $P^k$ is defined as 
\begin{equation}
  \label{eq:OT_P}
  P^k = \rho_{\text{m}}^k \mT^k=m^k \rho^k \mT^k, \qquad k = i, e.
\end{equation}
\begin{figure}[!ht]
	\centering
	\subfigure[time evolution of the energy]{
		\includegraphics[width=0.45\linewidth]{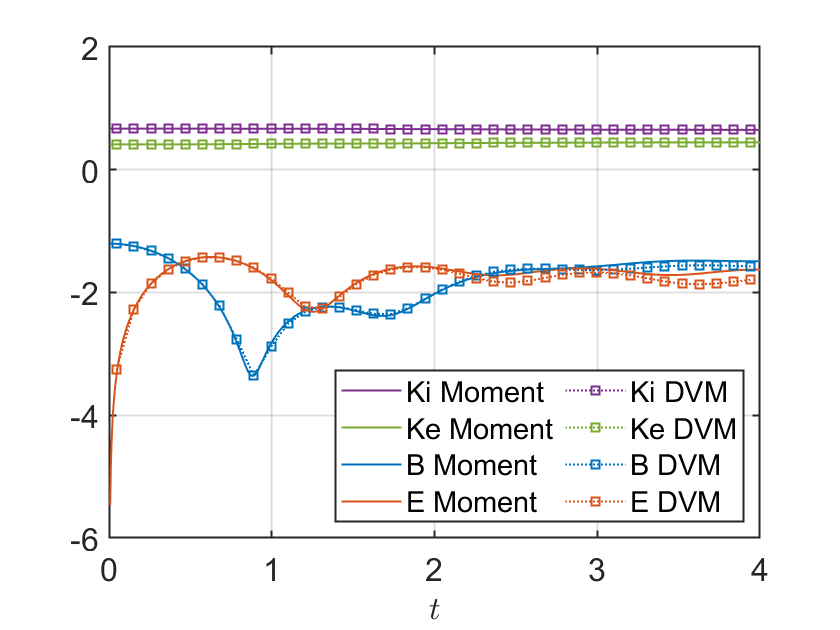}
		\label{fig:OT_energy}
	}
	\subfigure[relative error of the total energy]{
		\includegraphics[width=0.45\linewidth]{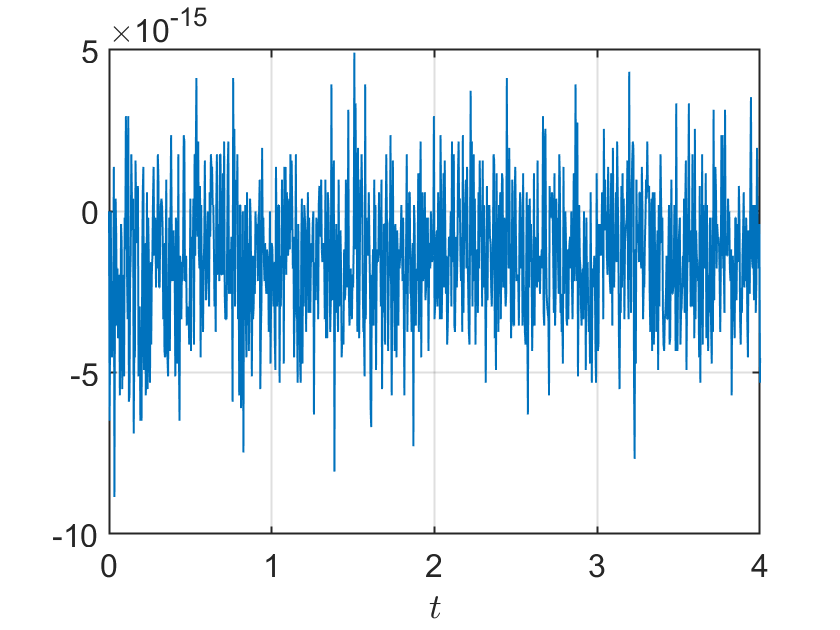}
		\label{fig:OT_var_energy}
	}
	\caption{Orszag-Tang vortex in Sec. \ref{sec:Orszag-Tang}. (a) Time evolution of the electromagnetic field energy and kinetic energy of ions and electrons defined in \eqref{eq:multi_E_B_K}. 
	$y$-axis denotes logarithmic form of the energies $\log_{10}(\mathcal{S}/L^2)$ with $\mathcal{S} = \mE^n_{B,h}, \mE^n_{E,h}, (\mE^k_{K,h})^n, k=i,e$.
	(b) Time evolution of the relative error for the total energy as $\mV(\mE^n_{{\rm total},h})$ defined as \eqref{eq:var_energy}. 
	$y$-axis denotes $\mV(\mE^n_{{\rm total},h})$.}
	\label{fig:Orszag_energy}
\end{figure}

For this problem in 2D3V setting, the numerical simulations are quite expensive and thus are performed with a relatively coarser mesh $N_x=N_y=100$ in the spatial domain, and with a larger CFL as $\text{CFL}=0.4$. The truncation order of the moment method is $M = 30$.
For the MHD problem, people are more interested in the multiples of Alfv\'{e}n time defined as $t_{\text{Alfv\'{e}n}}=L_0/\bar{v}$, where $L_0$ equals $1$ in the normalized system \eqref{eq:multi_particle}, see e.g. \cite{kigure2010generation} for more details. In this test, the final simulation time is set as $t = 2t_{\text{Alfv\'{e}n}}=4$. The time evolution of the electric energy $\mathcal{E}^n_{E, h}$, magnetic energy $\mathcal{E}^n_{B, h}$, and kinetic energy $\mE^{n, s}_{K, h}, s= i, e$ are shown in Fig. \ref{fig:OT_energy}, where the reference solution obtained by the DVM is also plotted. 
It can be observed that for the evolution of each energy, the numerical solution fits well with the reference solution. Fig. \ref{fig:OT_var_energy} illustrates the time evolution of the relative error for the total energy. We can find that the energy-conserving moment method can preserve the total energy for this complicated multi-species system in the 2D3V setting. 

\begin{figure}[!htbp]
	\centering
	\includegraphics[width=1.0\linewidth]{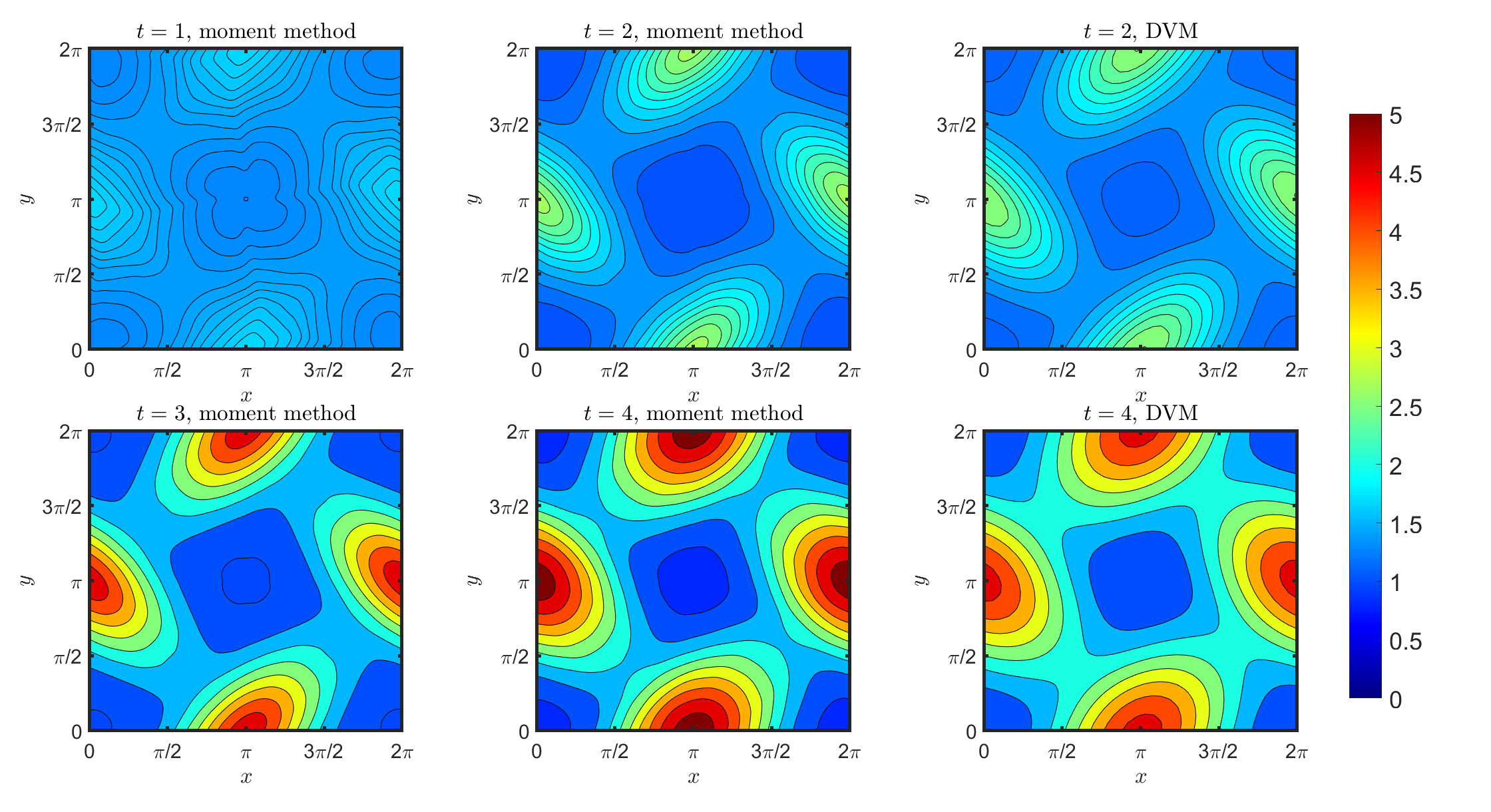}
	\caption{Orszag-Tang vortex in Sec. \ref{sec:Orszag-Tang}. Contour plots of the pressure for the ion $P^i=\rho_m^i\mathcal{T}^i$ at $t=1, 2, 3$ and $4$. The left and middle columns are numerical solutions by the moment method and the right column is the reference solution by the DVM.}
	\label{fig:OT_P}
\end{figure}

Fig. \ref{fig:OT_P} shows the pressure distribution diagram of the ions at different time, and the reference pressure at $t = 2$ and $4$ is also plotted. At first, the pressure is uniform and small vortexes start to form gradually. At $t = 2t_{\text{Alfv\'{e}n}} = 4$, there are nearly four vortexes, and the numerical results are consistent with the reference solution. In the Orszag-Tang vortex problem, people are always interested in the density $\rho_m$ and the current $J_z$. The mass density $\rho_{m}^i$ and the current of the ions $J_z^i = q^i \rho^i u_z^i$ at time $t = 1, 2, 3$ and $4$ are illustrated in Fig. \ref{fig:Orszag_rho} and \ref{fig:Orszag_Jz}, respectively. The evolution of $ \rho^i_{m}$ is similar to that of pressure $P^i$, which is uniform at the beginning, and four vertices form at $t = 4$. For the current, it is also smooth at the beginning, and evolves to several vortexes and oscillations as time goes.

The time evolution of the magnetic field is shown in Fig. \ref{fig:OrszagB}, where the same color interval is utilized for all figures. The background is the total magnetic energy, and the white lines with arrows are the magnetic fields. At the initial time as in Fig. \ref{fig:OrszagB0}, the magnetic field has four peaks, forming four uniform magnetic field bands in the $xy$-plane. Then, the peak value gradually decreases, but the direction of the magnetic field is barely changed in Fig. \ref{fig:OrszagB5}. In Fig. \ref{fig:OrszagB10} and 
\ref{fig:OrszagB15}, the total magnetic energy becomes quite small, and each band is distorted, with two peaks forming. Then, two peaks at the center of the $xy$-plane and four peaks at the four corners appear in Fig. \ref{fig:OrszagB20} and \ref{fig:OrszagB25}. Finally, the magnetic field lines begin to twist in Fig. \ref{fig:OrszagB30}. 
At $t = 4$, the double peaks in the center merge to one peak, and the five peaks are growing larger and larger in Fig. \ref{fig:OrszagB40}. 
From Fig. \ref{fig:OrszagB}, we can clearly see the evolution of the magnetic field, and more complicated phenomenon may appear with time going, which will be left for future work.

 \begin{figure}[!htbp]
 	\centering
 	\includegraphics[width=1.0\linewidth]{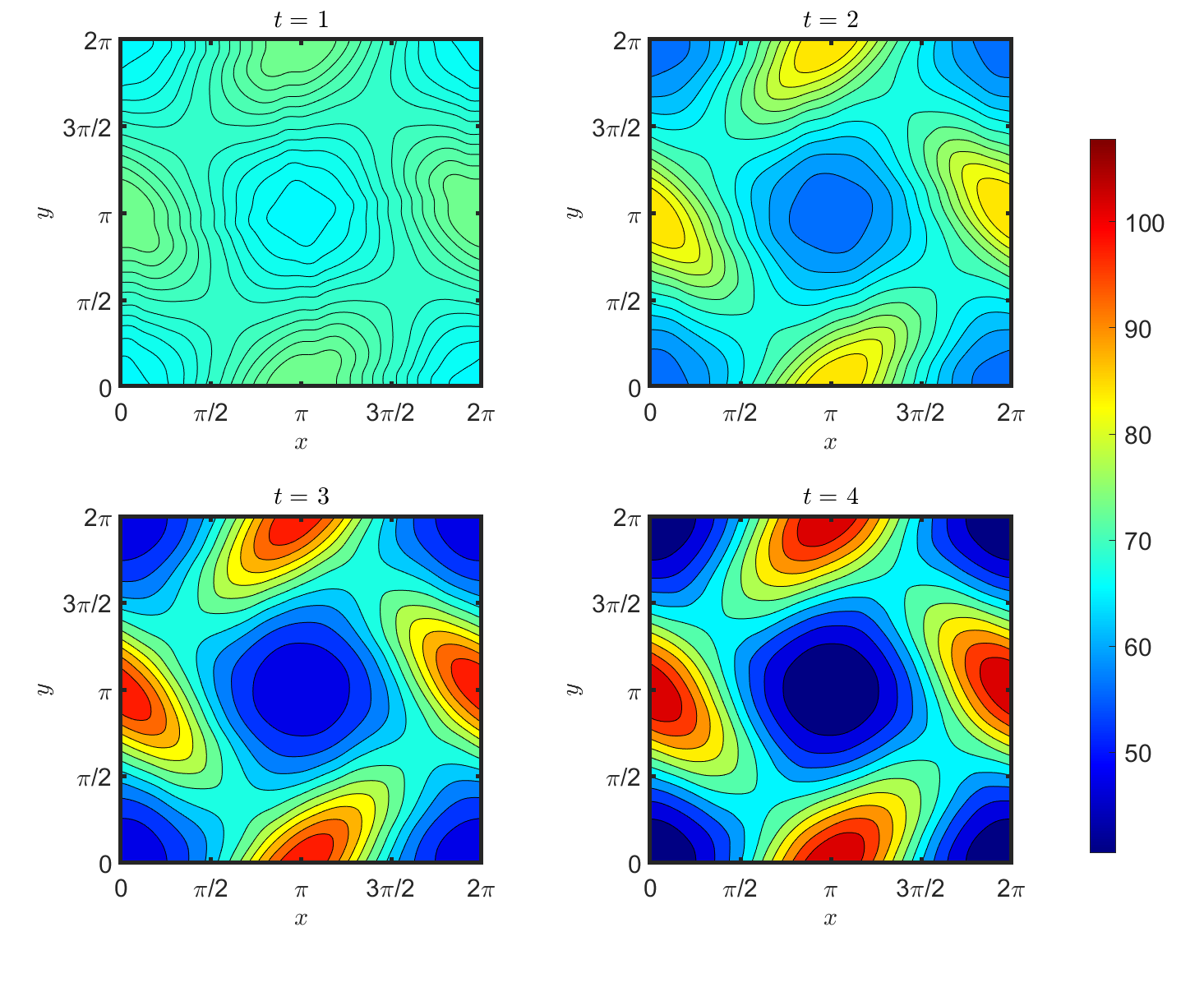}
 	\caption{Orszag-Tang vortex in Sec. \ref{sec:Orszag-Tang}. Contour plots of the mass density of the ions $\rho_{m}^i$ at $t = 1, 2, 3$ and $4$. }
 	\label{fig:Orszag_rho}
 \end{figure}
 
 \begin{figure}[!htbp]
 	\centering
 	\includegraphics[width=1.0\linewidth]{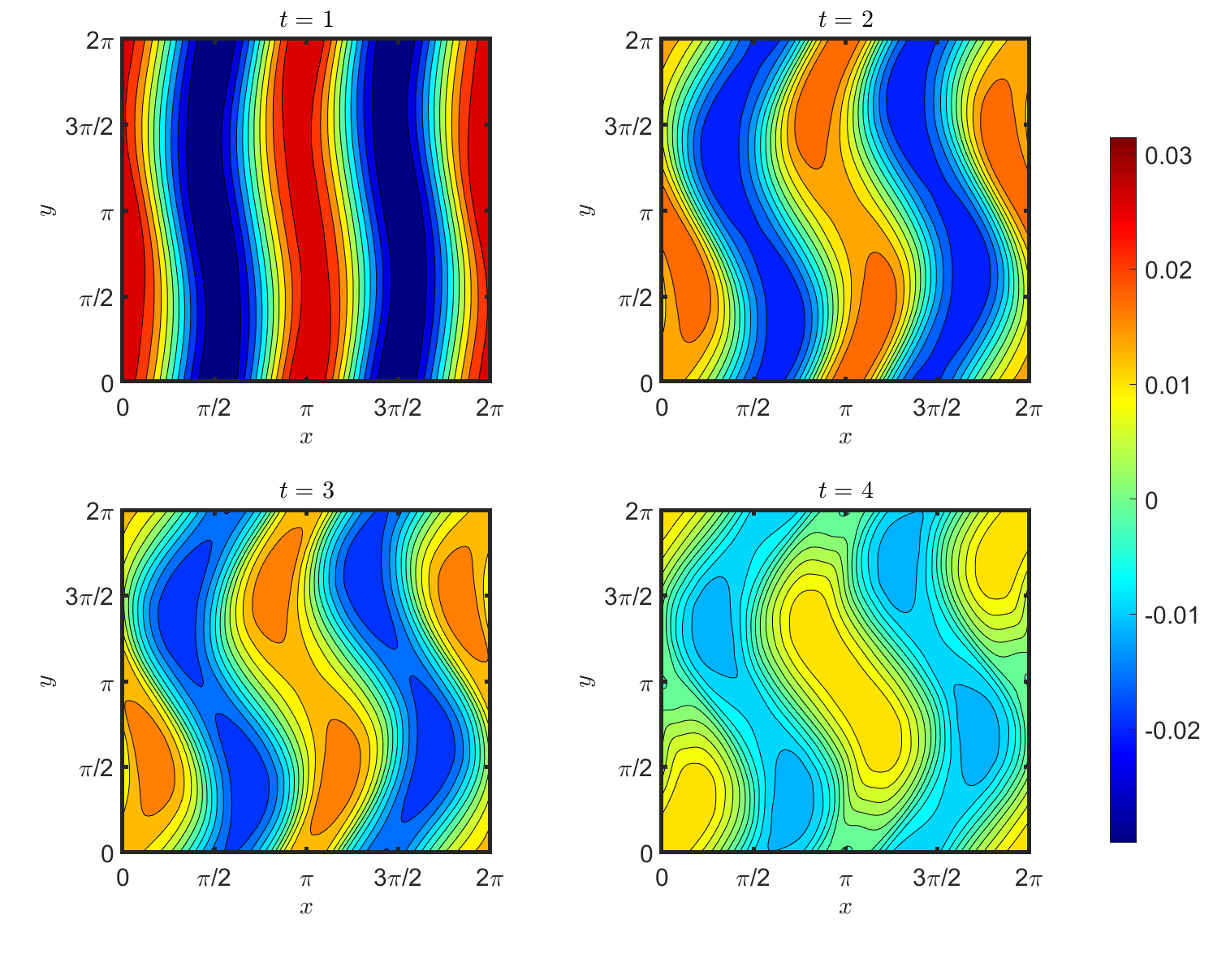}
 	\caption{Orszag-Tang vortex in Sec. \ref{sec:Orszag-Tang}. Contour plots of the current density of ions $J_z^i$ at $t = 1, 2, 3$ and $4$. }
 	\label{fig:Orszag_Jz} 
\end{figure}
 
 \begin{figure}[!ht]
	\centering
	\subfigure[$t=0$]{
	\includegraphics[width=0.32\linewidth]{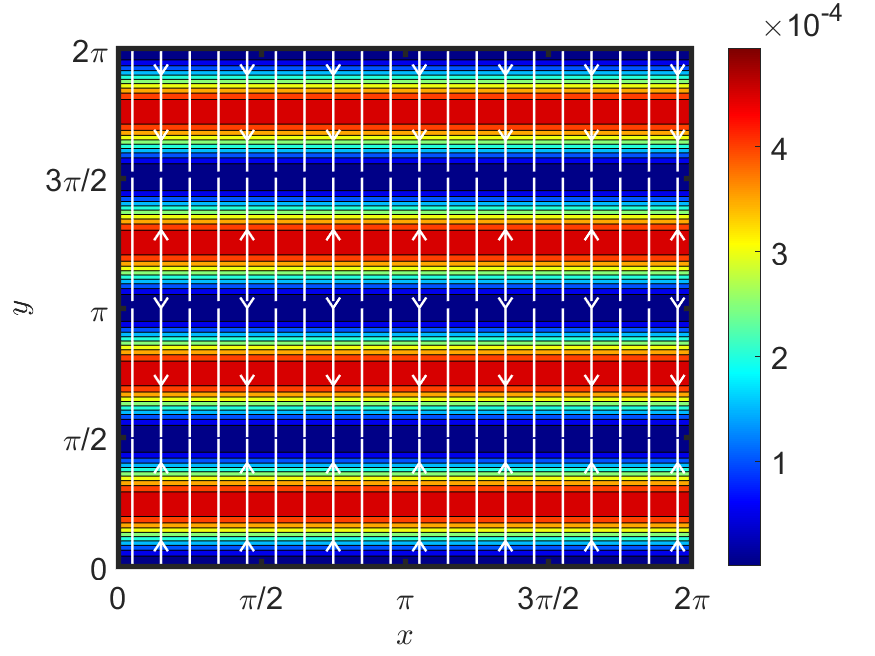}
	\label{fig:OrszagB0}
	}\hspace{-0.3cm}
	\subfigure[$t=0.5$]{
	\includegraphics[width=0.32\linewidth]{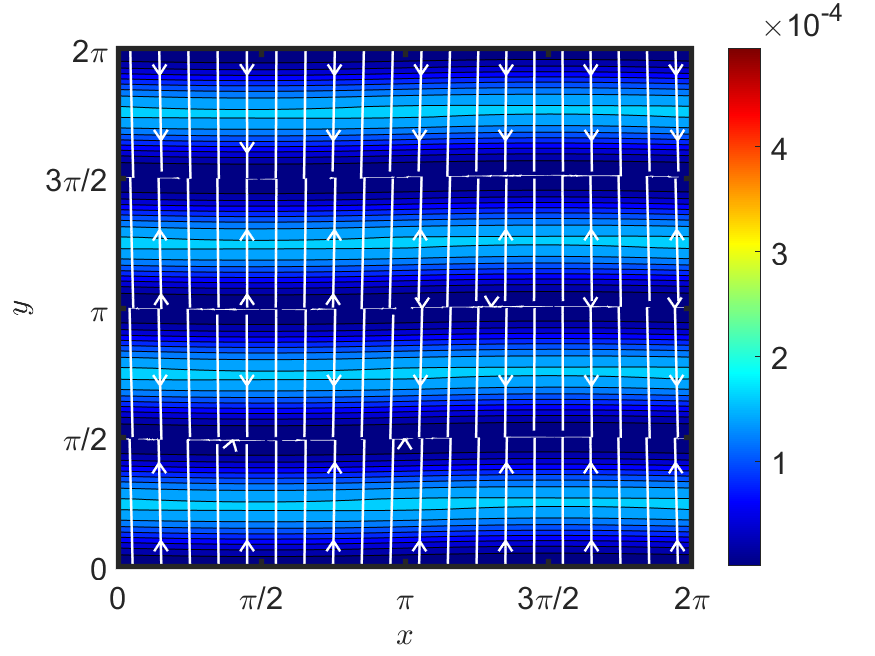}
		\label{fig:OrszagB5}
	}\hspace{-0.3cm}
	\subfigure[$t=1$]{
	\includegraphics[width=0.32\linewidth]{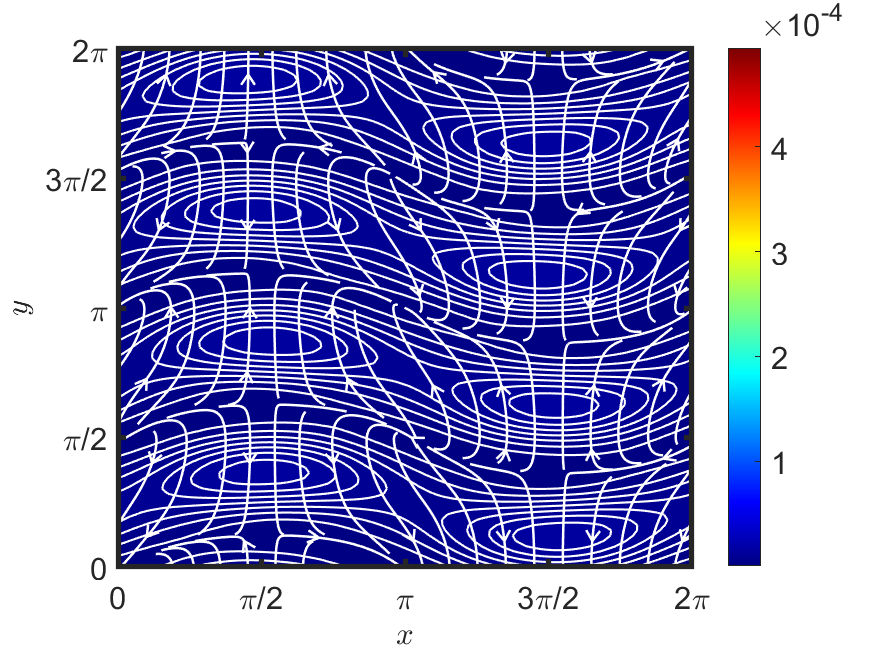}
		\label{fig:OrszagB10}
	}\\
	\centering
	\subfigure[$t=1.5$]{
	\includegraphics[width=0.32\linewidth]{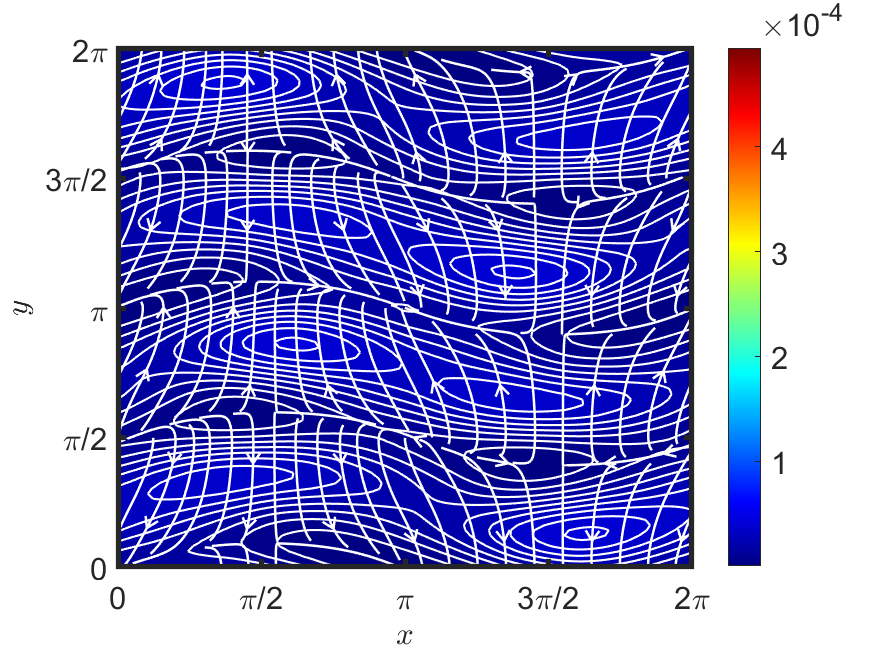}
	\label{fig:OrszagB15}
	}\hspace{-0.3cm}
	\subfigure[$t=2$]{
	\includegraphics[width=0.32\linewidth]{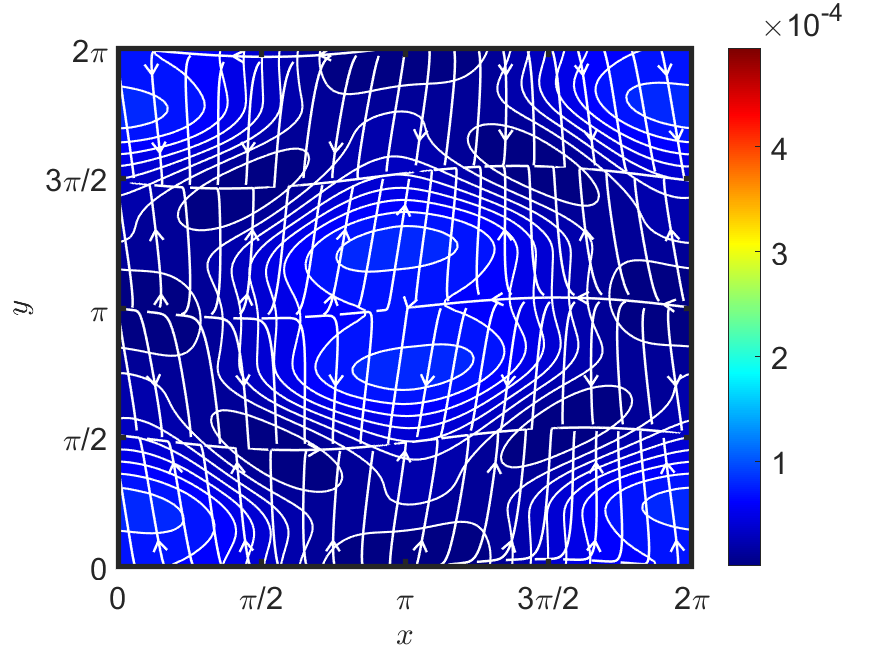}
	\label{fig:OrszagB20}
	}\hspace{-0.3cm}
	\subfigure[$t=2.5$]{
	\includegraphics[width=0.32\linewidth]{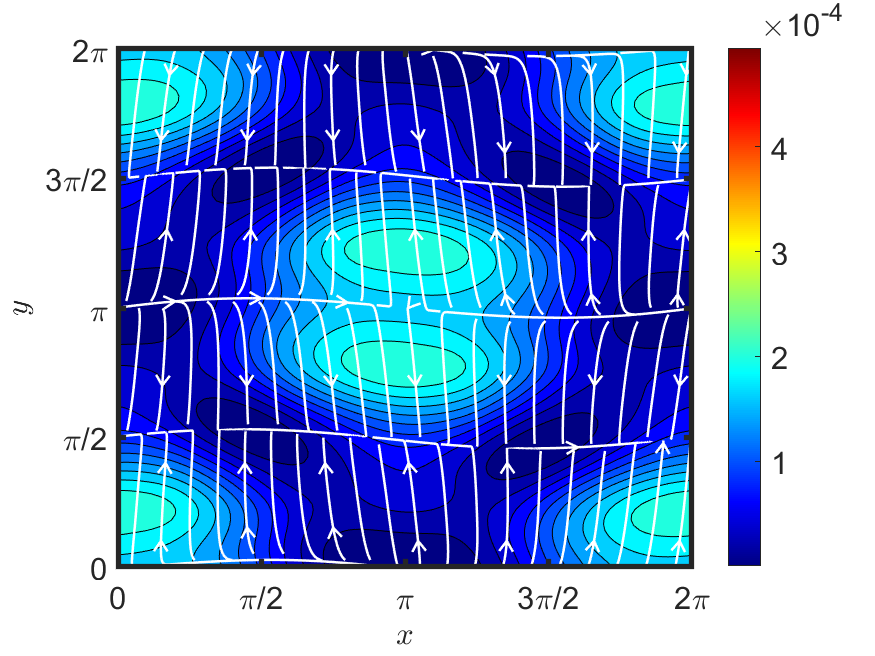}
	\label{fig:OrszagB25}
	}\\
	\centering
	\subfigure[$t=3$]{
	\includegraphics[width=0.32\linewidth]{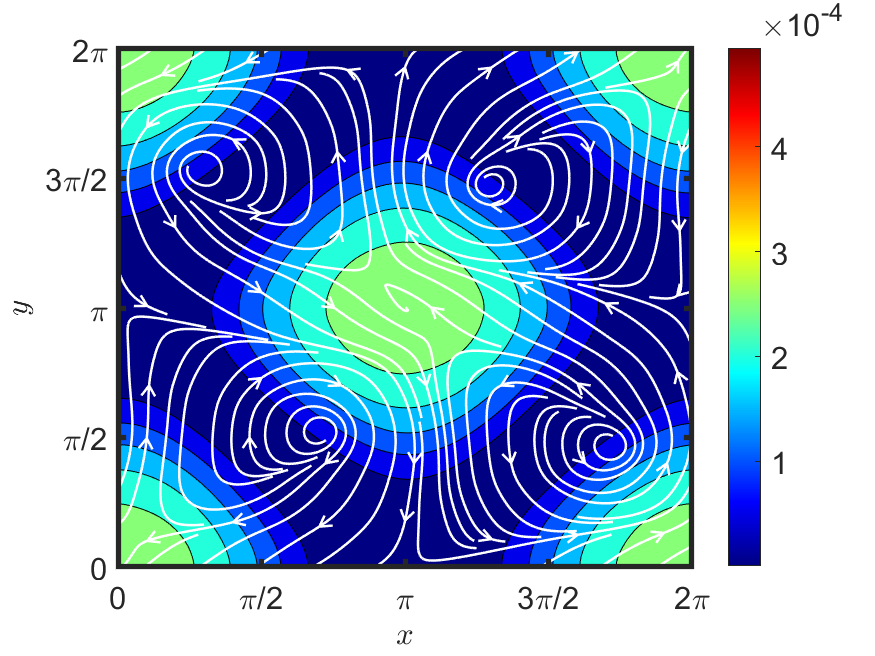}
	\label{fig:OrszagB30}
	} \hspace{-0.3cm}
	\subfigure[$t=3.5$]{
	\includegraphics[width=0.32\linewidth]{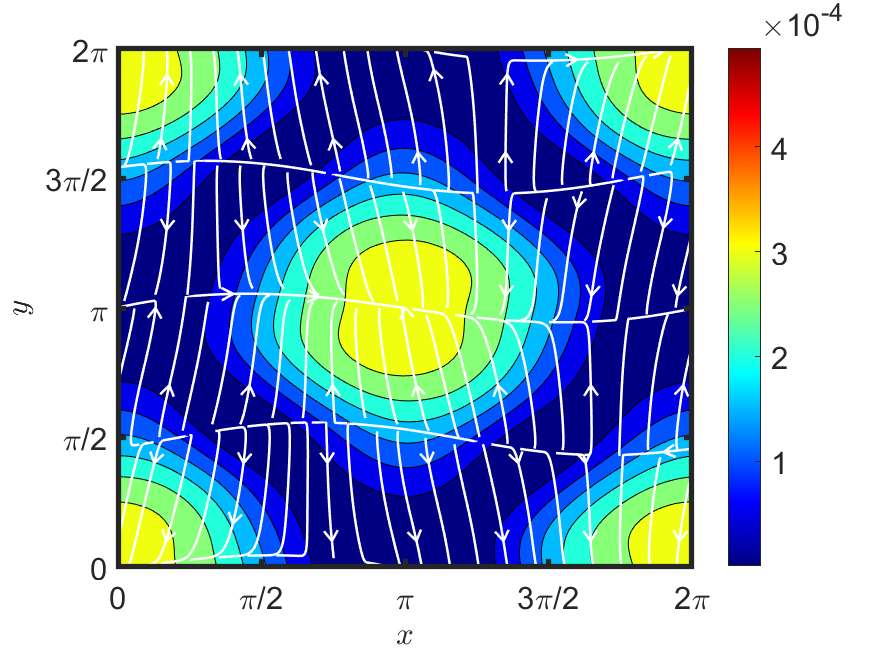}
	\label{fig:OrszagB35}
	}\hspace{-0.3cm}
	\subfigure[$t=4$]{
	\includegraphics[width=0.32\linewidth]{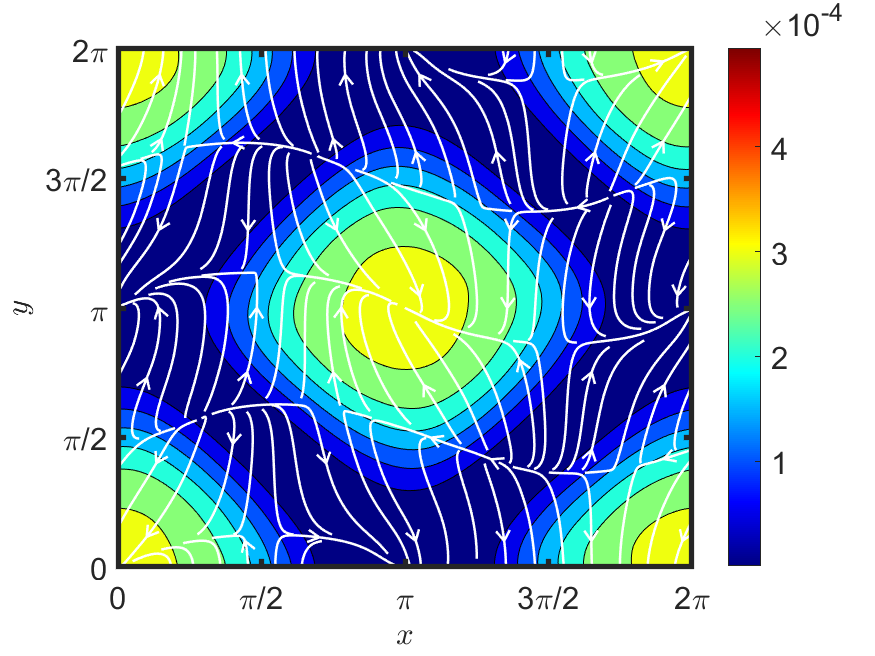}
	\label{fig:OrszagB40}
	}\\
	\caption{Orszag-Tang vortex in Sec. \ref{sec:Orszag-Tang}. Time evolution of the magnetic field. Here, the background is the total magnetic energy, and the white line is the magnetic field.}
	\label{fig:OrszagB}
\end{figure}


\section{Conclusion}
\label{sec:conclusion}
In this paper, the globally hyperbolic moment system is derived for the VM system under the framework of the Hermite spectral method. With the expansion center chosen as the local macroscopic velocity and the scaling factor as the temperature, the movement of the particles led by the Lorentz force can be expressed with the linear combination of the moment coefficients. Therefore, only quite a few degrees of freedom are needed to describe the effect of the Lorentz force. An energy-preserving numerical scheme for the moment system, where only a small linear equation system needs to be solved, is proposed. The numerical results of Landau damping, two-stream instability, Weibel instability, and Orszag-Tang vortex problem are shown to verify the efficiency and energy-preserving ability of this numerical scheme. 

It is illustrated that the globally hyperbolic moment system is a promising alternative to model the VM system. The numerical scheme with large time step and more applications will be studied in the future.

\section*{Acknowledgements}
 Xinghui Zhong is partially supported by the National Natural Science Foundation of China (Grant No. 11871428). Yanli Wang is partially supported by the National Natural Science Foundation of China (Grant No. 12171026, U1930402 and 12031013).
 The authors would like to thank Prof. Ruo Li from Peking University and Prof. Zhenning Cai from National University of Singapore for
their valuable suggestions. 

\bibliographystyle{plain}
\bibliography{Ref.bib}
\end{document}